\journal{Elsevier}
\newdefinition{definition}{Definition}[section]
\newtheorem{theorem}{Theorem}[section]
\newtheorem{lemma}[theorem]{Lemma}
\newtheorem{proposition}[theorem]{Proposition}
\newtheorem{remark}{Remark}[section]
\newproof{proof}{Proof}
\numberwithin{equation}{section}
\newcommand{\mathbd}[1]{\boldsymbol{#1}}
\newcommand{\divv}{\mathrm{d}}
\newcommand{\diff}{\,\divv}
\renewcommand{\Im}{\operatorname{Im}}
\renewcommand{\pi}{\piup}
\DeclareMathOperator{\Order}{O}
\DeclareMathOperator{\rme}{e}
\DeclareMathOperator{\Si}{Si}
\newcommand{\domD}{\mathscr{D}}
\newcommand{\MC}{\mathbf{M}}
\newcommand{\Hinf}{\mathbf{H}^{\infty}}
\newcommand{\Ident}{\mathcal{I}}
\newcommand{\textRZ}{\text{\tiny{\rm{RZ}}}}
\newcommand{\textSE}{\text{\tiny{\rm{SE}}}}
\newcommand{\textDE}{\text{\tiny{\rm{DE}}}}
\newcommand{\Vol}{\mathcal{V}}
\newcommand{\VolSEn}{\mathcal{V}_N^{\textSE}}
\newcommand{\VolDEn}{\mathcal{V}_N^{\textDE}}
\newcommand{\SEt}{\psi^{\textSE}}
\newcommand{\DEt}{\psi^{\textDE}}
\newcommand{\SEtInv}{\phi^{\textSE}}
\newcommand{\DEtInv}{\phi^{\textDE}}
\newcommand{\SEtDiv}{\{\SEt\}'}
\newcommand{\DEtDiv}{\{\DEt\}'}
\newcommand{\ProjSE}{\mathcal{P}_N^{\textSE}}
\newcommand{\ProjRZ}{\mathcal{P}_N^{\textRZ}}
\newcommand{\ProjDE}{\mathcal{P}_N^{\textDE}}
\newcommand{\vRZn}{w_N^{\textRZ}}
\newcommand{\vSEn}{v_N^{\textSE}}
\newcommand{\vDEn}{v_N^{\textDE}}
\newcommand{\uSEn}{u_N^{\textSE}}
\newcommand{\uDEn}{u_N^{\textDE}}
\newcommand{\tSE}{t^{\textSE}}
\newcommand{\tDE}{t^{\textDE}}
\newcommand{\kSEn}{V_{n}^{\textSE}}
\newcommand{\gSEn}{\mathbd{g}_n^{\textSE}}
\newcommand{\kDEn}{V_{n}^{\textDE}}
\newcommand{\gDEn}{\mathbd{g}_n^{\textDE}}
\begin{document}

\begin{frontmatter}

\title{Relation between two Sinc-collocation methods for Volterra integral equations of the second kind and further improvement~\tnoteref{mytitlenote}}
\tnotetext[mytitlenote]{This work was partially supported by JSPS
Grant-in-Aid for Scientific Research (C) JP23K03218.}

%% Group authors per affiliation:
%\author{Tomoaki Okayama\fnref{myfootnote}}
\author[HCU]{Tomoaki Okayama\corref{cor1}}
\cortext[cor1]{Corresponding author}
\affiliation[HCU]{organization={Hiroshima City University},
addressline={3-4-1, Ozuka-higashi, Asaminami-ku},
city={Hiroshima},
postcode={731-3194},
country={Japan}}
\ead{okayama@hiroshima-cu.ac.jp}

\begin{abstract}
Two different Sinc-collocation methods
for Volterra integral equations of the second kind
have been independently proposed
by Stenger and Rashidinia--Zarebnia.
However, their relation remains unexplored.
This study theoretically examines the solutions of these two methods,
and reveals that they are not generally equivalent,
despite coinciding at the collocation points.
Strictly speaking, Stenger's method
assumes that the kernel of the integral
is a function of a single variable,
but this study theoretically justifies the use of
his method in general cases, i.e., the kernel is
a function of two variables.
Then, this study rigorously proves that
both methods can attain the same, root-exponential convergence.
%which corresponds with the numerical observations in the previous study.
In addition to the contribution,
this study improves Stenger's method to attain significantly higher,
almost exponential convergence.
Numerical examples supporting the theoretical results are also provided.
\end{abstract}

\begin{keyword}
Sinc numerical method\sep tanh transformation\sep
double-exponential transformation\sep collocation method\sep
Nystr\"{o}m method
\MSC[2010] 65R20
\end{keyword}

\end{frontmatter}

\section{Introduction and summary}
\label{sec:introduction}

This paper is concerned with numerical solutions via
Sinc numerical methods~\cite{stenger00:_summar,sugihara04:_recen}
for Volterra integral equations of the second kind of the form
\begin{equation}
u(t) - \int_a^t k(t,s) u(s)\diff s
= g(t),\quad a\leq t\leq b.
\label{eq:Volterra-int}
\end{equation}
Here, $k(t,s)$ and $g(t)$ are given continuous functions,
and $u(t)$ is the solution to be determined.
The equations are often expressed symbolically
as $(\Ident -\Vol) u = g$ by introducing
Volterra integral operator $\Vol:C([a,b])\to C([a,b])$ as
\[
 \Vol[f](t)=\int_a^t k(t,s)f(s)\diff s.
\]
One of powerful tools in the Sinc numerical methods,
especially for the target equations~\eqref{eq:Volterra-int},
is the Sinc indefinite integration~\cite{haber93:_two,muhammad03:_doubl}.
This provides an approximation formula for indefinite integral
in the following form
\[
 \int_a^t F(s)\diff s \approx
\sum_{j=-N}^N F(s_j) \omega_j(t),
\]
where the weight $\omega_j$ is a function depending on $t$,
whereas
the sampling point $s_j$ is \emph{fixed}, independent of $t$,
even though the interval of the integral $(a,\,t)$ depends on $t$.
This is quite a unique feature,
because not only $\omega_j$ but also $s_j$ should depend on $t$ if
a standard quadrature rule is used for approximating the indefinite integral.
As another beautiful feature,
the Sinc indefinite integration
can attain \emph{exponential} order of convergence,
which significantly exceeds polynomial order of convergence.
Leveraging these features,
Muhammad et al.~\cite{muhammad05:_numer}
considered the Sinc indefinite integration of $\Vol$,
say $\Vol_N$, and numerical solution $u_N$ that satisfies
the following equation
\[
 (\Ident - \Vol_N) u_N = g. 
\]
The procedure to obtain the solution $u_N$
is called the Sinc-Nystr\"{o}m method,
which is described in Section~\ref{sec:nystroem}.
Theoretical analysis has established that the method achieves
a convergence rate of $\Order(\exp(-\sqrt{\pi d N}))$~\cite{okayama13:_theo},
where $d$ indicates the size of the domain in which
the solution $u$ is analytic.

Another numerical solution for~\eqref{eq:Volterra-int}
via Sinc numerical methods
was developed by
Rashidinia and Zarebnia~\cite{rashidinia07:_solut}.
They derived their method following quite a standard collocation procedure
based on the Sinc approximation (a function approximation formula),
which also attains an exponential order of convergence.
Let $\mathcal{P}_N f$ denote the Sinc approximation of $f$.
%Setting their approximate solution $w_N$
%in the expansion form of the Sinc approximation,
%they substituted $w_N$ into~\eqref{eq:Volterra-int}
%with approximating $\Vol w_N$ by $\Vol_N w_N$.
%Then, sampling 
%and $w_N$ be their numerical solution.
Then, as shown in this paper,
the equation to be solved is written symbolically as
\[
 (\Ident - \mathcal{P}_N \Vol_N) w_N = \mathcal{P}_N g .
\]
The procedure to obtain the solution $w_N$
is called the Sinc-collocation method,
which is described in Section~\ref{sec:collocation}.
Although a theoretical error analysis of the method was
given~\cite{zarebnia10:_conv},
its convergence was not strictly proved.
%In view of~\eqref{eq:Sinc-Nystroem}
%and~\eqref{eq:Sinc-collocation},
%easily we find $u_N \neq v_N$.

Yet another numerical solution for~\eqref{eq:Volterra-int}
via Sinc numerical methods
was developed by Stenger~\cite{stenger93:_numer}.
Although his method was introduced more than a decade
before the above methods,
it has received considerably less attention.
This may be because the target equation
of his method is not exactly~\eqref{eq:Volterra-int}.
The interest of his method is in initial value problems
\begin{align*}
 u'(t) &= \tilde{k}(t)u(t) + \tilde{g}(t),\\
 u(a) &= u_a,
\end{align*}
which can be reduced to a form of Volterra
integral equations of the second kind as
\begin{equation}
 u(t) - \int_a^t \tilde{k}(s) u(s)\diff s = g(t),
\label{eq:Volterra-initial-val}
\end{equation}
where $g(t)=u_a + \int_a^t\tilde{g}(s)\diff s$.
Because the kernel here ($\tilde{k}$) is a function of a single variable,
Stenger's method does not appear to cover the general
case as~\eqref{eq:Volterra-int}.
%, i.e.,
%the kernel is a function of two variables.
However, aside from theoretical justification,
it is relatively evident that
his method remains implementable even when
the kernel is a function of two variables.
Its numerical solution, say $v_N$, is determined in the following two steps:
(i) obtain the Sinc-Nystr\"{o}m solution $u_N$,
and (ii) apply the Sinc approximation to $u_N$.
%then the solution is written as $v_N=\mathcal{P}_N u_N$.
The step (i) implies that Stenger's method is based on the
Sinc-Nystr\"{o}m method, but $v_N$ is not equal to $u_N$
because of the step (ii).
The detailed procedure is described in Section~\ref{sec:collocation}
(strictly speaking, it is the first time that
the explicit procedure for the general case~\eqref{eq:Volterra-int}
is presented).
Its convergence has been stated~\cite{stenger93:_numer}
assuming that the kernel is a function of a single variable.
%The assumption on the kernel seems essential
%because it is fully used in his theoretical discussion of the method.
%Contrary to the idea, this paper theoretically justifies the use of
%his method in general cases, i.e., the kernel is
%a function of two variables.

As seen above, three numerical methods have been proposed based on
the Sinc numerical methods: Sinc-Nystr\"{o}m method,
Sinc-collocation method, and Stenger's method.
Therefore, a question may naturally arise:
\emph{what is the difference (or similarity), and which method is the best?}
The first objective of this study is to investigate this question
from both theoretical and practical perspectives.
This study first reveals that Stenger's method
can be regarded as another Sinc-collocation method.
Then, it is shown that
Stenger's method and Rashidinia--Zarebnia's method
coincide at the collocation points, but they are not generally equivalent.
Furthermore,
this study shows that the convergence rate of the two Sinc-collocation methods
is exactly the same: $\Order(\sqrt{N}\exp(-\sqrt{\pi d \alpha N}))$,
where $\alpha$ is the order of H\"{o}lder continuous
with $0<\alpha\leq 1$.
From an implementation perspective, Stenger's method is preferable,
because it is simpler and easier to implement than
the method by Rashidinia and Zarebnia.

Thus, we only have to compare two methods:
Sinc-Nystr\"{o}m method and Stenger's Sinc-collocation method.
Even when $\alpha=1$,
the convergence rate of Stenger's method is slightly lower
than that of the Sinc-Nystr\"{o}m method.
However, numerical experiments indicate that the Sinc-Nystr\"{o}m method
requires much computation time to obtain the same accuracy
as Stenger's method.
This is primarily because the basis functions of the
Sinc-Nystr\"{o}m method include the sine integral (a special function),
which requires a high computational cost.
Based on this finding, we conclude that Stenger's method
is preferable among the three methods described above.

The second objective of this study is to improve Stenger's method.
In the aforementioned three methods, the tanh transformation
\begin{equation}
t = \SEt(x) = \frac{b-a}{2}\tanh\left(\frac{x}{2}\right) + \frac{b+a}{2}
\label{eq:SEt}
\end{equation}
is employed in common to map $\mathbb{R}$ onto the target interval $(a,b)$.
This is because Sinc numerical methods
are originally defined over the entire real axis $\mathbb{R}$.
Therefore, for the finite interval,
a variable transformation such as~\eqref{eq:SEt} is required.
This study aims to improve Stenger's method by replacing the tanh
transformation with
\begin{equation}
t = \DEt(x)
 = \frac{b-a}{2}\tanh\left(\frac{\pi}{2}\sinh x\right) + \frac{b+a}{2},
\label{eq:DEt}
\end{equation}
which is called the double-exponential (DE) transformation.
%Although the transformation was originally proposed
%for numerical integration~\cite{takahasi74:_doubl},
The convergence rates of
various methods via Sinc numerical methods have been improved
by replacing the tanh transformation with
the DE transformation~\cite{mori01:_doubl,sugihara04:_recen,murota25:_doubl}.
Specifically, the convergence rate of the Sinc-Nystr\"{o}m method
was enhanced to $\Order(\log(2 d N)\exp(-\pi d N/\log(2 d N))/N)$
through the replacement~\cite{okayama13:_theo}.
On the basis of the observation,
this study develops a new Sinc-collocation method
combined with the DE transformation.
Furthermore, this study performs theoretical analysis
of the proposed method and shows that its convergence rate
is $\Order(\exp(-\pi d N/\log(2 d N/\alpha)))$,
which significantly exceeds that of Stenger's method.
Although
the rate is slightly lower than that of the Sinc-Nystr\"{o}m method
combined with the DE transformation,
numerical experiments indicate that the Sinc-Nystr\"{o}m method
requires much computation time to obtain the same accuracy
as the proposed method.
This is similarly observed when comparing the Sinc-Nystr\"{o}m and
Sinc-collocation methods combined with the tanh transformation.

The remainder of this paper is organized as follows.
In Section~\ref{sec:preliminary}, as a preliminary,
convergence theorems of the Sinc approximation
and the Sinc indefinite integration are described.
In Section~\ref{sec:nystroem}, the Sinc-Nystr\"{o}m
methods developed by Muhammad et al.~\cite{muhammad05:_numer}
are described, and their convergence theorems are stated.
In Section~\ref{sec:collocation}, the Sinc-collocation methods
developed by Stenger~\cite{stenger93:_numer} and
Rashidinia--Zarebnia~\cite{rashidinia07:_solut} are described.
Subsequently, new theoretical results from this study are stated:
(i) the two numerical solutions coincide at the collocation points
but are not generally equivalent, and
(ii) the two methods attain the same convergence rate
$\Order(\sqrt{N}\exp(-\sqrt{\pi d \alpha N}))$.
In Section~\ref{sec:de-collocation}, a new Sinc-collocation method
combined with the DE transformation is developed.
Subsequently, its convergence theorem is stated claiming that
the convergence rate is
$\Order(\exp(-\pi d  N/\log(2 d N/\alpha)))$.
In Section~\ref{sec:numer-result}, numerical experiments are presented,
where the DE-Sinc-collocation method demonstrates the best performance.
In Section~\ref{sec:proof-SE}, proofs for the new theorems
presented in Section~\ref{sec:collocation} are provided.
In Section~\ref{sec:proof-DE},
proofs for the new theorems
presented in Section~\ref{sec:de-collocation} are provided.
%In~\ref{sec:conclusion},
%conclusion and future work of this study are described.

%\input{preliminary.tex}
\section{Preliminaries}
\label{sec:preliminary}

This section summarizes the Sinc approximation
and Sinc indefinite integration and their application
with the aid of the tanh or DE transformation.

\subsection{Sinc approximation and Sinc indefinite integration}

The Sinc numerical methods are generic names of numerical methods
based on the \emph{Sinc approximation},
expressed as
\begin{equation}
F(x) \approx \sum_{j=-N}^N F(jh)S(j,h)(x),\quad x\in\mathbb{R},
\label{eq:Sinc-approximation}
\end{equation}
where $h$ is a mesh size appropriately chosen depending on $N$,
and the basis function $S(j,h)$ is the so-called Sinc function
defined by
\[
 S(j,h)(x) =
\begin{cases}
 \dfrac{\sin(\pi(x - jh)/h)}{\pi(x - jh)/h} & (x \neq jh),\\
 1 & (x = jh).
\end{cases}
\]
Integrating both sides of~\eqref{eq:Sinc-approximation},
we obtain an approximation formula called
the \emph{Sinc indefinite integration} as
\begin{align}
\label{eq:Sinc-indefinite}
\int_{-\infty}^{\xi}F(x)\diff x
&\approx \sum_{j=-N}^N F(jh) \int_{-\infty}^{\xi}S(j,h)(x)\diff x
=\sum_{j=-N}^N F(jh) J(j,h)(\xi),\quad\xi\in\mathbb{R},
\end{align}
where $J(j,h)$ is defined by
\[
 J(j,h)(x) = h \left\{
\frac{1}{2} + \frac{1}{\pi}\Si\left[\frac{\pi(x - jh)}{h}\right]
\right\},
\]
where $\Si(x)$ is the sine integral defined by
$\Si(x)=\int_0^x \{(\sin t) / t\}\diff t$.

\subsection{SE-Sinc approximation and SE-Sinc indefinite integration}

To use the approximation formulas~\eqref{eq:Sinc-approximation}
and~\eqref{eq:Sinc-indefinite},
the function $F(x)$ must be defined over the entire real line $\mathbb{R}$.
When the function $f(t)$ is
defined over the finite interval $(a, b)$,
a variable transformation is required to map $\mathbb{R}$
onto $(a, b)$.
For the purpose,
the tanh transformation $t=\SEt(x)$ defined in~\eqref{eq:SEt}
is widely employed.
The change of variable ($t=\SEt(x)$) enables us
to apply~\eqref{eq:Sinc-approximation} by setting $F(x)=f(\SEt(x))$.
Introducing $\tSE_j = \SEt(jh)$ and $\SEtInv(t)=\{\SEt\}^{-1}(t)$,
we express the obtained formula as
\begin{align}
% f(\SEt(x))
%& \approx \sum_{j=-N}^N f(\SEt(jh))S(j,h)(x),\quad x\in\mathbb{R},
%\nonumber
%\intertext{which is equivalent to}
 f(t)
& \approx \sum_{j=-N}^N f(\tSE_j)S(j,h)(\SEtInv(t)),\quad t\in(a, b).
\label{eq:SE-Sinc-approximation}
\end{align}
This approximation is referred to as the SE-Sinc approximation
in this paper.
Similarly, applying $s=\SEt(x)$ and
setting $F(x)=f(\SEt(x))$ in~\eqref{eq:Sinc-indefinite},
we obtain
\begin{align}
%\nonumber
 \int_a^t f(s) \diff s
= \int_{-\infty}^{\SEtInv(t)} f(\SEt(x))\SEtDiv(x)\diff x
\approx \sum_{j=-N}^N f(\tSE_j)\SEtDiv(jh) J(j,h)(\SEtInv(t)),
\quad t\in(a, b),
\label{eq:SE-Sinc-indefinite}
\end{align}
which is referred to as the SE-Sinc indefinite integration
in this paper.
If $F(x)=f(\psi(x))$ is analytic on the strip complex domain
\[
 \domD_d = \left\{\zeta\in\mathbb{C} : |\Im\zeta| < d\right\}
\]
for a positive constant $d$,
then both approximations performs highly accurately.
In other words, $f(t)$ should be analytic on the transformed domain
\[
 \SEt(\domD_d) = \left\{z=\SEt(\zeta) : \zeta\in\domD_d\right\},
\]
which is a simply-connected domain.
Convergence theorems of the two approximations were provided as follows.

\begin{theorem}[Stenger~{\cite[Theorem~4.2.5]{stenger93:_numer}}]
\label{thm:SE-Sinc-approx}
Assume that $f$ is analytic on $\SEt(\domD_d)$
for $d$ with $0<d<\pi$, and
there exists constants $K$ and $\alpha$ such that
\begin{equation}
 |f(z)|\leq K |z - a|^{\alpha}|b - z|^{\alpha}
\label{eq:LC}
\end{equation}
holds for all $z\in\SEt(\domD_d)$. Let $N$ be a positive integer,
and let $h$ be selected by the formula
\begin{equation}
\label{eq:h-SE}
 h = \sqrt{\frac{\pi d}{\alpha N}}.
\end{equation}
Then, there exists a constant $C$ independent of $N$ such that
\[
 \max_{t\in [a, b]}
\left|f(t) - \sum_{j=-N}^N f(\tSE_j)S(j,h)(\SEtInv(t))\right|
\leq C \sqrt{N} \rme^{-\sqrt{\pi d \alpha N}}.
\]
\end{theorem}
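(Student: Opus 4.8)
The plan is to pull the problem back to the real line through the conformal change of variable $t=\SEt(x)$ and then carry out the classical two-term error analysis for the Sinc approximation: a \emph{discretization error} from replacing $f$ by its full bilateral cardinal series, and a \emph{truncation error} from keeping only the $2N+1$ central terms. Since $\tanh(\zeta/2)$ is conformal on $|\Im\zeta|<\pi$, the hypothesis $0<d<\pi$ ensures that $\SEt$ maps $\domD_d$ conformally onto the (bounded, eye-shaped) domain $\SEt(\domD_d)$, so $F(x):=f(\SEt(x))$ is analytic on $\domD_d$; with $\tSE_j=\SEt(jh)$ the substitution $t=\SEt(x)$ turns the target quantity into
\[
\max_{t\in[a,b]}\Bigl|f(t)-\sum_{j=-N}^{N}f(\tSE_j)S(j,h)(\SEtInv(t))\Bigr|
=\sup_{x\in\mathbb{R}}\Bigl|F(x)-\sum_{j=-N}^{N}F(jh)S(j,h)(x)\Bigr|.
\]

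First I would read off the decay of $F$ from \eqref{eq:LC}. A short computation gives $\SEt(x)-a=(b-a)/(1+\rme^{-x})$ and $b-\SEt(x)=(b-a)/(1+\rme^{x})$, hence $(\SEt(x)-a)(b-\SEt(x))=(b-a)^2/(4\cosh^2(x/2))$, and therefore
\[
|F(x)|\le K(b-a)^{2\alpha}\bigl(2\cosh(x/2)\bigr)^{-2\alpha}\le K(b-a)^{2\alpha}\,\rme^{-\alpha|x|},\qquad x\in\mathbb{R}.
\]
Applying \eqref{eq:LC} on the horizontal lines $\Im\zeta=\pm d'$ with $d'<d$, and using that $\SEt(x\pm\imnum d')$ tends to $a$ or $b$ at an exponential rate as $x\to\mp\infty$, shows that $\int_{-\infty}^{\infty}|F(x\pm\imnum d')|\diff x$ is finite with a bound uniform in $d'<d$; thus $F$ belongs to the Hardy-type function class on $\domD_d$ underpinning the standard Sinc estimates (the only subtlety, that \eqref{eq:LC} is imposed on the \emph{open} region $\SEt(\domD_d)$, being dealt with by the routine device of working on $\domD_{d'}$ and letting $d'\uparrow d$).

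Next I would split
\[
F(x)-\sum_{j=-N}^{N}F(jh)S(j,h)(x)
=\Bigl[F(x)-\sum_{j=-\infty}^{\infty}F(jh)S(j,h)(x)\Bigr]+\sum_{|j|>N}F(jh)S(j,h)(x)
\]
and bound the two pieces. For the discretization term I would use the contour-integral representation of the cardinal-series remainder, obtained by integrating $F(\zeta)\sin(\pi x/h)\big/\bigl(2\pi\imnum(\zeta-x)\sin(\pi\zeta/h)\bigr)$ around $\partial\domD_{d'}$ (its residues at the poles $\zeta=jh$ reproduce the series, while the residue at $\zeta=x$ yields $F(x)$); estimating $|\sin(\pi\zeta/h)|\gtrsim\rme^{\pi d'/h}$ on $\Im\zeta=\pm d'$ and $|\sin(\pi x/h)|\le1$ on $\mathbb{R}$ then gives a bound $C_1\rme^{-\pi d'/h}$ with $C_1$ controlled by $\sup_{d'<d}\int_{-\infty}^{\infty}|F(\cdot\pm\imnum d')|\diff x$ and independent of $N$, and $d'\uparrow d$ produces $C_1\rme^{-\pi d/h}$. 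For the truncation term, $|S(j,h)(x)|\le1$ together with the decay bound give
\[
\Bigl|\sum_{|j|>N}F(jh)S(j,h)(x)\Bigr|
\le K(b-a)^{2\alpha}\sum_{|j|>N}\rme^{-\alpha|j|h}
\le\frac{2K(b-a)^{2\alpha}}{1-\rme^{-\alpha h}}\,\rme^{-\alpha Nh}.
\]
Finally, inserting the prescribed value \eqref{eq:h-SE}, $h=\sqrt{\pi d/(\alpha N)}$, makes $\pi d/h=\alpha Nh=\sqrt{\pi d\alpha N}$, while $1/(1-\rme^{-\alpha h})\le 2/(\alpha h)=\Order(\sqrt{N})$ for $N$ large; adding the two contributions yields the asserted $C\sqrt{N}\,\rme^{-\sqrt{\pi d\alpha N}}$, uniformly in $x\in\mathbb{R}$ and hence in $t\in[a,b]$.

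The main obstacle is the discretization-error estimate: one must legitimately set up the contour-integral representation — this is where the $L^1$-control of $F$ on the lines $\Im\zeta=\pm d'$ coming from \eqref{eq:LC} is indispensable, together with the $d'\uparrow d$ passage to the full strip — and then carry the residue calculation through to obtain the cardinal series with an exponentially small remainder. Everything else is routine: the decay estimate, the change of variables, the choice of $h$, and the geometric-series bound — the last of which is precisely what injects the extra $\sqrt{N}$ factor appearing in the statement.
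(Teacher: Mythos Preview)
The paper does not prove this theorem; it is quoted verbatim from Stenger's monograph as a preliminary result (Theorem~4.2.5 there) and no proof is supplied in the present paper. Your argument is correct and is precisely the classical proof one finds in that reference: pull back to the strip via $t=\SEt(x)$, split the error into a discretization term bounded by the contour-integral/residue representation as $C_1\rme^{-\pi d/h}$, and a truncation term bounded via the geometric tail as $C_2\,\rme^{-\alpha N h}/(1-\rme^{-\alpha h})$, then balance with $h=\sqrt{\pi d/(\alpha N)}$ so that both exponents equal $\sqrt{\pi d\alpha N}$ and the prefactor $1/(1-\rme^{-\alpha h})=\Order(1/h)=\Order(\sqrt{N})$ supplies the stated $\sqrt{N}$ factor.
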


\begin{theorem}[Okayama et al.~{\cite[Theorem~2.9]{okayama09:_error}}]
\label{thm:SE-Sinc-indefinite}
Assume that $f$ is analytic on $\SEt(\domD_d)$
for $d$ with $0<d<\pi$, and
there exists constants $K$ and $\alpha$ such that
\begin{equation}
\label{eq:QC}
 |f(z)|\leq K |z - a|^{\alpha - 1}|b - z|^{\alpha - 1}
\end{equation}
holds for all $z\in\SEt(\domD_d)$. Let $N$ be a positive integer,
and let $h$ be selected by the formula~\eqref{eq:h-SE}.
Then, there exists a constant $C$ independent of $N$ such that
\[
 \max_{t\in [a, b]}
\left|
\int_a^t f(s)\diff s - \sum_{j=-N}^N f(\tSE_j)\SEtDiv(jh)J(j,h)(\SEtInv(t))
\right|
\leq C \rme^{-\sqrt{\pi d \alpha N}}.
\]
\end{theorem}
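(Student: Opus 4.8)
The plan is to reduce the estimate to the classical error bound for the Sinc indefinite integration on the whole real line, to split the error into a \emph{discretization} part and a \emph{truncation} part, and to note that the choice~\eqref{eq:h-SE} of $h$ is exactly the value that balances the two. First I would put $x=\SEtInv(t)$ and $F(x)=f(\SEt(x))\SEtDiv(x)$, so that $\int_a^t f(s)\diff s=\int_{-\infty}^{x}F(\xi)\diff\xi$, the approximant equals $\sum_{j=-N}^{N}F(jh)J(j,h)(x)$, and $\max_{t\in[a,b]}$ turns into $\sup_{x\in\mathbb{R}}$. Inserting the infinite series $\sum_{j=-\infty}^{\infty}F(jh)J(j,h)(x)$ and applying the triangle inequality, the error is bounded by the sum of the discretization error $E_{\mathrm{d}}:=\sup_{x\in\mathbb{R}}\bigl|\int_{-\infty}^{x}F(\xi)\diff\xi-\sum_{j=-\infty}^{\infty}F(jh)J(j,h)(x)\bigr|$ and the truncation error $E_{\mathrm{t}}:=\sup_{x\in\mathbb{R}}\bigl|\sum_{|j|>N}F(jh)J(j,h)(x)\bigr|$.

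The observation that makes hypothesis~\eqref{eq:QC} usable is the identity $\SEtDiv(\zeta)=(\SEt(\zeta)-a)(b-\SEt(\zeta))/(b-a)$, which holds on $\domD_d$ by analytic continuation from $\mathbb{R}$; combined with~\eqref{eq:QC} it gives
\[
 |F(\zeta)|\le \frac{K}{b-a}\,|\SEt(\zeta)-a|^{\alpha}\,|b-\SEt(\zeta)|^{\alpha},\qquad \zeta\in\domD_d .
\]
In particular $F$ is analytic on $\domD_d$, the restriction $0<d<\pi$ being precisely what keeps $\SEt$ (hence $\SEtDiv$) pole-free there. I would then establish the uniform estimate $|F(x+\imnum c)|\le C(d)\,\rme^{-\alpha|x|}$ for all $x\in\mathbb{R}$ and all $|c|\le d$: as $x\to+\infty$ the factor $|b-\SEt(x+\imnum c)|$ decays like $\rme^{-|x|}$ while $|\SEt(x+\imnum c)-a|$ stays bounded, the roles reversing as $x\to-\infty$, with the constant controlled uniformly in $c$ thanks to $d<\pi$. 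This shows that $F$ belongs to the function class (analytic on $\domD_d$ with absolutely integrable boundary traces) for which the standard theory of the Sinc indefinite integration applies, and it yields $E_{\mathrm{d}}\le C_1\,\rme^{-\pi d/h}$; taking $c=0$ it also gives the real-line decay $|F(jh)|\le C(d)\,\rme^{-\alpha|jh|}$ used below.

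For $E_{\mathrm{t}}$ the essential point is that $J(j,h)$ is uniformly bounded: since $J(j,h)(x)=h\{\tfrac12+\tfrac1{\pi}\Si(\pi(x-jh)/h)\}$ and $\Si$ is bounded, $|J(j,h)(x)|\le C h$ for all $x$ and $j$. Hence
\[
 E_{\mathrm{t}}\le C h\sum_{|j|>N}\rme^{-\alpha|j|h}\le C_2\,\rme^{-\alpha N h},
\]
the factors of $h$ cancelling --- which is why, unlike in Theorem~\ref{thm:SE-Sinc-approx}, no algebraic factor of $N$ remains. Finally, substituting $h=\sqrt{\pi d/(\alpha N)}$ turns $E_{\mathrm{d}}\le C_1\rme^{-\pi d/h}$ into $C_1\rme^{-\sqrt{\pi d\alpha N}}$ and $E_{\mathrm{t}}\le C_2\rme^{-\alpha N h}$ into $C_2\rme^{-\sqrt{\pi d\alpha N}}$, and adding them gives the claim with $C=C_1+C_2$.

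The step I expect to be the main obstacle is the uniform boundary estimate for $F$ on $\domD_d$, i.e.\ proving $|F(x+\imnum c)|\le C(d)\rme^{-\alpha|x|}$ with a constant independent of $c\in[-d,d]$, and thereby certifying that $F$ lies in the space for which the discretization bound carries the exponent $\pi d/h$. This requires a careful analysis of how far $\SEt(\zeta)$ stays from the endpoints $a$ and $b$ as $\zeta$ ranges over the closed strip --- where the hypothesis $0<d<\pi$ enters essentially --- together with transporting the pointwise bound~\eqref{eq:QC} through the conformal map. By comparison the truncation estimate and the final substitution are routine.
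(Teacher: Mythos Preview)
The paper does not prove this theorem: it is stated in the preliminaries section and attributed to Okayama et al.~\cite[Theorem~2.9]{okayama09:_error}, with no argument given here. There is therefore nothing in the present paper to compare your proposal against.

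That said, your sketch is the standard route to such results and is essentially correct. The change of variables $F=(f\circ\SEt)\cdot\SEtDiv$, the identity $\SEtDiv=(\SEt-a)(b-\SEt)/(b-a)$, the resulting bound $|F(\zeta)|\le K(b-a)^{-1}|\SEt(\zeta)-a|^{\alpha}|b-\SEt(\zeta)|^{\alpha}$, the split into discretization and truncation errors, the use of $|J(j,h)|\le Ch$ so that the $h$ from $J$ cancels the $1/h$ from the geometric tail, and the balancing via~\eqref{eq:h-SE} are exactly the ingredients used in the cited source. Your identification of the uniform strip estimate $|F(x+\imnum c)|\le C(d)\rme^{-\alpha|x|}$ as the only nontrivial step is accurate; once that is established (this is where $0<d<\pi$ is used to keep $\SEt$ bounded away from $a,b$ on the closed strip), the rest is routine.
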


\subsection{DE-Sinc approximation and DE-Sinc indefinite integration}

The SE-Sinc approximation~\eqref{eq:SE-Sinc-approximation}
and SE-Sinc indefinite integration~\eqref{eq:SE-Sinc-indefinite}
employ the tanh transformation~\eqref{eq:SEt} to
map $\mathbb{R}$ onto the finite interval $(a, b)$.
The DE transformation~\eqref{eq:DEt} also plays the same role,
and allows for the replacement of $\SEt$ with $\DEt$ in both formulas.
On the basis of this idea,
introducing $\tDE_j = \DEt(jh)$ and $\DEtInv(t)=\{\DEt\}^{-1}(t)$,
we can derive the following formulas
\begin{align}
\label{eq:DE-Sinc-approximation}
 f(t)
& \approx \sum_{j=-N}^N f(\tDE_j)S(j,h)(\DEtInv(t)),\quad t\in(a, b),\\
\label{eq:DE-Sinc-indefinite}
 \int_a^t f(s) \diff s
&\approx \sum_{j=-N}^N f(\tDE_j)\DEtDiv(jh) J(j,h)(\DEtInv(t)),
\quad t\in(a, b),
\end{align}
which are referred to as the DE-Sinc approximation
and DE-Sinc indefinite integration, respectively.
For the formulas~\eqref{eq:DE-Sinc-approximation}
and~\eqref{eq:DE-Sinc-indefinite},
$f(t)$ should be analytic on the transformed domain
\[
 \DEt(\domD_d) = \left\{z=\DEt(\zeta) : \zeta\in\domD_d\right\},
\]
which forms a Riemann surface.
Convergence theorems of the two approximations were provided as follows.

\begin{theorem}[Tanaka et al.~{\cite[Theorem~3.1]{tanaka09:_desinc}}]
Assume that $f$ is analytic on $\DEt(\domD_d)$
for $d$ with $0<d<\pi/2$, and
there exists constants $K$ and $\alpha$ such that~\eqref{eq:LC}
holds for all $z\in\DEt(\domD_d)$. Let $N$ be a positive integer,
and let $h$ be selected by the formula
\begin{equation}
\label{eq:h-DE}
 h = \frac{\log(2 d N/\alpha)}{N}.
\end{equation}
Then, there exists a constant $C$ independent of $N$ such that
\[
 \max_{t\in [a, b]}
\left|f(t) - \sum_{j=-N}^N f(\tDE_j)S(j,h)(\DEtInv(t))\right|
\leq C \rme^{-\pi d N/\log(2 d N/\alpha)}.
\]
\end{theorem}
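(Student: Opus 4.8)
The plan is to follow the classical Sinc-interpolation route: transplant to the real line, split the error into an interpolation part and a truncation part, and balance them through the mesh size~\eqref{eq:h-DE}. Set $F=f\circ\DEt$. Since $f$ is analytic on the Riemann surface $\DEt(\domD_d)$, the composite $F$ is single-valued and analytic on the strip $\domD_d$; and since $\DEtInv$ maps $(a,b)$ bijectively onto $\mathbb{R}$ with $f$ vanishing at $t=a$ and $t=b$ by~\eqref{eq:LC}, the left-hand side of the asserted inequality equals
\[
 \sup_{x\in\mathbb{R}}\left|F(x)-\sum_{j=-N}^{N}F(jh)S(j,h)(x)\right|,
\]
the endpoints $t=a,b$ being covered by continuity, as there both $F$ and the Sinc sum vanish. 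I would bound this by the \emph{interpolation error} $\sup_{x\in\mathbb{R}}\bigl|F(x)-\sum_{j=-\infty}^{\infty}F(jh)S(j,h)(x)\bigr|$ plus the \emph{truncation error} $\sup_{x\in\mathbb{R}}\bigl|\sum_{|j|>N}F(jh)S(j,h)(x)\bigr|$, and estimate the two separately.

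The crux is a decay estimate for $F$ on the strip: $F$ decays double-exponentially on $\domD_d$, uniformly up to the boundary lines $\Im\zeta=\pm d$. For $\zeta=x+\imnum y$ with $|y|\leq d<\pi/2$, writing $w=(\pi/2)\sinh\zeta$ (so $\Re w=(\pi/2)\sinh x\cos y$) and using $1\pm\tanh w=2\rme^{\pm2w}/(1+\rme^{\pm2w})$, one checks that $|\DEt(\zeta)-a|^{\alpha}|b-\DEt(\zeta)|^{\alpha}=\Order(\rme^{-\pi\alpha(\cos d)\sinh|x|})$ as $|x|\to\infty$; hence, by~\eqref{eq:LC}, $|F(\zeta)|\leq C_1\rme^{-c_d\rme^{|x|}}$ on $\domD_d$ for constants $C_1,c_d>0$ depending only on $a,b,d,\alpha,K$. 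Because $d<\pi/2$ we have $\cos d>0$, so this estimate survives (with a uniform constant) the usual limiting argument in which one works on a sub-strip $\domD_{d'}$ with $d'<d$ and lets $d'\uparrow d$; in particular $F$ belongs to the function class on $\domD_d$ for which the classical Sinc-cardinal-series estimate holds, and the relevant norm of $F$ is a fixed number, independent of $N$ and $h$.

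For the interpolation error I would invoke that classical estimate (the lemma underlying the proof of Theorem~\ref{thm:SE-Sinc-approx}), which yields $\sup_{x\in\mathbb{R}}\bigl|F(x)-\sum_{j=-\infty}^{\infty}F(jh)S(j,h)(x)\bigr|\leq C_2\rme^{-\pi d/h}$ with $C_2$ independent of $h$ (the only $h$-dependent factor in the classical bound, $(1-\rme^{-2\pi d/h})^{-1}$, tends to $1$ as $h\to0$). For the truncation error, $|S(j,h)(x)|\leq1$ together with the decay estimate restricted to $\mathbb{R}$ gives
\[
 \sup_{x\in\mathbb{R}}\left|\sum_{|j|>N}F(jh)S(j,h)(x)\right|\leq\sum_{|j|>N}|F(jh)|=\Order\bigl(\rme^{-\pi\alpha\sinh(Nh)}\bigr),
\]
the last equality because this series is super-geometric and hence dominated by its largest term, so that \emph{no} polynomial-in-$N$ factor appears (in contrast to the tanh case, where the geometric-tail ratio tends to $1$ and produces the $\sqrt{N}$ of Theorem~\ref{thm:SE-Sinc-approx}). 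Finally, substituting $h=\log(2dN/\alpha)/N$ from~\eqref{eq:h-DE} gives $\rme^{-\pi d/h}=\rme^{-\pi dN/\log(2dN/\alpha)}$, while $\rme^{Nh}=2dN/\alpha$ forces $\sinh(Nh)\sim dN/\alpha$, so the truncation error is exponentially small in $N$ and negligible beside the interpolation term. Adding the two estimates yields the asserted bound with $C$ independent of $N$.

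The routine parts are the splitting and the final balancing; the real work is the middle step — establishing the double-exponential decay of $F=f\circ\DEt$ uniformly on the \emph{complex} strip $\domD_d$, not merely on $\mathbb{R}$. This needs careful endpoint asymptotics of $\DEt(\zeta)$ as $\Re\zeta\to\pm\infty$ along lines close to $\Im\zeta=\pm d$, and attention to the Riemann-surface (multi-sheeted) structure of $\DEt(\domD_d)$ so that $f\circ\DEt$ is genuinely analytic and single-valued there. One must also confirm that the prefactor in the Sinc-cardinal-series estimate stays bounded as $h\to0$, which is precisely where the decay of $F$ — its membership in the relevant Hardy-type class — is used.
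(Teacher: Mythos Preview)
The paper does not prove this theorem; it is quoted without proof as a known result of Tanaka et al.\ and used only as a tool. Your sketch is precisely the standard argument behind that reference: transplant to the strip via $F=f\circ\DEt$, verify from~\eqref{eq:LC} that $F$ decays double-exponentially on $\domD_d$ (the key computation being $|\DEt(\zeta)-a|^{\alpha}|b-\DEt(\zeta)|^{\alpha}=\Order(\rme^{-\pi\alpha\cos(d)\sinh|x|})$), split the error into the cardinal-series part $\Order(\rme^{-\pi d/h})$ and the truncation part $\Order(\rme^{-\pi\alpha\sinh(Nh)})$, then balance through~\eqref{eq:h-DE}. There is nothing within this paper to compare against, and your outline is correct as written.
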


\begin{theorem}[Okayama et al.~{\cite[Theorem~2.16]{okayama09:_error}}]
\label{thm:DE-Sinc-indefinite}
Assume that $f$ is analytic on $\DEt(\domD_d)$
for $d$ with $0<d<\pi/2$, and
there exists constants $K$ and $\alpha$ such that~\eqref{eq:QC}
holds for all $z\in\DEt(\domD_d)$. Let $N$ be a positive integer,
and let $h$ be selected by the formula~\eqref{eq:h-DE}.
Then, there exists a constant $C$ independent of $N$ such that
\begin{align*}
 \max_{t\in [a, b]}
\left|
\int_a^t f(s)\diff s - \sum_{j=-N}^N f(\tDE_j)\DEtDiv(jh)J(j,h)(\DEtInv(t))
\right|
\leq C \frac{\log(2 d N/\alpha)}{N}\rme^{-\pi d N/\log(2 d N/\alpha)}.
\end{align*}
\end{theorem}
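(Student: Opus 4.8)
The plan is the standard one for a Sinc indefinite integration: reduce the problem to the real line by the DE change of variables, split the error into a \emph{discretization} part and a \emph{truncation} part, and balance the two by the prescribed choice~\eqref{eq:h-DE} of the mesh size $h$. Put $F(x)=f(\DEt(x))\DEtDiv(x)$. Because $0<d<\pi/2$ ensures that $\DEt$ is analytic on $\domD_d$, and $f$ is analytic on $\DEt(\domD_d)$ by hypothesis, the composite $F$ is analytic on $\domD_d$; moreover the substitution $s=\DEt(x)$ gives $\int_a^t f(s)\diff s=\int_{-\infty}^{\DEtInv(t)}F(x)\diff x$ for every $t\in(a,b)$. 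Writing $\xi=\DEtInv(t)$, which ranges over all of $\mathbb{R}$, the quantity to be bounded is
\[
 \left|\int_{-\infty}^{\xi}F(x)\diff x-\sum_{j=-N}^{N}F(jh)J(j,h)(\xi)\right|,
\]
and the goal is an estimate uniform in $\xi\in\mathbb{R}$.

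The core of the proof is a decay estimate for $F$ on the strip. Combining the hypothesis~\eqref{eq:QC} with the standard bounds for $|\DEt(\zeta)-a|$, $|b-\DEt(\zeta)|$, and $|\DEtDiv(\zeta)|$ on $\domD_d$, I would show that $F$ lies in the Hardy class $\mathbf{H}^1(\domD_d)$ (so that its boundary values on $\Im\zeta=\pm d$ are integrable) and, more to the point, that there exist a constant $C_F$ and a rate $\beta$, which can be chosen arbitrarily close to $\alpha\pi/2$ from below, with
\[
 |F(x)|\leq C_F\,\rme^{-\beta\rme^{|x|}},\qquad x\in\mathbb{R}.
\]
This double-exponential decay is the mechanism behind the efficiency of the DE transformation: the endpoint singularities allowed by~\eqref{eq:QC} at $a$ and $b$ are more than compensated by the double-exponential factors coming from $\DEtDiv$.

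Next, split the error as $E_{\mathrm d}+E_{\mathrm t}$, with $E_{\mathrm d}=\bigl|\int_{-\infty}^{\xi}F-\sum_{j=-\infty}^{\infty}F(jh)J(j,h)(\xi)\bigr|$ and $E_{\mathrm t}=\bigl|\sum_{|j|>N}F(jh)J(j,h)(\xi)\bigr|$. Since $F\in\mathbf{H}^1(\domD_d)$, the term $E_{\mathrm d}$ is handled by the standard contour-integral estimate for the discretization error of a Sinc indefinite integration, which yields $E_{\mathrm d}\leq C_1\,h\,\rme^{-\pi d/h}$ with $C_1$ independent of $h$ and $\xi$. For $E_{\mathrm t}$, the uniform bound $\|J(j,h)\|_{L^\infty(\mathbb{R})}\leq c_0 h$ (valid because $\Si$ is globally bounded) together with the decay estimate gives
\[
 E_{\mathrm t}\leq c_0 h\sum_{|j|>N}|F(jh)|\leq 2c_0 C_F\,h\sum_{j>N}\rme^{-\beta\rme^{jh}}\leq C_2\,h\,\rme^{-\beta\rme^{Nh}},
\]
where the final inequality uses that the terms $\rme^{-\beta\rme^{jh}}$ decay faster than any geometric sequence, so the tail is comparable to its first term. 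Inserting $h=\log(2dN/\alpha)/N$ we get $\rme^{Nh}=2dN/\alpha$, hence $E_{\mathrm t}\leq C_2 h\,\rme^{-2\beta dN/\alpha}$; choosing $\beta$ close enough to $\alpha\pi/2$ makes $2\beta d/\alpha$ as close to $\pi d$ as desired, so $E_{\mathrm t}$ is of strictly smaller exponential order than $E_{\mathrm d}$, whereas $\rme^{-\pi d/h}=\rme^{-\pi dN/\log(2dN/\alpha)}$ gives $E_{\mathrm d}\leq C_1\,\dfrac{\log(2dN/\alpha)}{N}\,\rme^{-\pi dN/\log(2dN/\alpha)}$. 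Adding the two contributions and absorbing constants yields the claimed bound, uniformly in $t\in[a,b]$ since none of the estimates depended on $\xi$.

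The main obstacle is the decay estimate of the second paragraph, that is, controlling $f(\DEt(\zeta))\DEtDiv(\zeta)$ uniformly over the complex strip $\domD_d$, whose image $\DEt(\domD_d)$ is the Riemann surface winding around the endpoints $a$ and $b$; one has to check that~\eqref{eq:QC} is applicable there, estimate the resulting quantity, and track the constant in the double-exponential exponent carefully enough that it stays close to $\alpha\pi/2$ (which is what lets $E_{\mathrm t}$ be absorbed into $E_{\mathrm d}$ after fixing $h$). This is also where the restriction $0<d<\pi/2$ appears, in contrast with $0<d<\pi$ in Theorem~\ref{thm:SE-Sinc-indefinite}: the inner $\sinh$ in~\eqref{eq:DEt} already consumes part of the admissible strip width before the outer $\tanh$ is applied. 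Once this estimate and the Hardy-class membership are established, the bounds on $E_{\mathrm d}$ and $E_{\mathrm t}$ are routine.
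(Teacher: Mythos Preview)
The paper does not prove this statement at all: Theorem~\ref{thm:DE-Sinc-indefinite} is quoted in Section~\ref{sec:preliminary} as a preliminary result from the cited reference (Okayama et al.~\cite[Theorem~2.16]{okayama09:_error}), and no argument for it is given anywhere in the paper. There is therefore no ``paper's own proof'' to compare your attempt against.

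That said, your sketch is a sound outline of the standard argument for DE-Sinc indefinite integration error bounds and is essentially what one finds in the cited source: reduce to the real line via $s=\DEt(x)$, verify that $F=f\circ\DEt\cdot\DEtDiv$ belongs to a Hardy-type class on the strip $\domD_d$ with double-exponential decay along $\mathbb{R}$, split into discretization and truncation errors, bound the former by $\Order(h\,\rme^{-\pi d/h})$ via contour integration and the latter by the tail decay, and then balance with the choice~\eqref{eq:h-DE}. One small caution: your discretization bound $E_{\mathrm d}\le C_1 h\,\rme^{-\pi d/h}$ implicitly assumes the Hardy norm of $F$ on $\domD_d$ is finite and independent of $h$; establishing this (and the precise decay exponent near $\alpha\pi/2$) from~\eqref{eq:QC} on the Riemann surface $\DEt(\domD_d)$ is indeed the only nontrivial step, as you note.
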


\subsection{Generalized SE/DE-Sinc approximation}

In the convergence theorems of the SE/DE-Sinc approximation,
the condition~\eqref{eq:LC} is assumed.
This condition requires $f(t)$ to be zero at the endpoints $t=a$ and $t=b$,
which seems an impractical condition.
To address this issue,
using auxiliary functions
\[
 \omega_a(t) = \frac{b - t}{b - a},\quad
 \omega_b(t) = \frac{t - a}{b - a},
\]
and setting
$\tilde{f}^{\textSE}_N(t)=f(t) -f(\tSE_{-N})\omega_a(t) -f(\tSE_N)\omega_b(t)$,
Stenger~\cite{stenger93:_numer,stenger00:_summar}
proposed to apply the SE-Sinc approximation to $\tilde{f}^{\textSE}_N$.
%derived the following approximation
%\begin{equation*}
% \tilde{f}^{\textSE}_N(t)
%\approx \sum_{j=-N}^N \tilde{f}^{\textSE}_N(\tSE_j)S(j,h)(\SEtInv(t)),
%\end{equation*}
%by applying the SE-Sinc approximation to $\tilde{f}^{\textSE}_N$.
If we define an approximation operator $\ProjSE: C([a,b])\to C([a, b])$ as
\begin{align}
\label{eq:ProjSE}
(\ProjSE f)(t)
=
f(\tSE_{-N})\omega_a(t) + f(\tSE_N)\omega_b(t)
 + \sum_{j=-N}^N \tilde{f}^{\textSE}_N(\tSE_j) S(j,h)(\SEtInv(t)),
\end{align}
then the approximation is expressed as $f\approx \ProjSE f$.
This approximation is referred to as the generalized SE-Sinc approximation
in this paper.
Notably, $\ProjSE$ satisfies
the interpolation property, that is,
$f(\tSE_i) = (\ProjSE f)(\tSE_i)$ ($i=-N,\,\ldots,\,N$).

Similarly, setting
$\tilde{f}^{\textDE}_N(t)=f(t) -f(\tDE_{-N})\omega_a(t) -f(\tDE_N)\omega_b(t)$,
we may apply the DE-Sinc approximation to $\tilde{f}^{\textDE}_N$.
%as
%\begin{equation*}
% \tilde{f}^{\textDE}_N(t)
%\approx \sum_{j=-N}^N \tilde{f}^{\textDE}_N(\tDE_j)S(j,h)(\DEtInv(t)).
%\end{equation*}
If we define an approximation operator $\ProjDE: C([a,b])\to C([a, b])$ as
\begin{align}
\label{eq:ProjDE}
(\ProjDE f)(t)
=
f(\tDE_{-N})\omega_a(t) + f(\tDE_N)\omega_b(t)
 + \sum_{j=-N}^N \tilde{f}^{\textSE}_N(\tDE_j) S(j,h)(\DEtInv(t)),
\end{align}
then the approximation is expressed as $f\approx \ProjDE f$.
This approximation is referred to as the generalized DE-Sinc approximation
in this paper.
$\ProjDE$ also satisfies
the interpolation property, that is,
$f(\tDE_i) = (\ProjDE f)(\tDE_i)$ ($i=-N,\,\ldots,\,N$).

The convergence theorems of the two approximations are described
using the following function spaces.

\begin{definition}
Let $\domD$ be a bounded and simply-connected domain
(or Riemann surface).
Then, $\Hinf(\domD)$ denotes the family of functions $f$
analytic on $\domD$ such that the norm $\|f\|_{\Hinf(\domD)}$ is finite,
where
\[
 \|f\|_{\Hinf(\domD)} = \sup_{z\in\domD}|f(z)|.
\]
\end{definition}
\begin{definition}
Let $\alpha$ be a positive constant, and
let $\domD$ be a bounded and simply-connected domain
(or Riemann surface) that satisfies $(a, b)\subset \domD$.
Then, $\MC_{\alpha}(\domD)$ denotes the family of functions $f\in\Hinf(\domD)$
for which there exists a constant $L$ such that for all $z\in\domD$,
\begin{align*}
 |f(z) - f(a)| &\leq L |z - a|^{\alpha},\\
 |f(b) - f(z)| &\leq L |b - z|^{\alpha}.
\end{align*}
\end{definition}

This function space $\MC_{\alpha}(\domD)$ only requires
the H\"{o}lder continuity at the endpoints instead of
the zero-boundary condition by~\eqref{eq:LC}.
Convergence theorems of the two approximations were provided as follows.
Here, $\|\cdot\|_{C([a,b])}$ denotes the usual uniform norm over $[a, b]$.

\begin{theorem}[Okayama~{\cite[Theorem~3]{okayama13:_note}}]
\label{thm:SE-Sinc-general}
Assume that $f\in\MC_{\alpha}(\SEt(\domD_d))$ for $d$ with $0<d<\pi$.
Let $N$ be a positive integer,
and let $h$ be selected by the formula~\eqref{eq:h-SE}.
Then, there exists a constant $C$ independent of $N$ such that
\[
 \|f - \ProjSE f\|_{C([a,b])}
\leq C \sqrt{N}\rme^{-\sqrt{\pi d \alpha N}}.
\]
\end{theorem}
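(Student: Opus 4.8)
I would prove this by reducing the estimate to the (uncorrected) convergence result of Theorem~\ref{thm:SE-Sinc-approx}, after peeling off the affine function that matches $f$ at the endpoints. Set
\[
 g(t) = f(t) - f(a)\,\omega_a(t) - f(b)\,\omega_b(t),
\]
which lies in $\Hinf(\SEt(\domD_d))$ (since $\omega_a,\omega_b$ are affine and $\MC_{\alpha}(\SEt(\domD_d))\subset\Hinf(\SEt(\domD_d))$) and vanishes at $t=a$ and $t=b$. Writing $(Q_N\phi)(t)=\sum_{j=-N}^N \phi(\tSE_j)\,S(j,h)(\SEtInv(t))$ for the uncorrected SE-Sinc approximation operator, I would first record, from~\eqref{eq:ProjSE} and the linearity of $Q_N$, the identity $\ProjSE f = Q_N f + f(\tSE_{-N})(\omega_a-Q_N\omega_a) + f(\tSE_N)(\omega_b-Q_N\omega_b)$, and then substitute $f=g+f(a)\omega_a+f(b)\omega_b$ into $f-Q_N f$ to obtain
\[
 f-\ProjSE f = (g-Q_N g) + \bigl[f(a)-f(\tSE_{-N})\bigr](\omega_a-Q_N\omega_a) + \bigl[f(b)-f(\tSE_N)\bigr](\omega_b-Q_N\omega_b).
\]
It then remains to bound the three terms on the right.

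The hard part will be the first term, where I would check that $g$ satisfies hypothesis~\eqref{eq:LC} of Theorem~\ref{thm:SE-Sinc-approx} with the same exponent $\alpha$. Since $0<d<\pi$, the region $\SEt(\domD_d)$ is bounded, so $R:=\sup_{z\in\SEt(\domD_d)}\max(|z-a|,|b-z|)$ is finite. Using $\omega_a+\omega_b\equiv 1$, I would write $g(z)=[f(z)-f(a)]-[f(b)-f(a)]\,\omega_b(z)$; combining the H\"older estimate $|f(z)-f(a)|\le L|z-a|^{\alpha}$ from $\MC_{\alpha}$ with the bound $|\omega_b(z)|=|z-a|/(b-a)\le R^{1-\alpha}|z-a|^{\alpha}/(b-a)$ (here $0<\alpha\le 1$ is used) gives $|g(z)|\le L_1|z-a|^{\alpha}$ for a constant $L_1$, and symmetrically $|g(z)|\le L_2|b-z|^{\alpha}$. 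Distinguishing $|z-a|\le(b-a)/2$ from $|z-a|>(b-a)/2$ then yields $|g(z)|\le K|z-a|^{\alpha}|b-z|^{\alpha}$ with $K=\max(L_1,L_2)/((b-a)/2)^{\alpha}$, so Theorem~\ref{thm:SE-Sinc-approx} applies to $g$ and delivers $\|g-Q_N g\|_{C([a,b])}\le C\sqrt{N}\,\rme^{-\sqrt{\pi d\alpha N}}$.

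For the other two terms, from $\SEt(x)-a=(b-a)/(1+\rme^{-x})$ one gets $0<\tSE_{-N}-a\le(b-a)\rme^{-Nh}$ and, symmetrically, $0<b-\tSE_N\le(b-a)\rme^{-Nh}$; hence the $\MC_{\alpha}$ property gives $|f(a)-f(\tSE_{-N})|\le L(b-a)^{\alpha}\rme^{-\alpha Nh}$ and $|f(b)-f(\tSE_N)|\le L(b-a)^{\alpha}\rme^{-\alpha Nh}$, and the choice~\eqref{eq:h-SE} makes $\alpha Nh=\sqrt{\pi d\alpha N}$, so both bracketed factors are $\Order(\rme^{-\sqrt{\pi d\alpha N}})$. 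Since $\|\omega_a\|_{C([a,b])}=\|\omega_b\|_{C([a,b])}=1$ and, by $\sum_{j\in\mathbb{Z}}S(j,h)(x)^2=1$ together with the Cauchy--Schwarz inequality, $\sum_{j=-N}^N |S(j,h)(x)|\le\sqrt{2N+1}$ for every $x\in\mathbb{R}$, it follows that $\|\omega_a-Q_N\omega_a\|_{C([a,b])}\le 1+\sqrt{2N+1}=\Order(\sqrt{N})$ and likewise for $\omega_b$. Combining the three bounds gives $\|f-\ProjSE f\|_{C([a,b])}=\Order(\sqrt{N}\,\rme^{-\sqrt{\pi d\alpha N}})$ with a constant independent of $N$. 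Everything outside the verification of~\eqref{eq:LC} for $g$ is routine bookkeeping; in particular, the crude $\Order(\sqrt{N})$ bound on the Lebesgue-type constant of $Q_N$ more than suffices here, since it only ever multiplies an exponentially small factor.
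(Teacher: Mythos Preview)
The paper does not prove Theorem~\ref{thm:SE-Sinc-general}; it is quoted from an external reference, so there is no in-paper proof to compare against. Your argument is correct and follows the natural route one would expect in that reference: subtract the affine interpolant at the true endpoints to produce a function $g$ satisfying the decay hypothesis~\eqref{eq:LC}, apply Theorem~\ref{thm:SE-Sinc-approx} to $g$, and control the two boundary-correction terms using the H\"older estimates from $\MC_{\alpha}$ together with the exponential closeness of $\tSE_{\pm N}$ to the endpoints. The decomposition identity for $f-\ProjSE f$, the verification of~\eqref{eq:LC} for $g$ via the case split on $|z-a|\lessgtr (b-a)/2$, and the endpoint bounds are all sound.

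Two minor remarks. First, you explicitly use $0<\alpha\le 1$ when writing $|z-a|\le R^{1-\alpha}|z-a|^{\alpha}$; this restriction is not stated in Theorem~\ref{thm:SE-Sinc-general} itself, but it is the only meaningful regime and is assumed every time the paper invokes the result (cf.\ Theorem~\ref{thm:SE-Sinc-collocation}). Second, the paper records the sharper Lebesgue-type bound $\sum_{j=-N}^N |S(j,h)(x)|\le \frac{2}{\pi}(3+\log N)$ (used in Section~\ref{sec:proof-RZ-Sinc}); your Cauchy--Schwarz estimate $\sqrt{2N+1}$ is cruder but, as you say, more than sufficient since it multiplies an exponentially small factor.
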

\begin{theorem}[Okayama~{\cite[Theorem~6]{okayama13:_note}}]
\label{thm:DE-Sinc-general}
Assume that $f\in\MC_{\alpha}(\DEt(\domD_d))$ for $d$ with $0<d<\pi/2$.
Let $N$ be a positive integer,
and let $h$ be selected by the formula~\eqref{eq:h-DE}.
Then, there exists a constant $C$ independent of $N$ such that
\[
 \|f - \ProjDE f\|_{C([a,b])}
\leq C \rme^{-\pi d N/\log(2 d N/\alpha)}.
\]
\end{theorem}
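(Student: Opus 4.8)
The plan is to reduce the estimate to the convergence theorem for the plain DE-Sinc approximation stated above (Tanaka et al.), which concerns functions obeying~\eqref{eq:LC}, by peeling off the two endpoint terms hidden in $\ProjDE$. Concretely, write $\mathcal{S}_N[\phi](t)=\sum_{j=-N}^N\phi(\tDE_j)S(j,h)(\DEtInv(t))$ for the plain DE-Sinc interpolant and set $g(t)=f(t)-f(a)\omega_a(t)-f(b)\omega_b(t)$. Using $\omega_a+\omega_b\equiv 1$, the definition~\eqref{eq:ProjDE} gives $f-\ProjDE f=\tilde{f}^{\textDE}_N-\mathcal{S}_N[\tilde{f}^{\textDE}_N]$, and since $\tilde{f}^{\textDE}_N=g+(f(a)-f(\tDE_{-N}))\omega_a+(f(b)-f(\tDE_N))\omega_b$, linearity of $\mathcal{S}_N$ yields
\begin{align*}
 f-\ProjDE f
 &= \bigl(g-\mathcal{S}_N[g]\bigr)
  + \bigl(f(a)-f(\tDE_{-N})\bigr)\bigl(\omega_a-\mathcal{S}_N[\omega_a]\bigr)\\
 &\quad{}+ \bigl(f(b)-f(\tDE_N)\bigr)\bigl(\omega_b-\mathcal{S}_N[\omega_b]\bigr).
\end{align*}

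The first term carries the convergence rate. The key point is that $g\in\Hinf(\DEt(\domD_d))$ and $g$ satisfies~\eqref{eq:LC} with the same $\alpha$ as $f$ and a constant $K$ \emph{independent of $N$}. Indeed $g(a)=g(b)=0$, $g$ is analytic on $\DEt(\domD_d)$ because $\omega_a,\omega_b$ are polynomials and $f\in\Hinf(\DEt(\domD_d))$, and from $g(z)=(f(z)-f(a))\omega_a(z)-(f(b)-f(z))\omega_b(z)$ together with $|\omega_a(z)|=|b-z|/(b-a)$, $|\omega_b(z)|=|z-a|/(b-a)$ and the H\"older bounds of $\MC_\alpha$ we get
\[
 |g(z)|\leq\frac{L}{b-a}\,|z-a|^{\alpha}|b-z|^{\alpha}\bigl(|b-z|^{1-\alpha}+|z-a|^{1-\alpha}\bigr)\leq K\,|z-a|^{\alpha}|b-z|^{\alpha},
\]
where $K$ depends only on $L$, $\alpha$ and the diameter of the bounded Riemann surface $\DEt(\domD_d)$ (this is where $0<\alpha\leq1$ enters). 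Hence the DE-Sinc approximation theorem applies to $g$ with $h$ chosen by~\eqref{eq:h-DE}, giving $\|g-\mathcal{S}_N[g]\|_{C([a,b])}\leq C\rme^{-\pi dN/\log(2dN/\alpha)}$.

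For the other two terms I would show they are negligible. Since $\tDE_{-N}=\DEt(-Nh)\in(a,b)\subset\DEt(\domD_d)$, the H\"older condition gives $|f(a)-f(\tDE_{-N})|\leq L(\tDE_{-N}-a)^{\alpha}$, and from~\eqref{eq:DEt} one computes $\tDE_{-N}-a=(b-a)/(1+\rme^{\pi\sinh(Nh)})$, which under $Nh=\log(2dN/\alpha)$ is double-exponentially small, of order $\rme^{-\pi dN/\alpha}$ up to lower-order factors; the symmetric estimate holds for $b-\tDE_N$. On the other hand $\|\omega_a-\mathcal{S}_N[\omega_a]\|_{C([a,b])}$ and $\|\omega_b-\mathcal{S}_N[\omega_b]\|_{C([a,b])}$ grow at most like $\Order(\log N)$, since $\|\omega_a\|_{C([a,b])}=\|\omega_b\|_{C([a,b])}=1$ and the Lebesgue constant $\sup_{x\in\mathbb{R}}\sum_{j=-N}^N|S(j,h)(x)|$ of Sinc interpolation is $\Order(\log N)$ uniformly in $h$. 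Multiplying, the last two terms are $\Order\bigl(\rme^{-\pi dN/\alpha}\log N\bigr)$, which for all large $N$ is dominated by $\rme^{-\pi dN/\log(2dN/\alpha)}$; enlarging $C$ to absorb the finitely many remaining $N$ finishes the proof.

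The step I expect to be the main obstacle is not any single inequality but ensuring that every constant — above all $K$ in the bound for $g$, and the Lebesgue-constant bound — can be taken independently of $N$, so that the $N$-free DE-Sinc approximation theorem may legitimately be applied to $g$; boundedness of $\DEt(\domD_d)$ and the restriction $0<\alpha\leq1$ are precisely what make this possible. The argument parallels the SE case behind Theorem~\ref{thm:SE-Sinc-general}, the only real change being that the SE endpoint estimate $\tSE_{-N}-a=(b-a)/(1+\rme^{Nh})=\Order(\rme^{-\sqrt{\pi dN/\alpha}})$ is here replaced by the far smaller DE estimate above, so that in the DE case the endpoint terms are dominated by the main error rather than merely comparable to it.
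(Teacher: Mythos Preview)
The paper does not prove this theorem: it is quoted as an external result (Okayama~\cite[Theorem~6]{okayama13:_note}) in the preliminaries, so there is no in-paper proof to compare your proposal against.

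That said, your argument is the natural one and mirrors the structure one would expect from the cited reference: subtract the linear function matching the true endpoint values so that the resulting $g$ obeys~\eqref{eq:LC} with $N$-independent constants, apply the plain DE-Sinc approximation theorem to $g$, and absorb the residual endpoint corrections using the H\"older bound $|f(a)-f(\tDE_{-N})|\leq L(\tDE_{-N}-a)^{\alpha}$ together with the $\Order(\log N)$ Lebesgue constant. The identity $f-\ProjDE f=\tilde f^{\textDE}_N-\mathcal{S}_N[\tilde f^{\textDE}_N]$ and the splitting $\tilde f^{\textDE}_N=g+(f(a)-f(\tDE_{-N}))\omega_a+(f(b)-f(\tDE_N))\omega_b$ are both correct. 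One small caution: your phrase ``bounded Riemann surface $\DEt(\domD_d)$'' is used to bound $|z-a|^{1-\alpha}+|b-z|^{1-\alpha}$ and hence justify the $N$-independence of $K$; this is exactly the standing hypothesis in the paper's definition of $\MC_{\alpha}(\domD)$, so you are entitled to it, but it is worth flagging that this is where the assumption is invoked rather than an automatic geometric fact. With that understood, the outline is sound.
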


\section{Sinc-Nystr\"{o}m methods}
\label{sec:nystroem}

This section describes the Sinc-Nystr\"{o}m methods
developed by Muhammad et al.~\cite{muhammad05:_numer}.
The first method employs the tanh transformation~\eqref{eq:SEt}
as a variable transformation,
while the second method employs the DE transformation~\eqref{eq:DEt}.

\subsection{SE-Sinc-Nystr\"{o}m method}

%Assume that $k(t,\cdot)u(\cdot)$ satisfies
%the assumptions of~\cref{thm:SE-Sinc-indefinite}
%uniformly for $t\in [a,b]$.
By applying the SE-Sinc indefinite integration~\eqref{eq:SE-Sinc-indefinite}
to the integral in the given equation~\eqref{eq:Volterra-int},
we obtain an approximated equation as
\begin{equation}
\label{eq:SE-Sinc-Nystroem}
 \uSEn(t)
= g(t)
+\sum_{j=-N}^N k(t,\tSE_j)\uSEn(\tSE_j)\SEtDiv(jh) J(j,h)(\SEtInv(t)).
\end{equation}
%where $\tSE_j = \SEt(jh)$.
The approximated solution $\uSEn$ is determined
once the unknown coefficients $\uSEn(\tSE_j)$
on the right-hand side are obtained. To this end,
$2N+1$ sampling points are set at
$t=\tSE_i$ $(i=-N,\,-N+1,\,\ldots,\,N)$ in~\eqref{eq:SE-Sinc-Nystroem}
as
\begin{equation}
\label{eq:SE-Sinc-Nystroem-system}
 \uSEn(\tSE_i)
= g(t)
+\sum_{j=-N}^N k(\tSE_i,\tSE_j)\uSEn(\tSE_j)\SEtDiv(jh)
 J(j,h)(ih),
\quad i=-N,\,\ldots,\,N,
\end{equation}
which is a system of linear equations.
This system is expressed in a matrix-vector form as follows.
Let $n=2N+1$,
let $I_n$ be an identity matrix of order $n$,
and let $\kSEn$ be $n\times n$ matrix
whose $(i, j)$-th element is
\[
 \left(\kSEn\right)_{ij}
= k(\tSE_i,\tSE_j)\SEtDiv(jh)h \delta_{i-j}^{(-1)},
%\left(\frac{1}{2}+ \sigma_{i-j}\right),
\quad i= -N,\,\ldots,\,N,\quad j=-N,\,\ldots,\,N,
\]
where $\delta_k^{(-1)} = (1/2) + \sigma_k$, where $\sigma_k$ is defined by
\[
\sigma_k = \int_0^k\frac{\sin(\pi x)}{\pi x}\diff x
=\frac{1}{\pi}\Si(\pi k).
\]
Furthermore, let $\gSEn$ and $\mathbd{u}_n^{\textSE}$
be $n$-dimensional vectors defined by
\begin{align*}
 \gSEn
 &= [g(\tSE_{-N}),\,g(\tSE_{-N+1}),\,\ldots,\,g(\tSE_N)]^{\mathrm{T}},\\
 \mathbd{u}_n^{\textSE}
 &= [\uSEn(\tSE_{-N}),\,\uSEn(\tSE_{-N+1}),\,\ldots,\,\uSEn(\tSE_N)]^{\mathrm{T}}.
\end{align*}
Then, the system~\eqref{eq:SE-Sinc-Nystroem-system} is expressed as
\begin{equation}
\label{eq:SE-Sinc-Nystroem-linear-eq}
 (I_n - \kSEn) \mathbd{u}_n^{\textSE} = \gSEn.
\end{equation}
By solving~\eqref{eq:SE-Sinc-Nystroem-linear-eq},
we obtain the unknown coefficients $\mathbd{u}_n^{\textSE}$,
from which the approximated solution $\uSEn$ is determined
by~\eqref{eq:SE-Sinc-Nystroem}.
This procedure is called the SE-Sinc-Nystr\"{o}m method.
Its convergence theorem was provided as follows.

\begin{theorem}[Okayama et al.~{\cite[Theorem~3.4]{okayama13:_theo}}]
\label{thm:SE-Sinc-Nystroem}
Let $d$ be a positive constant with $d<\pi$.
Assume that $g$, $k(z,\cdot)$ and $k(\cdot,w)$
belong to $\Hinf(\SEt(\domD_d))$ for all $z$, $w\in\SEt(\domD_d)$.
Furthermore, assume that
$g$, $k(t,\cdot)$ and $k(\cdot,s)$ belong to $C([a,b])$
for all $t$, $s\in [a, b]$.
Let $h$ be selected by the formula~\eqref{eq:h-SE} with $\alpha=1$.
Then, there exists a positive integer $N_0$
such that for all $N\geq N_0$,
the coefficient matrix $(I_n - \kSEn)$ is invertible.
Furthermore, there exists a constant $C$ independent of $N$
such that for all $N\geq N_0$,
\[
 \|u - \uSEn\|_{C([a,b])} \leq C \rme^{-\sqrt{\pi d N}}.
\]
\end{theorem}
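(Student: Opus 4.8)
The plan is to treat the Sinc--Nystr\"{o}m method as a perturbation of the exact equation on $C([a,b])$. Write $\Vol_N$ for the operator $(\Vol_N f)(t)=\sum_{j=-N}^{N} k(t,\tSE_j)f(\tSE_j)\SEtDiv(jh)J(j,h)(\SEtInv(t))$, so that~\eqref{eq:SE-Sinc-Nystroem} reads $(\Ident-\Vol_N)\uSEn=g$ whereas the given problem is $(\Ident-\Vol)u=g$. The argument splits into three parts: (i) the exact solution $u$ exists, is unique, and lies in $\Hinf(\SEt(\domD_d))$; (ii) for all sufficiently large $N$, $\Ident-\Vol_N$ is invertible on $C([a,b])$ with $\|(\Ident-\Vol_N)^{-1}\|$ bounded uniformly in $N$, which in turn forces $I_n-\kSEn$ to be nonsingular; and (iii) the identity $(\Ident-\Vol_N)(u-\uSEn)=(\Vol-\Vol_N)u$, together with (i) and (ii), reduces the error estimate to the accuracy of the SE-Sinc indefinite integration. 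For part (i), note that the hypotheses make $k$ bounded on $\SEt(\domD_d)\times\SEt(\domD_d)$, say by $M$, and that $\SEt(\domD_d)$ is a bounded simply-connected domain containing $(a,b)$; for $z$ in it, $\Vol[f](z)=\int_a^z k(z,s)f(s)\diff s$ is taken along a path inside $\SEt(\domD_d)$ whose length can be chosen bounded by a constant $\Lambda$ uniformly in $z$ (e.g.\ the $\SEt$-image of a broken line in $\domD_d$ running from $-\infty$ to $\SEtInv(z)$). Parametrizing these paths and iterating yields $\|\Vol^m f\|_{\Hinf(\SEt(\domD_d))}\le (M\Lambda)^m\|f\|_{\Hinf(\SEt(\domD_d))}/m!$, so the Neumann series $u=\sum_{m\ge 0}\Vol^m g$ converges in $\Hinf(\SEt(\domD_d))$ and gives the unique solution with $\|u\|_{\Hinf(\SEt(\domD_d))}\le\rme^{M\Lambda}\|g\|_{\Hinf(\SEt(\domD_d))}$.

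For part (ii) --- the heart of the proof --- I would split $\Vol_N=\Vol_N^{\mathrm{tri}}+\Vol_N^{\mathrm{err}}$ according to $J(j,h)(x)/h=\tfrac12+\tfrac{1}{\pi}\Si(\pi(x/h-j))$, writing this as a ``Heaviside part'' (equal to $1$ for $x/h-j>0$, to $\tfrac12$ at $0$, and to $0$ otherwise) plus a remainder $\epsilon(x/h-j)$ with $|\epsilon(y)|\le C_0/(1+|y|)$; note that at the nodes the Heaviside part reproduces $\delta_{i-j}^{(-1)}$ exactly up to $\epsilon$. Since $\tSE$ is increasing, evaluating $(\Ident-\Vol_N^{\mathrm{tri}})y=b$ at the collocation nodes produces a lower-triangular linear system whose diagonal entries are $1-\tfrac12 k(\tSE_i,\tSE_i)\SEtDiv(ih)h=1+\Order(h)$, hence solvable; applying the discrete Gronwall inequality with the non-uniform weights $w_j=\SEtDiv(jh)h$ gives $\max_i|y(\tSE_i)|\le C_1\|b\|_{C([a,b])}$, and then the defining formula gives $\|(\Ident-\Vol_N^{\mathrm{tri}})^{-1}\|\le C_1'$ uniformly in $N$. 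The essential point is that $\sum_{j=-N}^{N} w_j$ is a trapezoidal approximation of $\int_{-\infty}^{\infty}\SEtDiv(x)\diff x=b-a$ and is therefore bounded, rather than growing with $2N+1$. On the other hand $\|\Vol_N^{\mathrm{err}}\|_{C([a,b])\to C([a,b])}\le M\,\tfrac{b-a}{4}\,C_0\,h\sup_{\xi\in\mathbb{R}}\sum_{j=-N}^{N}\frac{1}{1+|\xi/h-j|}=\Order(h\log N)\to 0$. Consequently $\Ident-\Vol_N=(\Ident-\Vol_N^{\mathrm{tri}})\left(\Ident-(\Ident-\Vol_N^{\mathrm{tri}})^{-1}\Vol_N^{\mathrm{err}}\right)$ is invertible for $N\ge N_0$ with $\|(\Ident-\Vol_N)^{-1}\|\le 2C_1'$; and because $\Vol_N f$ depends on $f$ only through its node values, a short argument shows that injectivity of $\Ident-\Vol_N$ forces $I_n-\kSEn$ to be invertible.

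For part (iii), from $(\Ident-\Vol_N)(u-\uSEn)=(\Vol-\Vol_N)u$ we get $\|u-\uSEn\|_{C([a,b])}\le 2C_1'\|(\Vol-\Vol_N)u\|_{C([a,b])}$. For each fixed $t\in[a,b]$, the quantity $(\Vol u)(t)-(\Vol_N u)(t)$ is exactly the error of the SE-Sinc indefinite integration applied to $s\mapsto k(t,s)u(s)$, which belongs to $\Hinf(\SEt(\domD_d))$ with norm at most $M\|u\|_{\Hinf(\SEt(\domD_d))}$ independently of $t$; applying Theorem~\ref{thm:SE-Sinc-indefinite} with $\alpha=1$ (so that~\eqref{eq:QC} reduces to boundedness) and $h$ from~\eqref{eq:h-SE} with $\alpha=1$ gives $\|(\Vol-\Vol_N)u\|_{C([a,b])}\le C\rme^{-\sqrt{\pi d N}}$, completing the proof. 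I expect the main obstacle to be part (ii), and within it the uniform bound on $(\Ident-\Vol_N^{\mathrm{tri}})^{-1}$: one must verify that the discrete-Gronwall constant does not deteriorate with $N$ --- which relies on the boundedness of $\sum_j\SEtDiv(jh)h$ rather than on a crude count of the $2N+1$ nodes --- and one must estimate the off-triangular tail $\Vol_N^{\mathrm{err}}$ carefully near the endpoints of $[a,b]$, where $\SEtInv$ is unbounded. A secondary technical point is the path-integral set-up in part (i), which uses that $\SEt(\domD_d)$ is simply connected and that every one of its points can be joined to $a$ by a path of uniformly bounded length.
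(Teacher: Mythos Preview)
The present paper does not prove this theorem: it is quoted from \cite[Theorem~3.4]{okayama13:_theo}, and the only ingredients of its proof that surface here are Theorem~\ref{thm:Sinc-Nyst-regularity} (your part~(i)) and Lemma~\ref{lem:SE-Sinc-Nystroem}, whose conclusion $\|u-\uSEn\|_{C([a,b])}\le C\|\Vol u-\VolSEn u\|_{C([a,b])}$ is exactly the reduction you carry out in part~(iii). So the overall architecture you propose coincides with what the paper invokes. Where you add something the paper does not display is part~(ii), the uniform invertibility of $\Ident-\Vol_N$: your triangular-plus-tail decomposition of $\delta_{i-j}^{(-1)}$, followed by a discrete Gronwall bound on the lower-triangular part and an $O(h\log N)$ estimate on the $\Si$-tail, is a self-contained Volterra-specific route, and its crux --- that $\sum_{j}\SEtDiv(jh)h$ is a convergent trapezoidal sum for $b-a$ rather than an $O(N)$ quantity --- is correctly identified.

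There is, however, a technical slip in how part~(ii) is set up. Your operator $\Vol_N^{\mathrm{tri}}$ is built from the Heaviside function of $\SEtInv(t)/h-j$, which jumps at every collocation point $t=\tSE_j$; hence $\Vol_N^{\mathrm{tri}}$ does not map $C([a,b])$ into itself, and the factorisation $\Ident-\Vol_N=(\Ident-\Vol_N^{\mathrm{tri}})\bigl(\Ident-(\Ident-\Vol_N^{\mathrm{tri}})^{-1}\Vol_N^{\mathrm{err}}\bigr)$ is not well posed on $C([a,b])$. The repair is easy: carry out the splitting at the \emph{matrix} level, writing $\kSEn=T_n+R_n$ with $T_n$ lower triangular (diagonal entries $\tfrac12 k(\tSE_i,\tSE_i)\SEtDiv(ih)h$). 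Your Gronwall argument then gives $\|(I_n-T_n)^{-1}\|_{\infty}\le C_1$ uniformly in $N$, your tail bound gives $\|R_n\|_{\infty}=O(h\log N)\to0$, so $\|(I_n-\kSEn)^{-1}\|_{\infty}$ is uniformly bounded; finally lift to $C([a,b])$ in one step via~\eqref{eq:SE-Sinc-Nystroem}, using $\sup_{x}|J(j,h)(x)|/h\le\tfrac12+\tfrac{1}{\pi}\Si(\pi)$ and the same bound on $\sum_j\SEtDiv(jh)h$. A smaller point in part~(i): the factor $1/m!$ from iterating $\Vol$ on $\Hinf(\SEt(\domD_d))$ only emerges cleanly if, for each $z$, you integrate along a single fixed path from $a$ to $z$ and reuse its sub-arcs for all the inner integrals, so that the $m$-fold integral is genuinely over a simplex in the arc-length parameter; this is implicit in ``parametrising these paths and iterating'' but should be made explicit.
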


\subsection{DE-Sinc-Nystr\"{o}m method}

Muhammad et al.~\cite{muhammad05:_numer} also considered
another method by replacing $\SEt$ with $\DEt$
in the SE-Sinc-Nystr\"{o}m method.
%Assume that $k(t,\cdot)u(\cdot)$ satisfies
%the assumptions of~\cref{thm:DE-Sinc-indefinite}
%uniformly for $t\in [a,b]$.
Applying the DE-Sinc indefinite integration~\eqref{eq:DE-Sinc-indefinite}
to the integral in the given equation~\eqref{eq:Volterra-int},
we obtain an approximated equation as
\begin{equation}
\label{eq:DE-Sinc-Nystroem}
 \uDEn(t)
= g(t)
+\sum_{j=-N}^N k(t,\tDE_j)\uDEn(\tDE_j)\DEtDiv(jh) J(j,h)(\DEtInv(t)).
\end{equation}
%where $\tDE_j = \DEt(jh)$.
The approximated solution $\uDEn$ is determined
once the unknown coefficients $\uDEn(\tDE_j)$
on the right-hand side are obtained. To this end,
$2N+1$ sampling points are set at
$t=\tDE_i$ $(i=-N,\,-N+1,\,\ldots,\,N)$ in~\eqref{eq:DE-Sinc-Nystroem}.
This leads a system of linear equations
\begin{equation}
\label{eq:DE-Sinc-Nystroem-linear-eq}
 (I_n - \kDEn) \mathbd{u}_n^{\textDE} = \gDEn,
\end{equation}
where $\kDEn$ be $n\times n$ matrix
whose $(i, j)$-th element is
\[
 \left(\kDEn\right)_{ij}
= k(\tDE_i,\tDE_j)\DEtDiv(jh)h \delta_{i-j}^{(-1)},
%\left(\frac{1}{2}+ \sigma_{i-j}\right),
\quad i= -N,\,\ldots,\,N,\quad j=-N,\,\ldots,\,N,
\]
and $\gDEn$ and $\mathbd{u}_n^{\textDE}$
be $n$-dimensional vectors defined by
\begin{align*}
 \gDEn
 &= [g(\tDE_{-N}),\,g(\tDE_{-N+1}),\,\ldots,\,g(\tDE_N)]^{\mathrm{T}},\\
 \mathbd{u}_n^{\textDE}
 &= [\uDEn(\tDE_{-N}),\,\uDEn(\tDE_{-N+1}),\,\ldots,\,\uDEn(\tDE_N)]^{\mathrm{T}}.
\end{align*}
By solving~\eqref{eq:DE-Sinc-Nystroem-linear-eq},
we obtain the unknown coefficients $\mathbd{u}_n^{\textDE}$,
from which the approximated solution $\uDEn$ is determined
by~\eqref{eq:DE-Sinc-Nystroem}.
This procedure is called the DE-Sinc-Nystr\"{o}m method.
Its convergence theorem was provided as follows.

\begin{theorem}[Okayama et al.~{\cite[Theorem~3.5]{okayama13:_theo}}]
\label{thm:DE-Sinc-Nystroem}
Let $d$ be a positive constant with $d<\pi/2$.
Assume that $g$, $k(z,\cdot)$ and $k(\cdot,w)$
belong to $\Hinf(\DEt(\domD_d))$ for all $z$, $w\in\DEt(\domD_d)$.
Furthermore, assume that
$g$, $k(t,\cdot)$ and $k(\cdot,s)$ belong to $C([a,b])$
for all $t$, $s\in [a, b]$.
Let $h$ be selected by the formula~\eqref{eq:h-DE} with $\alpha=1$.
Then, there exists a positive integer $N_0$
such that for all $N\geq N_0$,
the coefficient matrix $(I_n - \kDEn)$ is invertible.
Furthermore, there exists a constant $C$ independent of $N$
such that for all $N\geq N_0$,
\[
 \|u - \uDEn\|_{C([a,b])}
 \leq C \frac{\log(2 d N)}{N}\rme^{-\pi d N/log(2 d N)}.
\]
\end{theorem}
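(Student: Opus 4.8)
The plan is to recast the method in operator form and run the usual consistency-plus-stability argument, in parallel with the proof of Theorem~\ref{thm:SE-Sinc-Nystroem} but with the SE ingredients replaced by their DE analogues. Define $\VolDEn:C([a,b])\to C([a,b])$ by $\VolDEn[f](t)=\sum_{j=-N}^{N}k(t,\tDE_j)f(\tDE_j)\DEtDiv(jh)J(j,h)(\DEtInv(t))$, so that the approximate equation~\eqref{eq:DE-Sinc-Nystroem} is $(\Ident-\VolDEn)\uDEn=g$ and its restriction to the nodes $t=\tDE_i$ is exactly the linear system~\eqref{eq:DE-Sinc-Nystroem-linear-eq}. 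Since $(\Ident-\Vol)u=g$, subtraction gives the error identity $(\Ident-\VolDEn)(u-\uDEn)=(\Vol-\VolDEn)u$. It therefore suffices to (i) bound the consistency error $\|(\Vol-\VolDEn)u\|_{C([a,b])}$ and (ii) show that $\Ident-\VolDEn$ is invertible on $C([a,b])$ for $N\ge N_0$ with $\sup_{N\ge N_0}\|(\Ident-\VolDEn)^{-1}\|<\infty$; evaluating the error identity at the nodes then also yields invertibility of $I_n-\kDEn$.

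For (i), fix $t\in[a,b]$ and observe that $(\Vol u)(t)-(\VolDEn u)(t)$ is precisely the error of the DE-Sinc indefinite integration~\eqref{eq:DE-Sinc-indefinite} applied to the integrand $s\mapsto k(t,s)u(s)$. To invoke Theorem~\ref{thm:DE-Sinc-indefinite} with $\alpha=1$, for which condition~\eqref{eq:QC} is merely boundedness on $\DEt(\domD_d)$, I first need the regularity fact $u\in\Hinf(\DEt(\domD_d))$. This comes from the exact equation: on the Riemann surface $\DEt(\domD_d)$ the Volterra operator $\Vol$ is quasi-nilpotent, since the integration paths from $a$ have bounded length and $k$ is bounded there, so that $\|\Vol^m g\|$ decays like $1/m!$; hence $\Ident-\Vol$ is invertible on $\Hinf(\DEt(\domD_d))$ and $u=(\Ident-\Vol)^{-1}g$ inherits analyticity and boundedness from $g$ and $k$, and also $u\in C([a,b])$. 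Then $s\mapsto k(t,s)u(s)$ lies in $\Hinf(\DEt(\domD_d))$ with norm bounded uniformly in $t\in[a,b]$ (the hypotheses on $k$ give $\sup_{t\in[a,b]}\|k(t,\cdot)\|_{\Hinf(\DEt(\domD_d))}<\infty$), and Theorem~\ref{thm:DE-Sinc-indefinite} supplies a constant $C$ independent of both $N$ and $t$ with $\|(\Vol-\VolDEn)u\|_{C([a,b])}\le C\,\frac{\log(2dN)}{N}\rme^{-\pi dN/\log(2dN)}$.

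For (ii), the decisive point is that $\VolDEn$ does \emph{not} converge to $\Vol$ in operator norm, so a naive Neumann-series perturbation of $(\Ident-\Vol)^{-1}$ is unavailable; instead I would use the theory of collectively compact operator approximations. One checks that $\sup_N\|\VolDEn\|_{C([a,b])\to C([a,b])}<\infty$ and that $\{\VolDEn\}$ is collectively compact; both reduce to uniform-in-$N$ bounds on the Sinc weights, namely uniform boundedness and equicontinuity of $J(j,h)$ and boundedness of $\sum_{j=-N}^{N}|\DEtDiv(jh)h\delta_{i-j}^{(-1)}|$ uniformly in $N$ and $i$, where the double-exponential decay of $\DEtDiv(jh)$ is exactly what makes the sums converge uniformly. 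Moreover $\VolDEn f\to\Vol f$ in $C([a,b])$ for every $f$ in the dense subset of polynomials, by Theorem~\ref{thm:DE-Sinc-indefinite} applied to $k(t,\cdot)f(\cdot)\in\Hinf(\DEt(\domD_d))$, and hence, by the uniform bound, for every $f\in C([a,b])$. Since $\Vol$ is a Volterra operator with continuous kernel, $(\Ident-\Vol)^{-1}$ exists on $C([a,b])$; Anselone's theorem on collectively compact approximations then produces $N_0$ such that $(\Ident-\VolDEn)^{-1}$ exists and is uniformly bounded for $N\ge N_0$. Combining with (i) gives $\|u-\uDEn\|_{C([a,b])}\le\|(\Ident-\VolDEn)^{-1}\|\,\|(\Vol-\VolDEn)u\|_{C([a,b])}\le C'\,\frac{\log(2dN)}{N}\rme^{-\pi dN/\log(2dN)}$, which is the claimed estimate.

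The hard part is step (ii): establishing uniform invertibility of $\Ident-\VolDEn$ in the absence of operator-norm convergence. Verifying the collective-compactness and uniform-boundedness hypotheses requires careful, uniform-in-$N$ estimates of the Sinc quantities $J(j,h)$, $\delta_k^{(-1)}$ and $\DEtDiv(jh)$ under the mesh choice~\eqref{eq:h-DE}, matching the double-exponential decay of $\DEtDiv$ against the growth of the number of nodes so that no constant degrades as $h\to0$. A secondary technical point is the regularity claim $u\in\Hinf(\DEt(\domD_d))$ over a Riemann surface rather than a simply-connected domain, where one must track the paths defining the analytic continuation of $\Vol u$ to keep $\Vol$ quasi-nilpotent there. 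Otherwise the argument is routine and essentially identical to the SE case in Theorem~\ref{thm:SE-Sinc-Nystroem}.
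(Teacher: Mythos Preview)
This theorem is not proved in the present paper; it is quoted verbatim from~\cite[Theorem~3.5]{okayama13:_theo}. Your consistency-plus-stability outline is correct and aligns with the ingredients the paper does cite from that same reference: Lemma~\ref{lem:DE-Sinc-Nystroem} is exactly the stability half (unique solvability of~\eqref{eq:DE-Sinc-Nystroem-symbol} for $N\ge N_0$ together with the bound $\|u-\uDEn\|_{C([a,b])}\le C\,\|\Vol u-\VolDEn u\|_{C([a,b])}$), Theorem~\ref{thm:Sinc-Nyst-regularity} supplies the regularity $u\in\Hinf(\DEt(\domD_d))$ you argue for via quasi-nilpotency, and Theorem~\ref{thm:DE-Sinc-indefinite} gives the consistency estimate. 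Your use of Anselone's collectively-compact-operator framework to establish the stability lemma is the standard route for Nystr\"om-type schemes in the absence of operator-norm convergence, and is what the cited reference does; so your proposal is both correct and essentially the same as the original proof.
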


\section{Existing Sinc-collocation methods}
\label{sec:collocation}

This section describes two different Sinc-collocation methods
developed by
Stenger~\cite{stenger93:_numer}
and Rashidinia and Zarebnia~\cite{rashidinia07:_solut}.
Both methods employ the tanh transformation~\eqref{eq:SEt}
as a variable transformation,
but their procedures are not identical.

\subsection{Sinc-collocation method by Stenger}

As explained in Section~\ref{sec:introduction},
Stenger derived his method for~\eqref{eq:Volterra-initial-val},
where the kernel $\tilde{k}$ is a function of a single variable.
However, his method can be easily derived for~\eqref{eq:Volterra-int}
as follows.
His method is closely related to the SE-Sinc-Nystr\"{o}m method,
which is described in the previous section.
First, solve the linear system~\eqref{eq:SE-Sinc-Nystroem-linear-eq}
and obtain~$\mathbd{u}_n^{\textSE}$.
Then, his approximated solution $\vSEn$ is expressed as
the generalized SE-Sinc approximation of $\uSEn$, i.e.,
\begin{align}
 \vSEn(t)
 &= \ProjSE[\uSEn](t)\nonumber\\
 &= \uSEn(\tSE_{-N})\omega_a(t) + \uSEn(\tSE_{N})\omega_b(t) +
\sum_{j=-N}^N
\left\{\uSEn(\tSE_j) - \uSEn(\tSE_{-N})\omega_a(\tSE_j)
 - \uSEn(\tSE_N)\omega_b(\tSE_j)\right\}S(j,h)(\SEtInv(t)) ,
\label{eq:SE-Sinc-collocation}
\end{align}
where $\ProjSE$ is defined by~\eqref{eq:ProjSE}.

The solution $\vSEn$ is also obtained
by the standard collocation procedure as follows.
Set the approximate solution $\vSEn$ as~\eqref{eq:SE-Sinc-collocation},
where $\uSEn(\tSE_j)$ $(j=-N,\,\ldots,\,N)$ are
regarded as unknown coefficients.
%\begin{align*}
% \vSEn(t) =
% \sum_{j=-N}^N\left\{
% u_j - u_{-N}\omega_a(\tSE_j) - u_N\omega_b(\tSE_j)
% \right\}S(j,h)(\SEtInv(t))
% +u_{-N}\omega_a(t) + u_N\omega_b(t).
%\end{align*}
Substitute $\vSEn$ into the given equation~\eqref{eq:Volterra-int},
with approximating the Volterra integral operator $\Vol$ by $\VolSEn$, where
\begin{equation}
\label{eq:VolSEn}
 \VolSEn [f](t)
=\sum_{j=-N}^N k(t,\tSE_j)f(\tSE_j)\SEtDiv(jh)J(j,h)(\SEtInv(t)),
\end{equation}
which is the SE-Sinc indefinite integration of $\Vol f$.
Then, setting $n=2N+1$ sampling points at $t=\tSE_i$
$(i=-N,\,-N+1,\,\ldots,\,N)$, we obtain
the same system of linear equations as~\eqref{eq:SE-Sinc-Nystroem-linear-eq}.
%\[
% \sum_{j=-N}^N
%\left\{\delta_{ij} - k(\tSE_i,\tSE_j)\SEtDiv(jh)h\delta_{i-j}^{(-1)}\right\}
% u_j = g(\tSE_i), \quad i=-N,\,\ldots,\,N,
%\]
%where $\delta_{ij}$ is the Kronecker delta.
%This system of linear equations is
%nothing but~\eqref{eq:SE-Sinc-Nystroem-linear-eq}.
Thus, Stenger's method can be regarded as a collocation method
utilizing the generalized SE-Sinc approximation,
namely, SE-Sinc-collocation method.

\subsection{Sinc-collocation method by Rashidinia and Zarebnia}

Rashidinia and Zarebnia derived their method
by the standard collocation procedure,
but they considered their approximated solution $\vRZn$
in different manners in the following four cases.
\begin{enumerate}
 \item[(I)] If $u(a)=u(b)=0$, set $\vRZn$ as
\[
 \vRZn(t) = \sum_{j=-N}^{N} c_{j} S(j,h)(\SEtInv(t)).
\]
 \item[(II)] If $u(a)\neq 0$ and $u(b)=0$, set $\vRZn$ as
\[
 \vRZn(t) = c_{-N}\omega_a(t) + \sum_{j=-N+1}^{N} c_{j} S(j,h)(\SEtInv(t)).
\]
 \item[(III)] If $u(a)= 0$ and $u(b)\neq 0$, set $\vRZn$ as
\[
 \vRZn(t) = \sum_{j=-N}^{N-1} c_{j} S(j,h)(\SEtInv(t)) + c_{N}\omega_b(t).
\]
 \item[(IV)] If $u(a)\neq 0$ and $u(b)\neq 0$, set $\vRZn$ as
\end{enumerate}
\begin{equation}
\label{eq:vRZn}
 \vRZn(t) = c_{-N}\omega_a(t)
 + \sum_{j=-N+1}^{N-1} c_{j} S(j,h)(\SEtInv(t)) + c_{N}\omega_b(t).
\end{equation}
To obtain the unknown coefficients
$\mathbd{c}_{n} = [c_{-N},\,c_{-N+1},\,\ldots,\,c_{N}]^{\mathrm{T}}$,
where $n=2N+1$,
they substituted $\vRZn$ into the given equation~\eqref{eq:Volterra-int},
with approximating the Volterra integral operator $\Vol$ by $\VolSEn$.
Then, setting $n$ sampling points at $t=\tSE_i$
$(i=-N,\,-N+1,\,\ldots,\,N)$,
they derived a system of linear equations
in each of the four cases: (I)--(IV).
For example, in the case (I), the resulting system is expressed as
\[
 (I_n - V_{n}^{\textSE})\mathbd{c}_n = \mathbd{g}_n^{\textSE}.
\]
Particularly, in the case (IV), the resulting system is expressed as
\begin{equation}
\label{eq:RZ-linear-eq}
 (E_n^{\textRZ} - V_{n}^{\textRZ})\mathbd{c}_n = \mathbd{g}_n^{\textSE},
\end{equation}
where $E_n^{\textRZ}$ and $V_n^{\textRZ}$ are
$n\times n$ matrices defined by
\begin{align*}
 E_n^{\textRZ}
&= \left[
   \begin{array}{@{\,}c|ccc|c@{\,}}
   \omega_a(\tSE_{-N})        & 0    & \cdots &0  & \omega_b(\tSE_{-N}) \\
%   \hline
   \omega_a(\tSE_{-N+1}) & 1      &        &\Order & \omega_b(\tSE_{-N+1})\\
   \vdots     &        & \ddots &       & \vdots \\
   \omega_a(\tSE_{N-1}) & \Order &        &1      & \omega_b(\tSE_{N-1}) \\
%   \hline
   \omega_a(\tSE_{N}) & 0      & \cdots &0      & \omega_b(\tSE_{N})
   \end{array}
   \right], \\
  V_m^{\textRZ}
&= \left[
   \begin{array}{@{\,}l|clc|l@{\,}}
%   \VolSEn[\omega_a](\tSE_{-N-1})
   &\cdots
   & k(\tSE_{-N},\tSE_j)\SEtDiv(jh) h \delta_{-N-j}^{(-1)}
   &\cdots
   & \\ %\VolSEn[\omega_b](\tSE_{-N-1}) \\
%  \hline
%   \VolSEn[\omega_a](\tSE_{-N})
   &\cdots
   & k(\tSE_{-N+1},\tSE_j)\SEtDiv(jh) h \delta_{-N+1-j}^{(-1)}
   &\cdots
   & \\ %\VolSEn[\omega_b](\tSE_{-N}) \\
    \multicolumn{1}{c|}{\mathbd{p}_n^{\textRZ}} & & \multicolumn{1}{c}{\vdots}
   & & \multicolumn{1}{c}{\mathbd{q}_n^{\textRZ}}\\
%   \VolSEn[\omega_a](\tSE_{N})
   &\cdots
   & k(\tSE_{N-1},\tSE_j)\SEtDiv(jh) h \delta_{N-1-j}^{(-1)}
   &\cdots
   & \\ %\VolSEn[\omega_b](\tSE_N) \\
%  \hline
%   \VolSEn[\omega_a](\tSE_{N+1})
   &\cdots
   & k(\tSE_{N},\tSE_j)\SEtDiv(jh) h \delta_{N-j}^{(-1)}
   &\cdots
   & %\VolSEn[\omega_b](\tSE_{N+1})
   \end{array}
   \right],
\end{align*}
where $\mathbd{p}_n^{\textRZ}$
and $\mathbd{q}_n^{\textRZ}$
are $n$-dimensional vectors defined by
\begin{align*}
\mathbd{p}_n^{\textRZ}
&= [ \VolSEn[\omega_a](\tSE_{-N}),\,
\VolSEn[\omega_a](\tSE_{-N+1}),\,\ldots,\,
\VolSEn[\omega_a](\tSE_{N})]^{\mathrm{T}}, \\
\mathbd{q}_n^{\textRZ}
&= [ \VolSEn[\omega_b](\tSE_{-N}),\,
\VolSEn[\omega_b](\tSE_{-N+1}),\,\ldots,\,
\VolSEn[\omega_b](\tSE_{N})]^{\mathrm{T}}.
\end{align*}
This is the SE-Sinc-collocation method by Rashidinia and Zarebnia.
In the case (I), the following error analysis was provided.

\begin{theorem}[Zarebnia and Rashidinia~{\cite[Theorem~3]{zarebnia10:_conv}}]
\label{thm:Rarebnia-Rashidinia}
Let $\alpha$ and $d$ be positive constants with $d<\pi$.
Assume that the solution $u$ in~\eqref{eq:Volterra-int}
satisfies all the assumptions in Theorem~\ref{thm:SE-Sinc-approx}.
Furthermore, assume that $k(t,\cdot)$
satisfies all the assumptions in Theorem~\ref{thm:SE-Sinc-indefinite}
for all $t\in [a, b]$.
Then, there exists a constant $C$ independent of $N$ such that
\[
 \|u - \vRZn\|_{C([a,b])}
\leq C \|(I_n- V_{n}^{\textSE})^{-1}\|_2\sqrt{N}\rme^{-\sqrt{\pi d \alpha N}}.
\]
\end{theorem}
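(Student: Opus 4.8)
The plan is to compare $\vRZn$ in case (I) with the plain SE-Sinc interpolant of the exact solution $u$, thereby reducing the estimate to Theorems~\ref{thm:SE-Sinc-approx} and~\ref{thm:SE-Sinc-indefinite} together with a stability bound for the collocation linear system. In case (I) one has $\vRZn(t)=\sum_{j=-N}^{N}c_j S(j,h)(\SEtInv(t))$, where $\mathbd{c}_n=[c_{-N},\dots,c_N]^{\mathrm{T}}$ solves $(I_n-V_n^{\textSE})\mathbd{c}_n=\gSEn$, $n=2N+1$. Set $\mathbd{c}_n^{*}=[u(\tSE_{-N}),\dots,u(\tSE_N)]^{\mathrm{T}}$ and let $\mathcal{S}_N u(t)=\sum_{j=-N}^{N}u(\tSE_j)S(j,h)(\SEtInv(t))$ be the interpolant generated by $\mathbd{c}_n^{*}$. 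I would split $\|u-\vRZn\|_{C([a,b])}\le\|u-\mathcal{S}_N u\|_{C([a,b])}+\|\mathcal{S}_N u-\vRZn\|_{C([a,b])}$ and bound the two pieces separately.

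The first piece is immediate from Theorem~\ref{thm:SE-Sinc-approx}, whose hypotheses $u$ is assumed to satisfy (in particular $u(a)=u(b)=0$, which is exactly case (I)): $\|u-\mathcal{S}_N u\|_{C([a,b])}\le C\sqrt{N}\,\rme^{-\sqrt{\pi d\alpha N}}$. For the second piece, write $\mathcal{S}_N u-\vRZn=\sum_{j=-N}^{N}\bigl(u(\tSE_j)-c_j\bigr)S(j,h)(\SEtInv(\cdot))$; the Cauchy--Schwarz inequality together with the elementary identity $\sum_{j=-\infty}^{\infty}S(j,h)(x)^2=1$ (so any truncated sum is $\le1$) gives $\|\mathcal{S}_N u-\vRZn\|_{C([a,b])}\le\|\mathbd{c}_n^{*}-\mathbd{c}_n\|_2$. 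Since $\mathbd{c}_n$ solves the system exactly, $(I_n-V_n^{\textSE})(\mathbd{c}_n^{*}-\mathbd{c}_n)=\mathbd{r}_n$ with $\mathbd{r}_n=(I_n-V_n^{\textSE})\mathbd{c}_n^{*}-\gSEn$, so $\|\mathbd{c}_n^{*}-\mathbd{c}_n\|_2\le\|(I_n-V_n^{\textSE})^{-1}\|_2\,\|\mathbd{r}_n\|_2$. The crucial observation is that the $i$-th entry of $\mathbd{r}_n$ is $u(\tSE_i)-\VolSEn[u](\tSE_i)-g(\tSE_i)$, and subtracting equation~\eqref{eq:Volterra-int} at $t=\tSE_i$ turns this into $(\mathbd{r}_n)_i=\Vol[u](\tSE_i)-\VolSEn[u](\tSE_i)$, i.e.\ precisely the error of the SE-Sinc indefinite integration~\eqref{eq:SE-Sinc-indefinite} for the integrand $s\mapsto k(\tSE_i,s)u(s)$ evaluated at $t=\tSE_i$. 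This integrand is analytic on $\SEt(\domD_d)$, and from $|u(z)|\le K|z-a|^{\alpha}|b-z|^{\alpha}$ and $|k(\tSE_i,z)|\le K|z-a|^{\alpha-1}|b-z|^{\alpha-1}$ together with the boundedness of $\SEt(\domD_d)$ its product satisfies~\eqref{eq:QC} with a constant independent of $i$; hence Theorem~\ref{thm:SE-Sinc-indefinite} yields $|(\mathbd{r}_n)_i|\le C\,\rme^{-\sqrt{\pi d\alpha N}}$ uniformly in $i$, so that $\|\mathbd{r}_n\|_2\le\sqrt{n}\,C\,\rme^{-\sqrt{\pi d\alpha N}}\le C'\sqrt{N}\,\rme^{-\sqrt{\pi d\alpha N}}$. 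Collecting the two pieces gives the claim.

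I do not expect a deep obstacle: the theorem is intentionally conditional, and the factor $\|(I_n-V_n^{\textSE})^{-1}\|_2$ is exactly what goes uncontrolled here -- proving its uniform boundedness in $N$ (which would promote this into an unconditional root-exponential estimate) is the genuinely hard question and is left open. The only points needing mild attention are: (i) merging the two pieces into the single-factor form of the statement, which uses a routine lower bound $\|(I_n-V_n^{\textSE})^{-1}\|_2\ge c>0$ for large $N$ (obtained by testing $(I_n-V_n^{\textSE})^{-1}$ on the coordinate vector indexed by $-N$, since the corresponding column of $V_n^{\textSE}$ vanishes as $N\to\infty$ thanks to the exponentially decaying Jacobian factor $\SEtDiv(-Nh)$); and (ii) the uniform-in-$i$ verification of the hypotheses of Theorem~\ref{thm:SE-Sinc-indefinite} for the family $\{k(\tSE_i,\cdot)u(\cdot)\}$, which only requires interpreting ``$k(t,\cdot)$ satisfies the assumptions of Theorem~\ref{thm:SE-Sinc-indefinite} for all $t\in[a,b]$'' as providing constants $K,\alpha$ independent of $t$.
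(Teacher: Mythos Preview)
The paper does not contain its own proof of this statement: Theorem~\ref{thm:Rarebnia-Rashidinia} is quoted verbatim from Zarebnia--Rashidinia and is used only as background, precisely to point out that the factor $\|(I_n-V_n^{\textSE})^{-1}\|_2$ was left unestimated there. So there is nothing in the present paper to compare your argument against.

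That said, your proof is correct and is the natural (and almost certainly the original) route. The decomposition into the SE-Sinc interpolation error of $u$ plus a discrete stability factor times the residual is exactly how such conditional collocation estimates are obtained; the residual identification $(\mathbd{r}_n)_i=\Vol[u](\tSE_i)-\VolSEn[u](\tSE_i)$ is right, the use of $\sum_j S(j,h)(x)^2\le 1$ to pass from $\ell^2$ to $C([a,b])$ is clean, and your remark that the product $k(\tSE_i,\cdot)u(\cdot)$ satisfies~\eqref{eq:QC} with the \emph{same} $\alpha$ (via boundedness of $\SEt(\domD_d)$, so that the extra factor $|z-a|^{\alpha}|b-z|^{\alpha}$ is harmless) is the right way to invoke Theorem~\ref{thm:SE-Sinc-indefinite} with the prescribed $h$. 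Your absorption of the interpolation term via a lower bound $\|(I_n-V_n^{\textSE})^{-1}\|_2\ge c>0$ is also fine; the column argument works once you phrase it as $\|(I_n-V_n^{\textSE})e_{-N}\|_2\to 1$, hence $\|(I_n-V_n^{\textSE})^{-1}\|_2\ge \|e_{-N}\|_2/\|(I_n-V_n^{\textSE})e_{-N}\|_2\to 1$.

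The only caveat worth recording is the one you already flag: the hypothesis ``$k(t,\cdot)$ satisfies the assumptions of Theorem~\ref{thm:SE-Sinc-indefinite} for all $t\in[a,b]$'' must be read with constants uniform in $t$, otherwise the bound on $\|\mathbd{r}_n\|_2$ does not follow. This is an issue with the original statement, not with your argument.
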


However,
this theorem does not prove the convergence of $\vRZn$,
because there exists an unestimated
term $\|(I_n - V_{n}^{\textSE})^{-1}\|_2$, which clearly depends on $N$.
For the cases (II)--(IV), no error analysis has been provided thus far.

Moreover,
in a practical situation,
it is hard to determine whether $u$ is zero or not at the endpoints.
This is because the solution $u$ is an unknown function to be determined.
The idea to address the issue was presented
for Fredholm integral equations~\cite{okayama1x:_improv};
set the approximate solution $\vRZn$ as~\eqref{eq:vRZn} in any cases.
In other words, we may treat the case (IV) as a general case.
This idea can be employed
for Volterra integral equations~\eqref{eq:Volterra-int}.
Therefore, as a method by Rashidinia and Zarebnia,
this study adopts the following procedure:
(i) solve the linear system~\eqref{eq:RZ-linear-eq},
and (ii) obtain the approximate solution by~\eqref{eq:vRZn}.

\subsection{Main result 1: Relation between the two methods and their convergence}

Any relation between Stenger's method $(\vSEn)$
and Rashidinia--Zarebnia's method $(\vRZn)$ has not been investigated thus far.
Furthermore, convergence of the two methods has not been rigorously proved.
As a first contribution of this paper,
we show the relation between the two methods as follows.
The proof is provided in Section~\ref{sec:proof-equivalence}.

\begin{theorem}
\label{thm:equivalence}
%Let $\alpha$ and $d$ be positive constants with
%$\alpha\leq 1$ and $d<\pi$.
%Assume that
%all the assumptions on $g$ and $k$ in~\cref{thm:SE-Sinc-Nystroem}
%are fulfilled.
%Let $h$ be selected by the formula~\eqref{eq:h-SE}.
%Then, there exists a positive integer $N_0$
%such that for all $N\geq N_0$,
%$\vSEn = \vRZn$ holds.
Let $\vSEn$ be a function defined by~\eqref{eq:SE-Sinc-collocation},
where $\mathbd{u}_n^{\textSE}$ is determined
by solving the linear system~\eqref{eq:SE-Sinc-Nystroem-linear-eq}.
Furthermore,
let $\vRZn$ be a function defined by~\eqref{eq:vRZn},
where $\mathbd{c}_{m}$ is determined
by solving the linear system~\eqref{eq:RZ-linear-eq}.
Then, it holds that
\[
 \vSEn(\tSE_i) = \vRZn(\tSE_i),\quad i=-N,\,-N+1,\,\ldots,\,N,
\]
but generally $\vSEn\neq \vRZn$.
\end{theorem}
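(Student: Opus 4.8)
The plan is to prove the two assertions separately: first that $\vSEn$ and $\vRZn$ agree at the collocation points $\tSE_i$, and then to exhibit why they cannot coincide as functions in general.

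For the first assertion, I would exploit the interpolation property of $\ProjSE$ recorded right after~\eqref{eq:ProjSE}. Since $\vSEn = \ProjSE[\uSEn]$, we immediately get $\vSEn(\tSE_i) = \uSEn(\tSE_i)$ for $i=-N,\ldots,N$, i.e.\ the values of Stenger's solution at the nodes are exactly the entries of $\mathbd{u}_n^{\textSE}$, the solution of~\eqref{eq:SE-Sinc-Nystroem-linear-eq}. On the other side, I would evaluate $\vRZn$ from~\eqref{eq:vRZn} at $t=\tSE_i$: using $S(j,h)(\SEtInv(\tSE_i)) = S(j,h)(ih) = \delta_{ij}$ together with the boundary values $\omega_a(\tSE_{-N})=1$, $\omega_b(\tSE_{-N})=0$ (and symmetrically at $t=\tSE_N$), one finds $\vRZn(\tSE_i) = (E_n^{\textRZ}\mathbd{c}_n)_i$ for every $i$. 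So the claim $\vSEn(\tSE_i)=\vRZn(\tSE_i)$ reduces to showing that $E_n^{\textRZ}\mathbd{c}_n = \mathbd{u}_n^{\textSE}$, where $\mathbd{c}_n$ solves~\eqref{eq:RZ-linear-eq}. Here I would change variables in the linear system: set $\mathbd{y}_n := E_n^{\textRZ}\mathbd{c}_n$ and check that~\eqref{eq:RZ-linear-eq} becomes $(I_n - \kSEn)\mathbd{y}_n = \gSEn$, which is exactly~\eqref{eq:SE-Sinc-Nystroem-linear-eq}; by uniqueness of the solution of that system (on the relevant range of $N$, or simply where the matrix is invertible) we get $\mathbd{y}_n = \mathbd{u}_n^{\textSE}$. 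The identity $(I_n - \kSEn)E_n^{\textRZ} = E_n^{\textRZ} - V_n^{\textRZ}$ is the key algebraic fact: the columns of $V_n^{\textRZ}$ indexed by the interior $j$ are just the corresponding columns of $\kSEn$ composed with $E_n^{\textRZ}$ (since on those columns $E_n^{\textRZ}$ acts essentially as the identity through its middle block), while the first and last columns of $V_n^{\textRZ}$ are $\mathbd{p}_n^{\textRZ} = \VolSEn[\omega_a]$ and $\mathbd{q}_n^{\textRZ} = \VolSEn[\omega_b]$ sampled at the nodes, which is precisely what $\kSEn$ produces after the endpoint columns of $E_n^{\textRZ}$ spread a node value into the $\omega_a,\omega_b$ combination. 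Writing out the $(i,j)$ entries on both sides and matching them using the definition of $\delta_k^{(-1)}$ and $J(j,h)(ih) = h\delta_{i-j}^{(-1)}$ completes this step.

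For the second assertion, that $\vSEn \neq \vRZn$ in general, I would argue that the two interpolants differ \emph{between} the nodes even though they agree at them. The difference $\vSEn - \vRZn$ vanishes at all $2N+1$ points $\tSE_i$ but is a nonzero element of the $(2N+1)$-dimensional span generated by $\{\omega_a, \omega_b, S(-N,h)\circ\SEtInv, \ldots, S(N,h)\circ\SEtInv\}$ minus\,/\,plus the respective basis endpoint terms; concretely, $\vSEn$ uses the basis $\{S(j,h)\circ\SEtInv\}_{j=-N}^N$ augmented additively by $\omega_a,\omega_b$ (through the $\ProjSE$ construction), whereas $\vRZn$ \emph{replaces} the extreme Sinc basis functions $S(\pm N,h)\circ\SEtInv$ by $\omega_a,\omega_b$. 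These two $(2N+1)$-dimensional spaces of functions are not the same subspace of $C([a,b])$, so a function interpolating the same data in each will generically be different. The cleanest way to make this rigorous is to produce one explicit example: take, say, $[a,b]=[0,1]$, a small $N$ (e.g.\ $N=1$), and a simple kernel/right-hand side, compute both $\vSEn$ and $\vRZn$ in closed form, and evaluate them at a non-node point (the midpoint, or $t$ corresponding to $x=h/2$) to see that the values differ; alternatively, argue abstractly that since $S(N,h)(\SEtInv(t))$ is not a linear combination of $\omega_a,\omega_b$ and the other Sinc functions, the coefficient of $S(N,h)$ in $\vSEn$ — which is $\uSEn(\tSE_N) - \uSEn(\tSE_{-N})\omega_a(\tSE_N) - \uSEn(\tSE_N)\omega_b(\tSE_N) = \uSEn(\tSE_N)(1-\omega_b(\tSE_N)) - \uSEn(\tSE_{-N})\omega_a(\tSE_N)$, which is generically nonzero — cannot be matched by $\vRZn$, forcing $\vSEn \neq \vRZn$.

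The main obstacle I anticipate is the bookkeeping in the identity $(I_n - \kSEn)E_n^{\textRZ} = E_n^{\textRZ} - V_n^{\textRZ}$: one must carefully track how the first and last columns of $E_n^{\textRZ}$ (the $\omega_a,\omega_b$ columns) interact with $\kSEn$ and verify that the product reproduces exactly the vectors $\mathbd{p}_n^{\textRZ}$ and $\mathbd{q}_n^{\textRZ}$, i.e.\ that $\kSEn$ applied to the sampled-$\omega_a$ vector equals $\VolSEn[\omega_a]$ sampled at the nodes. This is true essentially by the linearity of $\VolSEn$ in~\eqref{eq:VolSEn} and the fact that $\VolSEn[f](\tSE_i)$ depends on $f$ only through its node values $f(\tSE_j)$ — so $\VolSEn[\omega_a](\tSE_i) = \sum_j k(\tSE_i,\tSE_j)\omega_a(\tSE_j)\SEtDiv(jh)J(j,h)(ih) = (\kSEn\,[\omega_a(\tSE_j)]_j)_i$ — but stating and checking it precisely for the mixed block structure of $E_n^{\textRZ}$ (identity in the middle, $\omega_a,\omega_b$ on the edges) is where the care is needed. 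Everything else is a routine combination of the interpolation property of $\ProjSE$, the cardinal property $S(j,h)(ih)=\delta_{ij}$, and uniqueness of solutions to the linear system.
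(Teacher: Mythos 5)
Your proposal is correct, but it reaches the conclusion by a genuinely different route from the paper. The paper works at the operator level: it introduces a second interpolation operator $\ProjRZ$ (with endpoint coefficients $\beta_N,\gamma_N$), proves through a chain of lemmas that $\vSEn=\ProjSE\uSEn$ and $\vRZn=\ProjRZ\uSEn$, obtains node agreement from the shared interpolation property $\ProjSE[f](\tSE_i)=f(\tSE_i)=\ProjRZ[f](\tSE_i)$, and obtains non-equivalence from the differing endpoint limits $\lim_{t\to a}\ProjSE[f](t)=f(\tSE_{-N})\neq\beta_N=\lim_{t\to a}\ProjRZ[f](t)$. You instead work at the matrix level via the factorization $E_n^{\textRZ}-V_n^{\textRZ}=(I_n-\kSEn)E_n^{\textRZ}$, which is correct: the interior columns of $E_n^{\textRZ}$ are standard basis vectors, so $\kSEn E_n^{\textRZ}$ reproduces the interior columns of $V_n^{\textRZ}$, while the extreme columns give $\kSEn$ applied to the sampled $\omega_a$, $\omega_b$, which is exactly $\mathbd{p}_n^{\textRZ}$, $\mathbd{q}_n^{\textRZ}$ by linearity of $\VolSEn$ and the fact that $\VolSEn[f](\tSE_i)$ depends on $f$ only through its node values. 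This yields $E_n^{\textRZ}\mathbd{c}_n=\mathbd{u}_n^{\textSE}$ directly and is shorter and more self-contained than the paper's argument; what the paper's operator framing buys is reuse, since $\vRZn=\ProjRZ\uSEn$ and the operator equations feed directly into the convergence proof of Theorem~\ref{thm:RZ-Sinc-collocation}. Your non-equivalence argument via the coefficient of $S(N,h)\circ\SEtInv$ also works and is essentially equivalent to the paper's: that coefficient equals $\omega_a(\tSE_N)\bigl(\uSEn(\tSE_N)-\uSEn(\tSE_{-N})\bigr)$, so both arguments reduce to the same generic condition $\uSEn(\tSE_N)\neq\uSEn(\tSE_{-N})$; to make it fully rigorous you should state explicitly that the $2N+3$ functions $\omega_a,\omega_b,S(j,h)\circ\SEtInv$ are linearly independent (which follows by letting $t\to a$, $t\to b$ and then evaluating at the nodes).

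Two small corrections. First, $\omega_a(\tSE_{-N})\neq 1$ and $\omega_b(\tSE_{-N})\neq 0$: the node $\tSE_{-N}=\SEt(-Nh)$ lies strictly inside $(a,b)$, so these values are only close to $1$ and $0$. This slip is harmless because the identity $\vRZn(\tSE_i)=(E_n^{\textRZ}\mathbd{c}_n)_i$ follows from $S(j,h)(ih)=\delta_{ij}$ alone, with the corner entries of $E_n^{\textRZ}$ being precisely $\omega_a(\tSE_{\pm N})$, $\omega_b(\tSE_{\pm N})$ as the paper defines them. Second, your change of variables requires $E_n^{\textRZ}\mathbd{c}_n=\mathbd{u}_n^{\textSE}$ to follow from $(I_n-\kSEn)(E_n^{\textRZ}\mathbd{c}_n)=\gSEn$, which uses the invertibility of $I_n-\kSEn$ (guaranteed for large $N$ by the paper's Lemma~\ref{lem:SE-Sinc-Nystroem} and implicit in the theorem's phrase ``determined by solving''); it is worth stating this hypothesis explicitly, and noting that solvability of the RZ system is then equivalent to it since $\det E_n^{\textRZ}=\tanh(Nh/2)\neq 0$.
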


%Therefore, we may refer to both methods as the SE-Sinc-collocation method.
Subsequently, we provide the convergence theorems of
the two methods as follows.
Their proofs are provided in Sections~\ref{sec:proof-SE-Sinc}
and~\ref{sec:proof-RZ-Sinc}.

\begin{theorem}
\label{thm:SE-Sinc-collocation}
Let $\alpha$ and $d$ be positive constants with
$\alpha\leq 1$ and $d<\pi$.
Assume that
all the assumptions on $g$ and $k$ in Theorem~\ref{thm:SE-Sinc-Nystroem}
are fulfilled.
Furthermore, assume that $g$ and $k(\cdot,w)$
belong to $\MC_{\alpha}(\SEt(\domD_d))$ for all $w\in\SEt(\domD_d)$.
Let $h$ be selected by the formula~\eqref{eq:h-SE}.
Then, there exists a positive integer $N_0$
such that for all $N\geq N_0$,
the coefficient matrix $(I_n - \kSEn)$ is invertible.
Furthermore, there exists a constant $C$ independent of $N$
such that for all $N\geq N_0$,
\[
 \|u - \vSEn\|_{C([a,b])} \leq C \sqrt{N} \rme^{-\sqrt{\pi d \alpha N}}.
\]
\end{theorem}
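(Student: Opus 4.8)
The plan is to compare Stenger's solution $\vSEn$ with the exact solution $u$ by inserting the SE-Sinc-Nystr\"{o}m solution $\uSEn$. Since $\vSEn = \ProjSE[\uSEn]$ by~\eqref{eq:SE-Sinc-collocation} and $\ProjSE$ is linear, we have the splitting
\[
 u - \vSEn = (u - \ProjSE u) + \ProjSE(u - \uSEn).
\]
The first term is controlled by the generalized SE-Sinc approximation theory (Theorem~\ref{thm:SE-Sinc-general}), provided $u\in\MC_{\alpha}(\SEt(\domD_d))$; the second term is controlled by the Lebesgue constant of $\ProjSE$ times the Nystr\"{o}m error $\|u-\uSEn\|_{C([a,b])}$, for which a bound of order $\rme^{-\sqrt{\pi d\alpha N}}$ must be established for the mesh~\eqref{eq:h-SE}.

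I would first record, perhaps as a lemma, that $u\in\MC_{\alpha}(\SEt(\domD_d))$. The analyticity and boundedness of $u$ on the bounded domain $\SEt(\domD_d)$ are already part of the setting of Theorem~\ref{thm:SE-Sinc-Nystroem}: because $\Vol$ is quasi-nilpotent, $u = \sum_{m\geq 0}\Vol^{m}g$ and this series converges in $\Hinf(\SEt(\domD_d))$. For the endpoint behaviour, write $u = g + \Vol u$: near $z=a$, $|\Vol[u](z) - \Vol[u](a)| = \bigl|\int_a^z k(z,s)u(s)\diff s\bigr| = \Order(|z-a|)$, and near $z=b$ the analytic (hence locally Lipschitz) dependence of $k$ on its first variable gives $|\Vol[u](b) - \Vol[u](z)| = \Order(|b-z|)$, so $\Vol u$ is Lipschitz at both endpoints. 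Combined with $g\in\MC_{\alpha}(\SEt(\domD_d))$, this yields $u\in\MC_{\alpha}(\SEt(\domD_d))$, and Theorem~\ref{thm:SE-Sinc-general} bounds the first term by $C\sqrt{N}\,\rme^{-\sqrt{\pi d\alpha N}}$.

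For the second term, bound $\|\ProjSE(u-\uSEn)\|_{C([a,b])}\leq\Lambda_N\|u-\uSEn\|_{C([a,b])}$, where $\Lambda_N$ is the Lebesgue constant of $\ProjSE$; the truncated Sinc cardinal series has Lebesgue constant of order $\log N$ and the two endpoint corrections add only $\Order(1)$, so $\Lambda_N=\Order(\log N)$, which is dominated by $\sqrt{N}$. To bound $\|u-\uSEn\|_{C([a,b])}$, subtract $\uSEn = g + \VolSEn\uSEn$ from $u = g + \Vol u$ to get $(\Ident - \VolSEn)(u - \uSEn) = (\Vol - \VolSEn)u$. The right-hand side is exactly the error of the SE-Sinc indefinite integration (cf.~\eqref{eq:SE-Sinc-indefinite},~\eqref{eq:VolSEn}) applied to the integrand $s\mapsto k(t,s)u(s)$; this integrand is analytic and bounded on $\SEt(\domD_d)$, uniformly in $t\in[a,b]$, and on the bounded domain $\SEt(\domD_d)$ a bounded analytic function satisfies~\eqref{eq:QC} for every $\alpha\in(0,1]$, so Theorem~\ref{thm:SE-Sinc-indefinite} gives $\|(\Vol - \VolSEn)u\|_{C([a,b])}\leq C\rme^{-\sqrt{\pi d\alpha N}}$. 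Finally, since $\Vol$ is a Volterra operator, $\Ident - \Vol$ is boundedly invertible on $C([a,b])$, and --- by the perturbation argument used in the proof of Theorem~\ref{thm:SE-Sinc-Nystroem} --- $\Ident - \VolSEn$ is invertible with $\|(\Ident - \VolSEn)^{-1}\|$ bounded uniformly in $N$ for $N\geq N_0$; this is equivalent to the invertibility of $(I_n - \kSEn)$, because $\VolSEn$ depends on its argument only through the $2N+1$ values at the collocation points. Hence $\|u - \uSEn\|_{C([a,b])}\leq C\rme^{-\sqrt{\pi d\alpha N}}$, and the second term is $\Order(\sqrt{N}\,\rme^{-\sqrt{\pi d\alpha N}})$.

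Adding the two estimates proves $\|u - \vSEn\|_{C([a,b])}\leq C\sqrt{N}\,\rme^{-\sqrt{\pi d\alpha N}}$. The main obstacle is the first step, namely verifying that the solution $u$ inherits the H\"{o}lder class $\MC_{\alpha}$ at the endpoints from $g$; the Lipschitz estimate for $\Vol u$ near $z=b$ requires some care with the contour of integration inside $\SEt(\domD_d)$ and with the analytic dependence of $k$ on its first argument. A secondary point is checking that the invertibility and the uniform bound for $(\Ident - \VolSEn)^{-1}$ persist for the mesh~\eqref{eq:h-SE} with a general $\alpha\in(0,1]$ rather than the $\alpha=1$ mesh of Theorem~\ref{thm:SE-Sinc-Nystroem}; this is precisely where the observation that the relevant integrand is merely bounded (so~\eqref{eq:QC} holds with parameter $\alpha$) is used.
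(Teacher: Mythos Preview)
Your approach is essentially the paper's: the same splitting $u-\vSEn=(u-\ProjSE u)+\ProjSE(u-\uSEn)$, the same regularity lemma $u\in\MC_{\alpha}(\SEt(\domD_d))$, the $\Order(\log N)$ Lebesgue constant for $\ProjSE$, and the Nystr\"{o}m bound via $(\Ident-\VolSEn)(u-\uSEn)=(\Vol-\VolSEn)u$ together with Theorem~\ref{thm:SE-Sinc-indefinite}.

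One point to tighten: for the endpoint behaviour of $\Vol u$ at $z=b$ you invoke ``analytic hence locally Lipschitz'' for $k$ in its first variable, but $b$ lies on $\partial\SEt(\domD_d)$, so analyticity on the open domain does not give a Lipschitz bound there. The paper instead uses the hypothesis $k(\cdot,w)\in\MC_{\alpha}(\SEt(\domD_d))$ directly: splitting
\[
\Vol[u](b)-\Vol[u](z)=\int_z^b k(b,w)u(w)\diff w+\int_a^z\{k(b,w)-k(z,w)\}u(w)\diff w,
\]
the first integral is $\Order(|b-z|)$ by boundedness and the second is $\Order(|b-z|^{\alpha})$ by the $\MC_{\alpha}$ hypothesis on $k(\cdot,w)$. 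So $\Vol u$ is only $\alpha$-H\"{o}lder (not Lipschitz) at $b$ in general, which is exactly what is needed. This is precisely the ``main obstacle'' you flagged; the resolution is to use the $\MC_{\alpha}$ assumption on $k$ rather than analyticity.
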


\begin{theorem}
\label{thm:RZ-Sinc-collocation}
Assume that all the assumptions of Theorem~\ref{thm:SE-Sinc-collocation}
are fulfilled.
Then, there exists a positive integer $N_0$
such that for all $N\geq N_0$,
the coefficient matrix $(E_n^{\textRZ} - V_{n}^{\textRZ})$ is invertible.
Furthermore, there exists a constant $C$ independent of $N$
such that for all $N\geq N_0$,
\[
%\label{eq:vRZn-estimate}
 \|u - \vRZn\|_{C([a,b])} \leq C \sqrt{N} \rme^{-\sqrt{\pi d \alpha N}}.
\]
\end{theorem}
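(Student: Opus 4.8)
The plan is to route the estimate through Stenger's solution $\vSEn$, which is already governed by Theorem~\ref{thm:SE-Sinc-collocation}. By the triangle inequality
\[
\|u - \vRZn\|_{C([a,b])} \le \|u - \vSEn\|_{C([a,b])} + \|\vSEn - \vRZn\|_{C([a,b])},
\]
and since the hypotheses assumed here coincide with those of Theorem~\ref{thm:SE-Sinc-collocation}, the first term is already $\Order(\sqrt N\,\rme^{-\sqrt{\pi d\alpha N}})$. Hence the whole task reduces to showing that $\|\vSEn - \vRZn\|_{C([a,b])}$ is of strictly smaller order than $\sqrt N\,\rme^{-\sqrt{\pi d\alpha N}}$. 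For the invertibility claim I would first record the matrix identity behind Theorem~\ref{thm:equivalence}: a direct check from the definitions of $V_n^{\textRZ}$, $\kSEn$, $E_n^{\textRZ}$ gives $V_n^{\textRZ} = \kSEn E_n^{\textRZ}$, hence $E_n^{\textRZ} - V_n^{\textRZ} = (I_n - \kSEn)E_n^{\textRZ}$. Now $E_n^{\textRZ}$ agrees with $I_n$ outside its first and last columns $(\omega_a(\tSE_i))_{i=-N}^{N}$ and $(\omega_b(\tSE_i))_{i=-N}^{N}$, and reordering gives $\det E_n^{\textRZ} = 1 - \omega_b(\tSE_{-N}) - \omega_a(\tSE_N) \to 1$, so $(E_n^{\textRZ})^{-1}$ exists and is bounded uniformly in $N$. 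Combined with the invertibility of $(I_n - \kSEn)$ for $N \ge N_0$ (from Theorem~\ref{thm:SE-Sinc-collocation}), this makes $(E_n^{\textRZ} - V_n^{\textRZ})$ invertible for $N \ge N_0$, and moreover $\mathbd{c}_n = (E_n^{\textRZ} - V_n^{\textRZ})^{-1}\mathbd{g}_n^{\textSE} = (E_n^{\textRZ})^{-1}\mathbd{u}_n^{\textSE}$, i.e.\ $E_n^{\textRZ}\mathbd{c}_n = \mathbd{u}_n^{\textSE}$.

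To estimate the key term I would use that $\vSEn(\tSE_i) = \vRZn(\tSE_i)$ for all $i$ (Theorem~\ref{thm:equivalence}), while both $\vSEn$ (by~\eqref{eq:SE-Sinc-collocation}) and $\vRZn$ (by~\eqref{eq:vRZn}) belong to the span of $\{\omega_a,\omega_b\}\cup\{S(j,h)(\SEtInv(\cdot)):|j|\le N\}$. That span is $(2N+3)$-dimensional and evaluation at $\tSE_{-N},\dots,\tSE_{N}$ maps it onto $\mathbb{R}^{2N+1}$, so the functions in the span that vanish at all nodes form a two-dimensional subspace, spanned by the truncated Sinc-interpolation residuals
\[
\eta_c(t) = \omega_c(t) - \sum_{j=-N}^{N}\omega_c(\tSE_j)\,S(j,h)(\SEtInv(t)),\qquad c\in\{a,b\}.
\]
Consequently $\vRZn - \vSEn = \beta_a\eta_a + \beta_b\eta_b$, and letting $t\to a$ and $t\to b$, where every Sinc term vanishes, identifies $\beta_a = c_{-N} - \uSEn(\tSE_{-N})$ and $\beta_b = c_N - \uSEn(\tSE_N)$.

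It then remains to bound $\|\eta_c\|_{C([a,b])}$ and $|\beta_a|,|\beta_b|$. First, $\|\eta_c\|_{C([a,b])} \le 1 + \Lambda_N$, where $\Lambda_N = \sup_{x\in\mathbb{R}}\sum_{|j|\le N}|S(j,h)(x)| = \Order(\log N)$ is the classical Sinc Lebesgue constant. Second, the rows $\pm N$ of $E_n^{\textRZ}\mathbd{c}_n = \mathbd{u}_n^{\textSE}$ involve only $c_{-N},c_N$ and read $\omega_a(\tSE_{\pm N})\,c_{-N} + \omega_b(\tSE_{\pm N})\,c_N = \uSEn(\tSE_{\pm N})$; this $2\times 2$ matrix is $I_2 + \Order(\rho)$ with $\rho := \max\{\omega_b(\tSE_{-N}),\omega_a(\tSE_N)\}$, so $|\beta_a| + |\beta_b| \le C\rho\,\|\mathbd{u}_n^{\textSE}\|_\infty$. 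Here $\|\mathbd{u}_n^{\textSE}\|_\infty = \max_i|\vSEn(\tSE_i)| \le \|u\|_{C([a,b])} + \|u - \vSEn\|_{C([a,b])}$ is bounded uniformly for $N \ge N_0$ by the first step, while a direct computation with $t = \SEt(x)$ gives $\rho = \tfrac{1}{2}\bigl(1 - \tanh(Nh/2)\bigr) \le \rme^{-Nh} = \rme^{-\sqrt{\pi d N/\alpha}}$ for the choice~\eqref{eq:h-SE} of $h$. Hence $\|\vSEn - \vRZn\|_{C([a,b])} = \Order\bigl(\log N\,\rme^{-\sqrt{\pi d N/\alpha}}\bigr)$; since $0<\alpha\le 1$ forces $\sqrt{\pi d N/\alpha}\ge\sqrt{\pi d\alpha N}$ and $\log N\le\sqrt N$ eventually, this is $\Order\bigl(\sqrt N\,\rme^{-\sqrt{\pi d\alpha N}}\bigr)$, and the triangle inequality completes the proof.

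I expect the main obstacle to be this final comparison: what actually keeps $\|\vSEn - \vRZn\|$ under the target rate is the hypothesis $\alpha\le 1$, which makes the endpoint decay $\rme^{-\sqrt{\pi d N/\alpha}}$ beat $\rme^{-\sqrt{\pi d\alpha N}}$ uniformly in $\alpha$; I would verify this step, together with the uniform boundedness of $\|\mathbd{u}_n^{\textSE}\|_\infty$ and the clean two-dimensional-kernel decomposition, with the greatest care, the remaining ingredients ($V_n^{\textRZ} = \kSEn E_n^{\textRZ}$, $\det E_n^{\textRZ}\to 1$, and the $\Order(\log N)$ Lebesgue bound) being routine.
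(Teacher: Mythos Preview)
Your proof is correct and follows the same overall route as the paper: the triangle inequality through $\vSEn$, Theorem~\ref{thm:SE-Sinc-collocation} for the first term, and a bound of order $\log N\cdot\rme^{-Nh}=\log N\cdot\rme^{-\sqrt{\pi d N/\alpha}}$ for $\|\vSEn-\vRZn\|$, combined with the inequality $\rme^{-\sqrt{\pi d N/\alpha}}\le\rme^{-\sqrt{\pi d\alpha N}}$ for $\alpha\le 1$. The paper packages the second step differently: it introduces an operator $\ProjRZ$ with $\vRZn=\ProjRZ\uSEn$ and proves the operator-norm estimate $\|\ProjSE-\ProjRZ\|_{\mathcal{L}(C,C)}\le C\log(N+1)/(\rme^{Nh}-1)$, but unpacking that proof yields exactly your decomposition $(\ProjSE-\ProjRZ)f=(f(\tSE_{-N})-\beta_N)\eta_a+(f(\tSE_N)-\gamma_N)\eta_b$ with your coefficient bound coming from the same $2\times 2$ system. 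Your invertibility argument is genuinely more direct than the paper's: you observe the factorization $E_n^{\textRZ}-V_n^{\textRZ}=(I_n-\kSEn)E_n^{\textRZ}$ with $\det E_n^{\textRZ}=\tanh(Nh/2)\neq 0$, whereas the paper routes through a chain of operator-equation equivalences (Lemmas~\ref{lem:SE-Sinc-Nystroem}, \ref{lem:RZ-solution} and Proposition~\ref{prop:RZ-solution}). Your route has the bonus of delivering $E_n^{\textRZ}\mathbd{c}_n=\mathbd{u}_n^{\textSE}$ immediately, which is what feeds the $2\times 2$ system; the paper's operator framework, on the other hand, makes the parallel with Stenger's method (Lemma~\ref{lem:Stenger-solution}) more transparent.
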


\begin{remark}
In view of Theorems~\ref{thm:Rarebnia-Rashidinia}
and~\ref{thm:RZ-Sinc-collocation},
one might assume that Theorem~\ref{thm:SE-Sinc-collocation}
is proved by bounding $\|(I_n - \kSEn)^{-1}\|_2$ uniformly for $N$.
However, this is not the case; see Section~\ref{sec:proof-SE} for details.
\end{remark}

Theorems~\ref{thm:SE-Sinc-collocation} and~\ref{thm:RZ-Sinc-collocation}
reveal that both methods achieve the same convergence rate.
Therefore, users may prefer Stenger's method
because the implementation of the method by Rashidinia and Zarebnia
is rather complicated.
This complication also causes difficulty in extension to the \emph{system} of
Volterra integral equations.
For this reason, in the next section,
we consider the improvement of Stenger's method.

\section{Sinc-collocation method combined with the DE transformation}
\label{sec:de-collocation}

The SE-Sinc-collocation method described in the previous section
employs the tanh transformation as a variable transformation.
In this section, a new method is derived
by replacing the tanh transformation with the DE transformation.
Then, its convergence theorem is stated.

\subsection{Derivation of the DE-Sinc-collocation method}

First, solve the linear system~\eqref{eq:DE-Sinc-Nystroem-linear-eq}
and obtain~$\mathbd{u}_n^{\textDE}$.
Then, the approximated solution $\vDEn$ is expressed as
the generalized DE-Sinc approximation of $\uDEn$, i.e.,
\begin{align}
 \vDEn(t)
 &= \ProjDE[\uDEn](t)\nonumber\\
& = \uDEn(\tDE_{-N})\omega_a(t) + \uDEn(\tDE_{N})\omega_b(t)
+ \sum_{j=-N}^N
\left\{\uDEn(\tDE_j) - \uDEn(\tDE_{-N})\omega_a(\tDE_j)
 - \uDEn(\tDE_N)\omega_b(\tDE_j)\right\}S(j,h)(\DEtInv(t)) ,
\label{eq:DE-Sinc-collocation}
\end{align}
where $\ProjDE$ is defined by~\eqref{eq:ProjDE}.
This procedure is referred to as the DE-Sinc-collocation method.

\subsection{Main result 2: Convergence of the DE-Sinc-collocation method}

In this paper,
we show the convergence of the DE-Sinc-collocation method
as follows.
The proof is provided in Section~\ref{sec:proof-DE}.

\begin{theorem}
\label{thm:DE-Sinc-collocation}
Let $\alpha$ and $d$ be positive constants with
$\alpha\leq 1$ and $d<\pi/2$.
Assume that
all the assumptions on $g$ and $k$ in Theorem~\ref{thm:DE-Sinc-Nystroem}
are fulfilled.
Furthermore, assume that $g$ and $k(\cdot,w)$
belong to $\MC_{\alpha}(\DEt(\domD_d))$ for all $w\in\DEt(\domD_d)$.
Let $h$ be selected by the formula~\eqref{eq:h-DE}.
Then, there exists a positive integer $N_0$
such that for all $N\geq N_0$,
the coefficient matrix $(I_n - \kDEn)$ is invertible.
Furthermore, there exists a constant $C$ independent of $N$
such that for all $N\geq N_0$,
\[
 \|u - \vDEn\|_{C([a,b])}
\leq C \rme^{-\pi d N/\log(2 d N/\alpha)}.
\]
\end{theorem}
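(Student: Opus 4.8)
The plan is to reduce the theorem to three facts by exploiting the identity $\vDEn=\ProjDE[\uDEn]$ together with the linearity and interpolation property of $\ProjDE$. The three facts are: (i) the exact solution $u$ belongs to $\MC_{\alpha}(\DEt(\domD_d))$; (ii) the underlying DE-Sinc-Nystr\"{o}m step converges, with $(I_n-\kDEn)$ invertible for large $N$ and $\|u-\uDEn\|_{C([a,b])}$ suitably small; and (iii) Theorem~\ref{thm:DE-Sinc-general} governs the generalized DE-Sinc approximation of $u$. For (i), since $\Vol$ is a Volterra operator the inverse $(\Ident-\Vol)^{-1}$ exists on $C([a,b])$, so $u$ exists and $u=g+\Vol[u]$. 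Expanding the Neumann series $u=\sum_{m\ge 0}\Vol^{m}[g]$ (convergent because $\Vol$ is quasinilpotent) and using $g,k(z,\cdot),k(\cdot,w)\in\Hinf(\DEt(\domD_d))$, each term extends analytically to $\DEt(\domD_d)$ and the series converges in $\Hinf(\DEt(\domD_d))$, so $u\in\Hinf(\DEt(\domD_d))$. Then, because $u$ is bounded and $k$ is analytic in a neighbourhood of $[a,b]$, the extended indefinite integral $\Vol[u](t)=\int_a^t k(t,s)u(s)\diff s$ is Lipschitz up to both endpoints (and vanishes at $t=a$), hence $\Vol[u]\in\MC_{1}(\DEt(\domD_d))\subset\MC_{\alpha}(\DEt(\domD_d))$; combined with $g\in\MC_{\alpha}(\DEt(\domD_d))$ this yields $u\in\MC_{\alpha}(\DEt(\domD_d))$.

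For (ii), let $\VolDEn$ denote the DE-Sinc indefinite integration of $\Vol$, so that the nodal linear system $(I_n-\kDEn)\mathbd{u}_n^{\textDE}=\gDEn$ is invertible exactly when $(\Ident-\VolDEn)$ is invertible on $C([a,b])$, and then $\uDEn=(\Ident-\VolDEn)^{-1}g$. As in the proof of Theorem~\ref{thm:DE-Sinc-Nystroem} (now carried out with $h$ chosen by~\eqref{eq:h-DE} for the present $\alpha$), $\Vol$ is compact, $\{\VolDEn\}_N$ is collectively compact, and $\|(\Vol-\VolDEn)\Vol\|\to 0$; hence, by the collectively compact operator argument, there exist $N_0$ and $M$ such that $(\Ident-\VolDEn)$ is invertible with $\|(\Ident-\VolDEn)^{-1}\|\le M$ for all $N\ge N_0$. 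From $(\Ident-\Vol)u=g=(\Ident-\VolDEn)\uDEn$ we obtain $u-\uDEn=(\Ident-\VolDEn)^{-1}(\Vol-\VolDEn)[u]$. Since $u\in\MC_{\alpha}(\DEt(\domD_d))$ is bounded, for every $t\in[a,b]$ the function $s\mapsto k(t,s)u(s)$ is bounded on $\DEt(\domD_d)$ and therefore satisfies~\eqref{eq:QC} on that bounded domain with a constant uniform in $t$ (any bounded function does, because $\alpha\le 1$); Theorem~\ref{thm:DE-Sinc-indefinite} then gives
\[
\|(\Vol-\VolDEn)[u]\|_{C([a,b])}\le C\,\frac{\log(2dN/\alpha)}{N}\,\rme^{-\pi d N/\log(2dN/\alpha)},
\]
and hence $\|u-\uDEn\|_{C([a,b])}\le C'\,\{\log(2dN/\alpha)/N\}\,\rme^{-\pi d N/\log(2dN/\alpha)}$.

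It remains to combine the pieces. Using the linearity of $\ProjDE$,
\[
u-\vDEn=(u-\ProjDE u)+\ProjDE[u-\uDEn].
\]
By (i) and Theorem~\ref{thm:DE-Sinc-general}, the first term is bounded by $C\,\rme^{-\pi d N/\log(2dN/\alpha)}$. For the second term, the operator norm of $\ProjDE$ on $C([a,b])$ is controlled by the Lebesgue constant of Sinc interpolation on the $2N+1$ nodes together with the two endpoint weights, so $\|\ProjDE\|\le C\log N$; therefore $\|\ProjDE[u-\uDEn]\|_{C([a,b])}\le C\log N\cdot\|u-\uDEn\|_{C([a,b])}=\Order\bigl((\log N)^{2}N^{-1}\rme^{-\pi d N/\log(2dN/\alpha)}\bigr)$, which is dominated by $\rme^{-\pi d N/\log(2dN/\alpha)}$ for $N$ large. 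Adding the two estimates gives $\|u-\vDEn\|_{C([a,b])}\le C\,\rme^{-\pi d N/\log(2dN/\alpha)}$, as claimed.

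I expect the main obstacle to be step (ii): simultaneously establishing the invertibility of $(I_n-\kDEn)$ for large $N$ and the $N$-uniform bound $\|(\Ident-\VolDEn)^{-1}\|\le M$ with the $\alpha$-dependent mesh~\eqref{eq:h-DE}. This is exactly where one must avoid the tempting but, as noted in the Remark after Theorem~\ref{thm:RZ-Sinc-collocation}, inadequate route of bounding $\|(I_n-\kDEn)^{-1}\|_2$ directly, and instead argue at the operator level; verifying collective compactness of $\{\VolDEn\}_N$ and the convergence $\|(\Vol-\VolDEn)\Vol\|\to 0$ requires care with the double-exponentially decaying DE weights and with the endpoint behaviour of $\Vol[f]$ on the Riemann surface $\DEt(\domD_d)$, though it parallels the existing analysis of the DE-Sinc-Nystr\"{o}m method.
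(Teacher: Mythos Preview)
Your proof follows the same route as the paper: the decomposition $u-\vDEn=(u-\ProjDE u)+\ProjDE(u-\uDEn)$, Theorem~\ref{thm:DE-Sinc-general} for the first term after establishing $u\in\MC_{\alpha}(\DEt(\domD_d))$, and the $\Order(\log N)$ bound on $\|\ProjDE\|$ combined with the DE-Sinc-Nystr\"{o}m estimate (Lemma~\ref{lem:DE-Sinc-Nystroem} plus Theorem~\ref{thm:DE-Sinc-indefinite}) for the second. One minor slip to correct: under the stated hypotheses you only obtain $\Vol[u]\in\MC_{\alpha}$, not $\MC_{1}$, because the term $\int_a^z\{k(b,w)-k(z,w)\}u(w)\diff w$ inherits only the $\alpha$-H\"{o}lder modulus of $k(\cdot,w)$ at $b$ (this is exactly where the extra assumption $k(\cdot,w)\in\MC_{\alpha}$ is used, cf.\ Theorem~\ref{thm:Sinc-collocation-regularity})---but since you only need $u\in\MC_{\alpha}$ downstream, the argument is unaffected.
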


Compared to Theorems~\ref{thm:SE-Sinc-collocation}
and~\ref{thm:RZ-Sinc-collocation},
we see that the convergence rate given by this theorem
is significantly improved.

\section{Numerical experiments}
\label{sec:numer-result}

This section presents numerical results for the following five methods:
the SE/DE-Sinc-Nystr\"{o}m methods
by Muhammad et al.~\cite{muhammad05:_numer},
the SE-Sinc-collocation methods by Stenger~\cite{stenger93:_numer}
and Rashidinia--Zarebnia~\cite{rashidinia07:_solut},
and the DE-Sinc-collocation methods by this paper.
The computation was performed on
a MacBook Air computer with 1.7~GHz Intel Core i7
with 8 GB memory, running macOS Big Sur.
The computation programs were implemented
in the C programming language with double-precision floating-point arithmetic,
and compiled with Apple clang version 13.0.0 with no optimization.
Cephes Math Library was used for computation of the sine integral.
LAPACK in Apple's Accelerate framework was used for computation
of the system of linear equations.
The source code for all programs is available at
\url{https://github.com/okayamat/sinc-colloc-volterra}.

We consider the following two equations
(taken from Rashidinia--Zarebnia~\cite[Example~4]{rashidinia07:_solut}
and~Polyanin--Manzhirov~\cite[Equation 2.1.45]{polyanin08:_handb}):
\begin{align}
\label{eq:example1}
u(t) + \int_0^t t s u(s)\diff s
 &= \rme^{-t^2} +\frac{t}{2}(1 - \rme^{-t^2}),\quad 0\leq t\leq 1,\\
\label{eq:example2}
u(t) - 6\int_0^t (\sqrt{t} - \sqrt{s}) u(s)\diff s
 &= 1 +\sqrt{t} - 2t\sqrt{t} - t^2,\quad 0\leq t\leq 1,
\end{align}
whose solutions are $u(t)=\rme^{-t^2}$ and
$u(t)=1 + \sqrt{t}$, respectively.
In the case of~\eqref{eq:example1},
the assumptions of
Theorems~\ref{thm:SE-Sinc-Nystroem}, \ref{thm:SE-Sinc-collocation}
and~\ref{thm:RZ-Sinc-collocation}
are fulfilled with $d=3.14$
and $\alpha=1$,
and those of Theorems~\ref{thm:DE-Sinc-Nystroem}
and~\ref{thm:DE-Sinc-collocation}
are fulfilled with $d=1.57$
and $\alpha=1$.
In the case of~\eqref{eq:example2},
the assumptions of
Theorems~\ref{thm:SE-Sinc-Nystroem}, \ref{thm:SE-Sinc-collocation}
and~\ref{thm:RZ-Sinc-collocation}
are fulfilled with $d=3.14$
and $\alpha=1/2$,
and those of Theorems~\ref{thm:DE-Sinc-Nystroem}
and~\ref{thm:DE-Sinc-collocation}
are fulfilled with $d=1.57$ and $\alpha=1/2$.
Therefore, those values were used for implementation.
The errors were evaluated at 2048 equally spaced points 
over the given interval,
and the maximum error among them was plotted on the graph in
Figures~\ref{fig:example1_N}--\ref{fig:example2_t}.

From all figures, we can observe that
the SE-Sinc-collocation methods by Stenger and Rashidinia--Zarebnia
yield almost the same performance.
This result coincides
with Theorems~\ref{thm:SE-Sinc-collocation}
and~\ref{thm:RZ-Sinc-collocation}.
%the description at the end of~\cref{sec:collocation}.
%Furthermore, the DE-Sinc-collocation method presented by this paper
From Figure~\ref{fig:example1_N}, we can observe that
the SE/DE-Sinc-Nystr\"{o}m methods are slightly better than
the SE/DE-Sinc-collocation methods with respect to $N$.
This result coincides with
Theorems~\ref{thm:SE-Sinc-Nystroem}, \ref{thm:DE-Sinc-Nystroem},
\ref{thm:SE-Sinc-collocation}, \ref{thm:RZ-Sinc-collocation}
and~\ref{thm:DE-Sinc-collocation}.
However, Figure~\ref{fig:example1_t} shows that
with respect to the computation time,
the SE/DE-Sinc-collocation methods demonstrate significantly better performance
than the SE/DE-Sinc-Nystr\"{o}m methods.
This is because the SE/DE-Sinc-Nystr\"{o}m methods
include a special function as well as given functions $k$ and $g$
in the basis functions of their approximate solutions.
We note that the performance of
the SE/DE-Sinc-collocation methods in Figure~\ref{fig:example2_N}
reduced than that in Figure~\ref{fig:example1_N},
which is due to the difference of $\alpha$.

\begin{figure}[htbp]
%\begin{minipage}{0.495\linewidth}
  \centering
 \includegraphics[scale=.8]{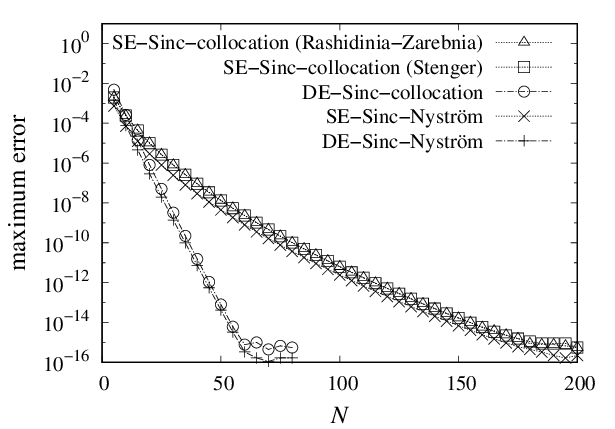}
  \caption{Errors with respect to $N$ for~\eqref{eq:example1}.}
  \label{fig:example1_N}
%\end{minipage}
\end{figure}
\begin{figure}[htbp]
%\begin{minipage}{0.495\linewidth}
  \centering
  \includegraphics[scale=.8]{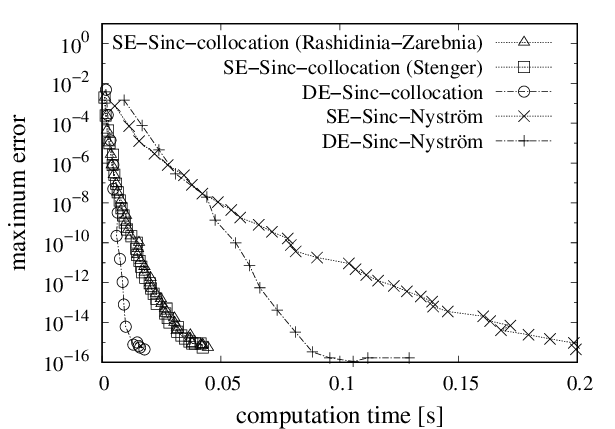}
  \caption{Errors with respect to the computation time for~\eqref{eq:example1}.}
  \label{fig:example1_t}
%\end{minipage}
\end{figure}
\begin{figure}[htbp]
%\begin{minipage}{0.495\linewidth}
  \centering
 \includegraphics[scale=.8]{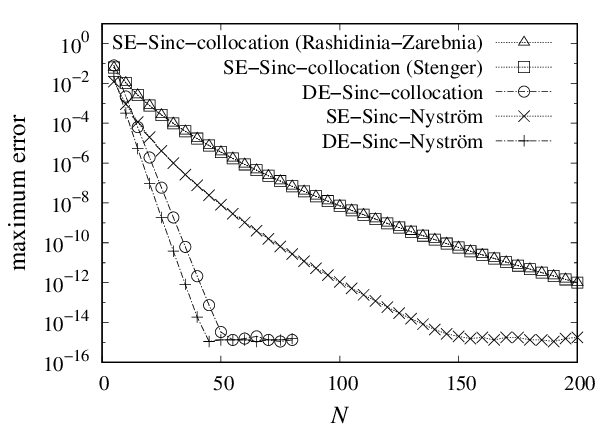}
  \caption{Errors with respect to $N$ for~\eqref{eq:example2}.}
  \label{fig:example2_N}
%\end{minipage}
\end{figure}
\begin{figure}[htbp]
%\begin{minipage}{0.495\linewidth}
  \centering
  \includegraphics[scale=.8]{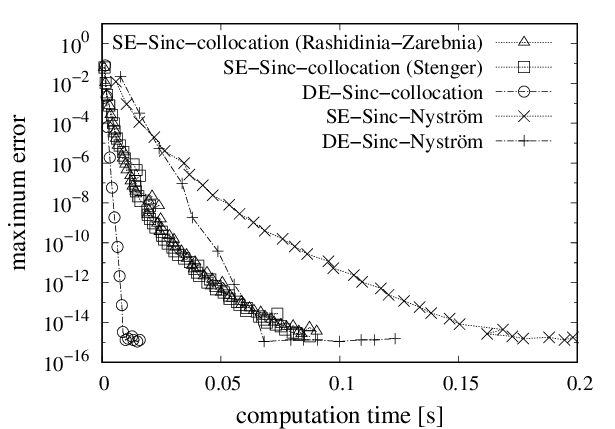}
  \caption{Errors with respect to the computation time for~\eqref{eq:example2}.}
  \label{fig:example2_t}
%\end{minipage}
\end{figure}

\section{Proofs for the theorems presented in Section~\ref{sec:collocation}}
\label{sec:proof-SE}

In this section,
we provide proofs for Theorems~\ref{thm:equivalence}
and~\ref{thm:SE-Sinc-collocation}.

\subsection{Proof of Theorem~\ref{thm:equivalence}}
\label{sec:proof-equivalence}

In addition to the given equation $(\Ident - \Vol)u = g$,
let us consider the following three equations:
\begin{align}
 (\Ident - \VolSEn)    \uSEn &= g, \label{eq:SE-Sinc-Nystroem-symbol} \\
 (\Ident - \ProjSE \VolSEn)v &= \ProjSE g,
 \label{eq:SE-Sinc-collocation-symbol}\\
% (E_m^{\textRZ} - V_{m}^{\textRZ})\mathbd{c}_m &= \mathbd{g}_m^{\textSE},
 (\Ident - \ProjRZ \VolSEn)w &= \ProjRZ g,
\label{eq:RZ-Sinc-collocation-symbol}
\end{align}
where $\VolSEn$ and $\ProjSE$ are defined
by~\eqref{eq:VolSEn} and~\eqref{eq:ProjSE}, respectively,
and $\ProjRZ$ is defined by
\begin{align}
\label{eq:ProjRZ}
  \ProjRZ[f](t)
&=\sum_{j=-N+1}^{N+1}\left\{f(\tSE_j) - \beta_N \omega_a(\tSE_j)
 - \gamma_N \omega_a(\tSE_j)\right\}S(j,h)(\SEtInv(t))
+ \beta_N \omega_a(t) + \gamma_N \omega_b(t),
%\nonumber
\end{align}
where $\beta_N$ and $\gamma_N$ are defined by
\begin{align*}
 \beta_N &=
\frac{f(\tSE_{-N})\omega_b(\tSE_{N}) - f(\tSE_{N})\omega_b(\tSE_{-N})}
     {\omega_a(\tSE_{-N})\omega_b(\tSE_{N}) - \omega_b(\tSE_{-N})\omega_a(\tSE_{N})},\\
\gamma_N &=
\frac{f(\tSE_{N})\omega_a(\tSE_{-N}) - f(\tSE_{-N})\omega_a(\tSE_{N})}
     {\omega_a(\tSE_{-N})\omega_b(\tSE_{N}) - \omega_b(\tSE_{-N})\omega_a(\tSE_{N})}.
\end{align*}
\begin{remark}
The denominator of $\beta_N$ and $\gamma_N$ is not zero
because
\begin{align*}
 \omega_a(\tSE_{-N})\omega_b(\tSE_{N}) - \omega_b(\tSE_{-N})\omega_a(\tSE_{N})
&=(1 - \omega_b(\tSE_{-N}))\omega_b(\tSE_N)
- \omega_b(\tSE_{-N})(1 - \omega_b(\tSE_N))\\
&=\omega_b(\tSE_N) - \omega_b(\tSE_{-N})\\
&= \tanh\left(\frac{Nh}{2}\right)\neq 0,
\end{align*}
provided that $N$ is a positive integer and $h> 0$.
\end{remark}

Because~\eqref{eq:SE-Sinc-Nystroem-symbol} is equivalent
to~\eqref{eq:SE-Sinc-Nystroem},
the solution of~\eqref{eq:SE-Sinc-Nystroem-symbol}
is the approximate solution of the SE-Sinc-Nystr\"{o}m method.
On~\eqref{eq:SE-Sinc-Nystroem-symbol}, the following result was obtained.

\begin{lemma}[Okayama et al.~{\cite[Lemma~6.7]{okayama13:_theo}}]
\label{lem:SE-Sinc-Nystroem}
Assume that
all the assumptions of Theorem~\ref{thm:SE-Sinc-Nystroem}
are fulfilled.
Then, there exists a positive integer $N_0$ such that for all $N\geq N_0$,
\eqref{eq:SE-Sinc-Nystroem-symbol}
has a unique solution $\uSEn\in C([a,b])$.
Furthermore, there exists a constant $C$ independent of $N$ such that
for all $N\geq N_0$,
\begin{equation}
\label{eq:SE-Sinc-Nystroem-error}
 \|u - \uSEn\|_{C([a,b])}\leq C \|\Vol u - \VolSEn u\|_{C([a,b])}.
\end{equation}
\end{lemma}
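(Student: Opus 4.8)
The plan is to reduce the whole statement to one quantitative fact: that $\Ident-\VolSEn$ is boundedly invertible on $C([a,b])$ for all large $N$, \emph{uniformly} in $N$. First I would record the only input needed about the exact equation: since $k$ is continuous on the compact set $\{(t,s):a\le s\le t\le b\}$, the operator $\Vol$ is a Volterra operator with $\|\Vol^{m}\|_{C([a,b])}\le (M(b-a))^{m}/m!$ for $M=\max|k|$, so the Neumann series $\sum_{m\ge0}\Vol^{m}$ converges and $\Ident-\Vol$ has a bounded inverse on $C([a,b])$. Next, subtracting $(\Ident-\Vol)u=g$ from $(\Ident-\VolSEn)\uSEn=g$ gives
\[
(\Ident-\VolSEn)(u-\uSEn)=(\Ident-\VolSEn)u-(\Ident-\Vol)u=\Vol u-\VolSEn u,
\]
so if $\Ident-\VolSEn$ is invertible for $N\ge N_{0}$ with $C:=\sup_{N\ge N_{0}}\|(\Ident-\VolSEn)^{-1}\|_{C([a,b])}<\infty$, then~\eqref{eq:SE-Sinc-Nystroem-error} follows at once. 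Moreover the existence and uniqueness of $\uSEn$ are equivalent to that invertibility, because $\VolSEn$ has finite-dimensional range (it lies in the span of the $2N+1$ continuous functions $t\mapsto k(t,\tSE_{j})\SEtDiv(jh)J(j,h)(\SEtInv(t))$) and is therefore compact, so the Fredholm alternative applies.

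The heart of the proof is thus the uniform invertibility of $\Ident-\VolSEn$, which I would establish through collectively compact operator approximation theory. Three ingredients are required: (i) a uniform bound $\|\VolSEn\|_{C([a,b])}\le M_{0}$ for all $N\ge N_{0}$; (ii) collective compactness of $\{\VolSEn\}_{N\ge N_{0}}$, i.e.\ relative compactness in $C([a,b])$ of $\{\VolSEn f:\|f\|_{C([a,b])}\le1,\ N\ge N_{0}\}$; and (iii) strong convergence $\|\VolSEn f-\Vol f\|_{C([a,b])}\to0$ for every fixed $f\in C([a,b])$. Granting these, the standard perturbation theorem for collectively compact approximations of the invertible operator $\Ident-\Vol$ (Anselone's theory) yields an $N_{0}$ beyond which $\Ident-\VolSEn$ is invertible with $\sup_{N\ge N_{0}}\|(\Ident-\VolSEn)^{-1}\|<\infty$; equivalently one argues directly from $\|(\VolSEn-\Vol)\Vol\|_{C([a,b])}\to0$ (valid since $\Vol$ is compact and $\VolSEn\to\Vol$ strongly) together with the boundedness of $(\Ident-\Vol)^{-1}$ and of $\VolSEn$. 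For (ii), equi-boundedness is immediate from (i), and equicontinuity of $\{\VolSEn f\}$ follows from the uniform continuity of $(t,s)\mapsto k(t,s)$ on $[a,b]^{2}$ and of $t\mapsto\Si(\pi(\SEtInv(t)-jh)/h)$, combined with (i), via the Arzel\`{a}--Ascoli theorem.

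I expect the genuine obstacles to be ingredients (i) and (iii), which are precisely where the structure of the SE-Sinc indefinite integration enters rather than soft operator theory. For (i), observing that $\VolSEn f(t)$ is the SE-Sinc indefinite integration applied to the continuous integrand $s\mapsto k(t,s)f(s)$, the bound amounts to controlling the Lebesgue-type constant $\sup_{\xi}\sum_{j=-N}^{N}|\SEtDiv(jh)J(j,h)(\xi)|$; the weights are not manifestly nonnegative (the factor $\tfrac12+\tfrac1\pi\Si(y)$ dips slightly below zero), so one splits $J(j,h)$ into its ``causal'' principal part $h\,\mathbf{1}[jh<\xi]$, whose weighted sum is at most $\int_{-\infty}^{\xi}\SEtDiv(x)\diff x\le b-a$, and an oscillating remainder of size $\Order(h/(h+|jh-\xi|))$, which, together with the $\operatorname{sech}^{2}$-type decay of $\SEtDiv$, contributes only $\Order(h\log N)=o(1)$. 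Ingredient (iii) requires the SE-Sinc indefinite integration to converge for merely continuous integrands; I would obtain this by density, approximating $f\in C([a,b])$ uniformly by polynomials $f_{\varepsilon}$, applying the error estimate of Theorem~\ref{thm:SE-Sinc-indefinite} (with $\alpha=1$) to the analytic, bounded integrand $s\mapsto k(t,s)f_{\varepsilon}(s)$ to make the middle term $\Order(\rme^{-\sqrt{\pi d N}})$, and using (i) uniformly to absorb $\|\VolSEn(f-f_{\varepsilon})\|+\|\Vol(f-f_{\varepsilon})\|\le(M_{0}+\|\Vol\|)\varepsilon$. Once (i)--(iii) are secured, the return to~\eqref{eq:SE-Sinc-Nystroem-error} and to the existence and uniqueness of $\uSEn$ is routine.
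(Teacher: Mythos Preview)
This lemma is not proved in the present paper; it is imported verbatim from Okayama et al.~\cite[Lemma~6.7]{okayama13:_theo} and used as a black box. Your plan---invert $\Ident-\Vol$ via the Volterra Neumann series, transfer uniform invertibility to $\Ident-\VolSEn$ through Anselone's collectively-compact perturbation theory, and read off~\eqref{eq:SE-Sinc-Nystroem-error} from the identity $(\Ident-\VolSEn)(u-\uSEn)=\Vol u-\VolSEn u$---is the standard route for Nystr\"om-type schemes and is essentially the approach taken in the cited reference.

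Two remarks on the execution. For ingredient~(i) you are working harder than necessary: since $\bigl|\tfrac12+\tfrac1\pi\Si(y)\bigr|\le\tfrac12+\tfrac1\pi\Si(\pi)<1.09$ for every real $y$, one has $|J(j,h)(\xi)|\le1.09\,h$ outright, and $h\sum_{j}\SEtDiv(jh)$ is bounded uniformly in $N$ as a Riemann sum for $\int_{\mathbb{R}}\SEtDiv=b-a$; no causal/oscillatory splitting is needed. For ingredient~(ii), by contrast, your one-line justification is too thin: each map $t\mapsto\Si\bigl(\pi(\SEtInv(t)-jh)/h\bigr)$ is uniformly continuous on $[a,b]$ individually, but its modulus of continuity degenerates as $h\to0$ and as $t$ approaches the endpoints (where $(\SEtInv)'$ blows up), so uniform continuity of a single such map does \emph{not} yield equicontinuity uniformly in $j$ and $N$. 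The repair is to argue at the level of the weighted aggregate: write $J(j,h)(\xi_1)-J(j,h)(\xi_2)=\int_{\xi_2}^{\xi_1}S(j,h)(x)\diff x$ and combine the $\operatorname{sech}^{2}$-decay of the weights $\SEtDiv(jh)$ with the bound from~(i) to show that $\sum_{j}\SEtDiv(jh)\,\bigl|J(j,h)(\SEtInv(t_1))-J(j,h)(\SEtInv(t_2))\bigr|\to0$ uniformly in $N$ as $|t_1-t_2|\to0$. That is where the real work in~(ii) lies.
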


This lemma says that~\eqref{eq:SE-Sinc-Nystroem-symbol}
has a unique solution for all sufficiently large $N$.
Using this result, we show the following three things:
%\begin{quote}
\begin{enumerate}
 \item[(i)] If~\eqref{eq:SE-Sinc-Nystroem-symbol}
has a unique solution,
then~\eqref{eq:SE-Sinc-collocation-symbol}
has also a unique solution $v = \vSEn$.
 \item[(ii)] If~\eqref{eq:SE-Sinc-Nystroem-symbol}
has a unique solution,
then~\eqref{eq:RZ-Sinc-collocation-symbol}
has also a unique solution $w = \vRZn$.
 \item[(iii)] The two solutions $\vSEn$ and $\vRZn$ are not generally
equivalent, but at the collocation points,
 $\vSEn(\tSE_i) = \vRZn(\tSE_{i})$ $(i=-N,\,\ldots,\,N)$ holds.
\end{enumerate}
%\end{quote}
First, we show (i) as follows.

\begin{lemma}
\label{lem:Stenger-solution}
The following two statements are equivalent:
%\begin{quote}
\begin{enumerate}
%\small\itshape
 \item[{\rm (A)}] Equation~\eqref{eq:SE-Sinc-Nystroem-symbol}
has a unique solution $\uSEn\in C([a,b])$.
 \item[{\rm (B)}] Equation~\eqref{eq:SE-Sinc-collocation-symbol} has
 a unique solution $v\in C([a,b])$.
\end{enumerate}
%\end{quote}
Furthermore, $v=\vSEn$ holds.
\end{lemma}
\begin{proof}
First, let us show $\mathrm{(A)} \Rightarrow \mathrm{(B)}$.
Note that $\VolSEn\ProjSE f = \VolSEn f$ holds
because
of the interpolation property $\ProjSE[f](\tSE_i)=f(\tSE_i)$
($i=-N,\,\ldots,\,N$).
Applying $\ProjSE$ on the both sides
of~\eqref{eq:SE-Sinc-Nystroem-symbol},
we have
\[
 \ProjSE\uSEn
 = \ProjSE(g + \VolSEn\uSEn)
 = \ProjSE(g + \VolSEn\ProjSE\uSEn),
\]
which is equivalent to $\vSEn = \ProjSE(g + \VolSEn \vSEn)$
(recall that $\vSEn = \ProjSE\uSEn$).
This equation implies
that~\eqref{eq:SE-Sinc-collocation-symbol} has a solution
$\vSEn\in C([a,b])$.

Next, we show the uniqueness.
Suppose that~\eqref{eq:SE-Sinc-collocation-symbol}
has another solution $\tilde{v}\in C([a,b])$.
Let us set a function $\tilde{u}$ as
$\tilde{u} = g + \VolSEn\tilde{v}$.
Because $\tilde{v}$ is a solution of~\eqref{eq:SE-Sinc-collocation-symbol},
we have
\[
 \tilde{v} = \ProjSE(g + \VolSEn\tilde{v}) = \ProjSE \tilde{u},
\]
from which it holds that
\[
 \tilde{u} = g + \VolSEn\tilde{v} = g + \VolSEn\ProjSE\tilde{u}
= g + \VolSEn\tilde{u}.
\]
This equation implies that $\tilde{u}$ is a solution
of~\eqref{eq:SE-Sinc-Nystroem-symbol}.
Because the solution of~\eqref{eq:SE-Sinc-Nystroem-symbol}
is unique, $\tilde{u} = u$ holds,
from which we have $\ProjSE\tilde{u}=\ProjSE u$.
Thus, we have $\tilde{v}=v$, which shows $(\mathrm{B})$.

The above argument is reversible,
which proves $\mathrm{(B)}\Rightarrow\mathrm{(A)}$.
Furthermore, in view of the proof above,
we see $v=\vSEn$, which is to be demonstrated.
\end{proof}

Next, for showing (ii),
we show the following result.
The proof is omitted because it goes in the same way as
that of Lemma~\ref{lem:Stenger-solution}.

\begin{lemma}
\label{lem:RZ-solution}
The following two statements are equivalent:
%\begin{quote}
\begin{enumerate}
%\small\itshape
 \item[{\rm (A)}] Equation~\eqref{eq:SE-Sinc-Nystroem-symbol}
has a unique solution $\uSEn\in C([a,b])$.
 \item[{\rm (B)}] Equation~\eqref{eq:RZ-Sinc-collocation-symbol} has
 a unique solution $w\in C([a,b])$.
\end{enumerate}
%\end{quote}
Furthermore, $w=\ProjRZ\uSEn$ holds.
\end{lemma}

To show (ii) completely, we further have to show $w = \vRZn$,
which is done by the following result.
Noting $\ProjRZ[f](\tSE_i)=f(\tSE_i)$ $(i=-N,\,\ldots,\,N)$,
we can prove this result
following Atkinson~\cite[Sect.\ 4.3]{atkinson97:_numer_solut},
and hence the proof is omitted.

\begin{proposition}
\label{prop:RZ-solution}
The following two statements are equivalent:
%\begin{quote}
\begin{enumerate}
%\small\itshape
 \item[{\rm (A)}] Equation~\eqref{eq:RZ-Sinc-collocation-symbol} has
 a unique solution $w\in C([a,b])$.
 \item[{\rm (B)}] Equation~\eqref{eq:RZ-linear-eq} has
 a unique solution $\mathbd{c}_m\in \mathbb{R}^m$.
\end{enumerate}
%\end{quote}
Furthermore, $w=\vRZn$ holds.
\end{proposition}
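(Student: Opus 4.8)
The plan is to follow the classical template relating a projection (collocation) equation in $C([a,b])$ to its coordinate form, as in Atkinson~\cite[Sect.~4.3]{atkinson97:_numer_solut}. Write $X_N$ for the range of $\ProjRZ$, i.e.\ the $m$-dimensional subspace of $C([a,b])$ (with $m=2N+1$) spanned by $\omega_a$, $\omega_b$, and $S(j,h)(\SEtInv(\cdot))$ for $j=-N+1,\,\ldots,\,N-1$. First I would reduce everything to $X_N$: any solution $w$ of~\eqref{eq:RZ-Sinc-collocation-symbol} satisfies $w=\ProjRZ(g+\VolSEn w)$, hence $w\in X_N$; and since $\ProjRZ\VolSEn$ maps $C([a,b])$ into $X_N$ and $\ProjRZ g\in X_N$, equation~\eqref{eq:RZ-Sinc-collocation-symbol} is actually an identity between elements of $X_N$. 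So it suffices to study $\Ident-\ProjRZ\VolSEn$ restricted to $X_N$.

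Next I would fix coordinates on $X_N$ and identify the resulting matrix. Expanding a general $w\in X_N$ in the spanning functions gives precisely the ansatz~\eqref{eq:vRZn} with coefficient vector $\mathbd{c}_m=[c_{-N},\,\ldots,\,c_N]^{\mathrm T}$. Using $S(j,h)(ih)=\delta_{ij}$ for $-N+1\le i,j\le N-1$ and $S(j,h)(\pm Nh)=0$ for those $j$, one checks that the evaluation map $w\mapsto(w(\tSE_{-N}),\,\ldots,\,w(\tSE_{N}))^{\mathrm T}$ is represented, in this basis, exactly by the matrix $E_n^{\textRZ}$; by the computation in the Remark above, its determinant is $\pm\tanh(Nh/2)\neq0$, so this map is a linear isomorphism $X_N\cong\mathbb{R}^{m}$. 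Two consequences get used: (a) $\ProjRZ$ fixes $X_N$ pointwise, because for $\varphi\in X_N$ both $\varphi$ and $\ProjRZ\varphi$ lie in $X_N$ and agree at all collocation points by the interpolation property $\ProjRZ[f](\tSE_i)=f(\tSE_i)$; and (b) from~\eqref{eq:ProjRZ}, $\ProjRZ[f]$ depends on $f$ only through $f(\tSE_{-N}),\,\ldots,\,f(\tSE_{N})$.

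Then I would assemble the equivalence. For $w\in X_N$, evaluating~\eqref{eq:RZ-Sinc-collocation-symbol} at $t=\tSE_i$ and using the interpolation property collapses it to the $m$ collocation conditions $w(\tSE_i)-(\VolSEn w)(\tSE_i)=g(\tSE_i)$, $i=-N,\,\ldots,\,N$; unwinding $\VolSEn$ via~\eqref{eq:VolSEn} with $J(j,h)(ih)=h\delta^{(-1)}_{i-j}$, and using $\VolSEn[\omega_a]$, $\VolSEn[\omega_b]$ for the boundary basis functions, turns this into $(E_n^{\textRZ}-V_n^{\textRZ})\mathbd{c}_m=\mathbd{g}_n^{\textSE}$, i.e.\ exactly~\eqref{eq:RZ-linear-eq}. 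Conversely, if $\mathbd{c}_m$ solves~\eqref{eq:RZ-linear-eq} and $w=\vRZn$, then $w-\VolSEn w$ and $g$ agree at the collocation points, so by (b) $\ProjRZ(w-\VolSEn w)=\ProjRZ g$, and since $\ProjRZ w=w$ by (a) this reads $(\Ident-\ProjRZ\VolSEn)w=\ProjRZ g$; hence $w$ solves~\eqref{eq:RZ-Sinc-collocation-symbol}. Since $w\leftrightarrow\mathbd{c}_m$ is a bijection between $X_N$ and $\mathbb{R}^{m}$ intertwining the two equations, existence and uniqueness transfer both ways, giving $\mathrm{(A)}\Leftrightarrow\mathrm{(B)}$, and the construction exhibits the solution as $w=\vRZn$.

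The main obstacle is not conceptual but the coordinate bookkeeping in the middle two steps: checking that the evaluation map on $X_N$ is genuinely $E_n^{\textRZ}$ (so it is this matrix, not the identity, that appears on the left of~\eqref{eq:RZ-linear-eq}), that the discretized Volterra part matches $V_n^{\textRZ}$ column by column including the $\omega_a$, $\omega_b$ columns, and that $E_n^{\textRZ}$ is invertible. That last fact—supplied by the preceding Remark—is precisely what legitimizes treating $\ProjRZ$ as a true projection onto $X_N$, so it is the load-bearing ingredient of the whole argument.
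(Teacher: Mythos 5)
Your proposal is correct and follows exactly the route the paper intends: the paper omits this proof, citing the interpolation property $\ProjRZ[f](\tSE_i)=f(\tSE_i)$ and referring to Atkinson's standard collocation argument, which is precisely the coordinate-isomorphism argument you spell out (solutions of~\eqref{eq:RZ-Sinc-collocation-symbol} live in the range of $\ProjRZ$, the evaluation map on that range is $E_n^{\textRZ}$ with determinant $\pm\tanh(Nh/2)\neq 0$, and the collocation conditions collapse to~\eqref{eq:RZ-linear-eq}). No gaps; your filled-in details, including the identification of the $\omega_a$, $\omega_b$ columns with $\VolSEn[\omega_a]$, $\VolSEn[\omega_b]$, match the paper's matrix definitions.
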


From the above results (i) and (ii),
we find that $\vSEn = \ProjSE\uSEn$ and $\vRZn = \ProjRZ\uSEn$.
Using the interpolation property of $\ProjSE$ and $\ProjRZ$ as
\[
 \ProjSE[\uSEn](\tSE_i) = \uSEn(\tSE_i) = \ProjRZ[\uSEn](\tSE_i),
\quad i = -N,\,-N+1,\,\ldots,\,N,
\]
we have $\vSEn(\tSE_i)=\vRZn(\tSE_i)$.
However, we note that $\ProjSE$ and $\ProjRZ$ is not
generally equivalent. This can be observed
by the limits $t\to a$ and $t\to b$ as
\begin{align*}
 \lim_{t\to a}\ProjSE[f](t) = f(\tSE_{-N})
&\neq \beta_N =\lim_{t\to a}\ProjRZ[f](t),\\
 \lim_{t\to b}\ProjSE[f](t) = f(\tSE_{N})
&\neq \gamma_N =\lim_{t\to b}\ProjRZ[f](t).
\end{align*}
Thus, we obtain the claim of Theorem~\ref{thm:equivalence}.

\subsection{Proof of Theorem~\ref{thm:SE-Sinc-collocation}}
\label{sec:proof-SE-Sinc}

The invertibility of $(I_n - \kSEn)$
is already shown by Theorem~\ref{thm:SE-Sinc-Nystroem}.
Thus, we concentrate on the analysis of the error of $\vSEn$.
Because $\vSEn = \ProjSE \uSEn$, it holds that
\[
 u - \vSEn = u - \ProjSE\uSEn
= (u - \ProjSE u) + \ProjSE(u - \uSEn),
\]
which leads to
\begin{equation}
\label{eq:SE-Sinc-first-error}
 \|u - \vSEn\|_{C([a,b])}
\leq \|u - \ProjSE u\|_{C([a,b])}
 + \|\ProjSE\|_{\mathcal{L}(C([a,b]),C([a,b]))}
\|u - \uSEn\|_{C([a,b])}.
\end{equation}
For the first term, we show $u\in\MC_{\alpha}(\SEt(\domD_d))$,
from which we can use Theorem~\ref{thm:SE-Sinc-general}.
For the purpose, the following theorem is useful.

\begin{theorem}[Okayama et al.~{\cite[Theorem~3.2]{okayama13:_theo}}]
\label{thm:Sinc-Nyst-regularity}
Let $\domD=\SEt(\domD_d)$ or $\domD=\DEt(\domD_d)$.
Assume that $g$, $k(z,\cdot)$ and $k(\cdot,w)$ belong to $\Hinf(\domD)$
for all $z,\, w\in\domD$.
Then,~\eqref{eq:Volterra-int} has a unique solution $u\in\Hinf(\domD)$.
\end{theorem}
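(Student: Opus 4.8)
The plan is to pass to the strip $\domD_d$ via the transformation $\psi$ (standing for $\SEt$ or $\DEt$, according to the case) and there solve the Volterra equation by a Neumann series, exploiting that a Volterra operator is quasi-nilpotent. Since $\psi$ maps $\domD_d$ bijectively and holomorphically onto $\domD$, a function $u$ lies in $\Hinf(\domD)$ if and only if $U:=u\circ\psi$ lies in $\Hinf(\domD_d)$. Put $G(\zeta)=g(\psi(\zeta))$ and $K(\zeta,\xi)=k(\psi(\zeta),\psi(\xi))\psi'(\xi)$, and note that $\psi(\zeta)\to a$ as $\Re\zeta\to-\infty$. The substitution $s=\psi(\xi)$ then turns $\int_a^z k(z,s)u(s)\diff s$ — the analytic continuation of the real integral, which after the substitution is an integral over the horizontal ray ending at $\zeta=\psi^{-1}(z)$, whose improper end at $-\infty$ is harmless once the decay bound below is in hand, and whose value is path-independent by Cauchy's theorem on $\domD_d$ — into $(\mathcal{K}U)(\zeta):=\int_{-\infty}^{0}K(\zeta,\zeta+\tau)U(\zeta+\tau)\diff\tau$; the ray $\{\zeta+\tau:\tau\le0\}$ stays in the horizontal strip $\domD_d$. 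Hence $(\Ident-\Vol)u=g$ on $\domD$ is equivalent to $U=G+\mathcal{K}U$ on $\domD_d$.

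The crucial estimate I would establish is that there is an integrable $\kappa\colon\mathbb{R}\to[0,\infty)$ with $|K(\zeta,\xi)|\le\kappa(\Re\xi)$ for all $\zeta,\xi\in\domD_d$. This combines the boundedness of $k$ on $\domD\times\domD$ with the decay of $\psi'$ along horizontal lines: on $|\Im\xi|<\pi$ one has $|\SEtDiv(\xi)|\le c\,\rme^{-|\Re\xi|}$, while on $|\Im\xi|<\pi/2$ the factor $\operatorname{sech}^2(\tfrac{\pi}{2}\sinh\xi)$ appearing in $\DEtDiv$ decays double-exponentially in $|\Re\xi|$, overwhelming the $\cosh\xi$ factor; in both cases the bound is uniform in $\Im\xi$ over the closed sub-strip. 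This is precisely where the admissible range $d<\pi$ (resp.\ $d<\pi/2$) is used, since it also keeps $\psi'$ free of poles on $\domD_d$. Set $\Theta(r)=\int_{-\infty}^{r}\kappa(\sigma)\diff\sigma$ and $\Theta_\infty=\Theta(+\infty)<\infty$.

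With this in hand the remainder is routine. First, $\mathcal{K}$ maps $\Hinf(\domD_d)$ into itself: $|(\mathcal{K}U)(\zeta)|\le\|U\|_{\Hinf(\domD_d)}\Theta(\Re\zeta)\le\Theta_\infty\|U\|_{\Hinf(\domD_d)}$, and $\mathcal{K}U$ is analytic because for each $\tau\le0$ the integrand $K(\zeta,\zeta+\tau)U(\zeta+\tau)$ is holomorphic in $\zeta$ and is dominated, locally uniformly in $\zeta$, by the $L^{1}(-\infty,0)$ function $\tau\mapsto\|U\|_{\Hinf(\domD_d)}\kappa(\Re\zeta+\tau)$ (differentiation under the integral, or Morera's theorem). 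Then, by induction on $m$ using $\int_{-\infty}^{r}\kappa(\sigma)\Theta(\sigma)^{m-1}\diff\sigma=\Theta(r)^{m}/m$, one obtains $|(\mathcal{K}^{m}U)(\zeta)|\le\|U\|_{\Hinf(\domD_d)}\Theta(\Re\zeta)^{m}/m!$, so that $\|\mathcal{K}^{m}\|\le\Theta_\infty^{m}/m!$. Consequently the Neumann series $U:=\sum_{m=0}^{\infty}\mathcal{K}^{m}G$ converges absolutely in $\Hinf(\domD_d)$ and solves $U=G+\mathcal{K}U$; transporting back, $u:=U\circ\psi^{-1}$ lies in $\Hinf(\domD)$ and satisfies $(\Ident-\Vol)u=g$. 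Uniqueness is immediate: if $W\in\Hinf(\domD_d)$ satisfies $W=\mathcal{K}W$, then $\|W\|_{\Hinf(\domD_d)}=\|\mathcal{K}^{m}W\|_{\Hinf(\domD_d)}\le(\Theta_\infty^{m}/m!)\|W\|_{\Hinf(\domD_d)}\to0$, whence $W\equiv0$. I expect the only genuinely delicate step to be the decay/boundedness analysis of $\psi'$ on the strip — especially the double-exponential bound and the absence of poles for $\DEt$ when $d<\pi/2$, with uniformity as $\Im\xi\to\pm d$ — together with the minor point of upgrading the separate analyticity and boundedness of $k$ (Hartogs for the former) to joint analyticity and boundedness on $\domD\times\domD$.
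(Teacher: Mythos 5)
The paper you were given never proves this statement---it is imported verbatim from Okayama et al.\ \cite{okayama13:_theo}---so the comparison is against that source, whose argument is likewise a quasi-nilpotency/Neumann-series argument for the Volterra operator on $\Hinf(\domD)$. Your version, which pulls everything back to the strip $\domD_d$ and integrates along horizontal rays, is a legitimate execution of the same idea: the decay bounds for $\SEtDiv$ on $|\Im\xi|\leq d<\pi$ and for $\DEtDiv$ on $|\Im\xi|\leq d<\pi/2$ are correct and do yield an integrable majorant $\kappa(\Re\xi)$; the induction $\lvert(\mathcal{K}^m U)(\zeta)\rvert\le\|U\|\,\Theta(\Re\zeta)^m/m!$ is the classical Volterra estimate and is carried out correctly; analyticity of $\mathcal{K}U$ via Morera and path-independence on the simply connected strip are fine; and the uniqueness argument is standard. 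The pullback route buys you a clean, explicit majorant (the ray $\{\zeta+\tau:\tau\le 0\}$ never leaves the strip), whereas arguing directly on $\domD$ would require choosing paths from $a$ to $z$ inside the lens-shaped region and bounding their lengths; both work.

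The one step that does \emph{not} follow from the stated hypotheses is the one you dismiss as minor: joint boundedness of $k$ on $\domD\times\domD$. Separate analyticity upgrades to joint analyticity by Hartogs, but separate boundedness does \emph{not} upgrade to joint boundedness: $k(z,w)=1/(1-zw)$ on the bidisk has $k(z,\cdot)\in\Hinf(\mathbb{D})$ for every fixed $z$ and $k(\cdot,w)\in\Hinf(\mathbb{D})$ for every fixed $w$, yet is unbounded on $\mathbb{D}\times\mathbb{D}$. Your ``crucial estimate'' $|K(\zeta,\xi)|\le\kappa(\Re\xi)$ needs $\sup_{z,s\in\domD}|k(z,s)|<\infty$, and without it $\Vol$ need not even map $\Hinf(\domD)$ into itself, so the whole Neumann-series machinery stalls. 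To close the argument you should either add the (harmless in practice, and implicitly used by the cited source as well) hypothesis $\sup_{z\in\domD}\|k(z,\cdot)\|_{\Hinf(\domD)}<\infty$, or restrict the majorant to $\kappa(\Re\xi)=M(\zeta)\,|\psi'(\xi)|$ with a $\zeta$-dependent constant and track it through the iteration---which still requires some uniform control. As it stands, this is the single genuine gap; everything else in your proposal is sound.
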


Using this theorem, we can show the following result.

\begin{theorem}
\label{thm:Sinc-collocation-regularity}
Let $\alpha$ be a positive constant with $\alpha\leq 1$.
Assume that all the assumptions of Theorem~\ref{thm:Sinc-Nyst-regularity}
are fulfilled. Furthermore,
assume that $g$ and $k(\cdot, w)$ belong to $\MC_{\alpha}(\domD)$
for all $w\in\domD$.
Then,~\eqref{eq:Volterra-int} has a unique solution $u\in\MC_{\alpha}(\domD)$.
\end{theorem}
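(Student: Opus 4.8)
The plan is to observe first that Theorem~\ref{thm:Sinc-Nyst-regularity} already supplies the unique solution $u\in\Hinf(\domD)$, so the only thing left is to verify the two H\"{o}lder conditions at the endpoints; equivalently, it suffices to show that $\Vol$ maps $\Hinf(\domD)$ into $\MC_{\alpha}(\domD)$, since then $u=g+\Vol u$ lies in $\MC_{\alpha}(\domD)$ as the sum of $g\in\MC_{\alpha}(\domD)$ and $\Vol u\in\MC_{\alpha}(\domD)$ ($\MC_{\alpha}(\domD)$ being a vector space). Throughout I would use that, under the standing assumptions, $u$ is bounded on $\domD$, $k$ is bounded on $\domD\times\domD$, and $k(\cdot,w)$ has a H\"{o}lder constant at the endpoints that may be taken independent of $w$; I would also use the elementary geometric fact that near each of $a$ and $b$ the region $\domD$ is contained in and asymptotic to a sector with vertex at that endpoint, so that a point $z\in\domD$ sufficiently close to $a$ (resp.\ to $b$) can be joined to $a$ (resp.\ to $b$) by a rectifiable path lying in $\domD$ of length $\Order(|z-a|)$ (resp.\ $\Order(|b-z|)$), while any $z\in\domD$ can be joined to $a$ by a path in $\domD$ of bounded length. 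By analytic continuation on the connected set $\domD$, the identity $u(z)=g(z)+\int_a^z k(z,s)u(s)\diff s$ holds for every $z\in\domD$, the integral being along such a path.

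For the condition at $a$ I would exploit that $\int_a^a k(a,s)u(s)\diff s=0$ (so that in particular $u(a)=g(a)$): for $z\in\domD$ near $a$,
\[
 u(z)-u(a) = (g(z)-g(a)) + \int_a^z k(z,s)u(s)\diff s ,
\]
and integrating along a path of length $\Order(|z-a|)$ bounds the last term by $C|z-a|$. Combined with $|g(z)-g(a)|\leq L|z-a|^{\alpha}$ and with $|z-a|\leq|z-a|^{\alpha}$ (valid since $\alpha\leq 1$ once $|z-a|\leq 1$), this gives $|u(z)-u(a)|\leq C|z-a|^{\alpha}$ near $a$. For $z\in\domD$ bounded away from $a$ the inequality is immediate because $\domD$ is bounded, so $|z-a|^{\alpha}$ is bounded below while $|u(z)-u(a)|$ is bounded; patching the two ranges yields the H\"{o}lder bound at $a$ with one constant.

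For the condition at $b$ I would write $u(b)=g(b)+\int_a^b k(b,s)u(s)\diff s$ and split the path from $a$ to $b$ through a point $z\in\domD$ near $b$, obtaining
\[
 u(b)-u(z) = (g(b)-g(z)) + \int_z^b k(b,s)u(s)\diff s + \int_a^z (k(b,s)-k(z,s))u(s)\diff s .
\]
The first term is $\Order(|b-z|^{\alpha})$ because $g\in\MC_{\alpha}(\domD)$; the second is $\Order(|b-z|)=\Order(|b-z|^{\alpha})$ by the same path-length estimate; the third is bounded by $\|u\|_{\Hinf(\domD)}$ times a H\"{o}lder constant of $k(\cdot,s)$ at $b$ (uniform in $s$) times $|b-z|^{\alpha}$ times the bounded length of a path from $a$ to $z$, hence is again $\Order(|b-z|^{\alpha})$. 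Patching with the trivial bound for $z$ away from $b$ finishes the argument.

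The main obstacle I expect is the estimate at $b$, and specifically the difference term $k(b,s)-k(z,s)$: in contrast to the endpoint $a$, where the boundary contribution $\int_a^a$ vanishes outright, controlling this term forces one to use the regularity of $k$ in its \emph{first} argument at $b$, uniformly with respect to the second argument, together with the uniform bound of $k$ on $\domD\times\domD$; extracting these uniformities from the hypotheses, and being careful with the path integrals when $\domD$ is the Riemann surface $\DEt(\domD_d)$ and when paths run out to the endpoints $a,b\notin\domD$, is where the genuine work lies. The remaining ingredients---splitting into ``$z$ near the endpoint'' versus ``$z$ away from it'' and invoking $\alpha\leq 1$---are routine.
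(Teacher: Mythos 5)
Your proposal is correct and follows essentially the same route as the paper: existence and analyticity from Theorem~\ref{thm:Sinc-Nyst-regularity}, then the identical three-term decomposition of $u(b)-u(z)$ (Hölder bound on $g$, boundedness of $k$ and $u$ for the $\int_z^b$ piece with $|b-z|\le L|b-z|^{\alpha}$ via $\alpha\le 1$, and the Hölder continuity of $k(\cdot,w)$ at $b$ uniformly in $w$ for the difference term), with the endpoint $a$ handled analogously. Your explicit attention to path lengths in $\domD$ and on the Riemann surface $\DEt(\domD_d)$ is a point the paper's proof leaves implicit, but it does not change the argument.
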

\begin{proof}
According to Theorem~\ref{thm:Sinc-Nyst-regularity},~\eqref{eq:Volterra-int}
has a unique solution $u\in\Hinf(\domD)$.
Therefore, we only have to show the H\"{o}lder continuity
of $u$ at the endpoints.
Using $u = g + \Vol u$, we have
\begin{align*}
|u(b) - u(z)|
&=\left| \left(g(b) + \int_a^b k(b,w)u(w)\diff w\right)
- \left(g(z) + \int_a^z k(z,w)u(w)\diff w\right)\right|\\
&\leq \left|g(b) - g(z)\right|
+ \left|\int_z^b k(b,w)u(w)\diff w\right|
+ \left|\int_a^z \left\{k(b,w)- k(z,w)\right\}u(w)\diff w\right|.
\end{align*}
From the H\"{o}lder continuity of $g$,
the first term can be bounded by
$L_g|b - z|^{\alpha}$ for some constant $L_g$.
From the boundedness of $k$ and $u$,
the second term can be bounded by
$L_{k,u}|b - z|$ for some constant $L_{k,u}$.
Furthermore, from the boundedness of $\domD$ and $\alpha\leq 1$,
we have
$|b - z|=|b - z|^{1-\alpha}|b - z|^{\alpha}\leq L_{\domD}|b - z|^{\alpha}$
for some constant $L_{\domD}$.
From the H\"{o}lder continuity of $k$ and
boundedness of $u$,
the third term can be bounded by
$\tilde{L}_{k,u}|b - z|^{\alpha}|z - a|$ for some constant $\tilde{L}_{k,u}$.
Furthermore, from the boundedness of $\domD$,
we have $|z - a|\leq \tilde{L}_{\domD}$ for some constant $\tilde{L}_{\domD}$.
Thus, there exists a constant $L$ such that
$|u(b) - u(z)|\leq L|b - z|^{\alpha}$,
which shows the H\"{o}lder continuity of $u$ at $z = b$.
The proof for the H\"{o}lder continuity at $z = a$ is omitted
because it follows the same method as that at $z = b$.
This completes the proof.
\end{proof}

From this theorem, we can use Theorem~\ref{thm:SE-Sinc-general}
for estimating the first term of~\eqref{eq:SE-Sinc-first-error} as
\[
 \|u - \ProjSE u\|_{C([a,b])}
\leq C_1 \sqrt{N} \rme^{-\sqrt{\pi d \alpha N}}
\]
for some constant $C_1$.
For the second term, we use the following bound for
the operator $\ProjSE$.

\begin{lemma}[Okayama~{\cite[Lemma~7.2]{okayama23:_theo}}]
Let $\ProjSE$ be defined by~\eqref{eq:ProjSE}.
Then, there exists a constant $C_2$ independent of $N$ such that
\[
 \|\ProjSE\|_{\mathcal{L}(C([a,b]),C([a,b]))}
\leq C_2 \log(N+1).
\]
\end{lemma}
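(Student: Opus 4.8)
\noindent\emph{Sketch of a possible proof.}
The plan is to reduce the operator-norm bound to the classical Lebesgue constant of Sinc interpolation, with the contribution of the two affine endpoint weights $\omega_a,\omega_b$ absorbed by elementary estimates. First I would substitute the definition of $\tilde f^{\textSE}_N$ into~\eqref{eq:ProjSE} and regroup so that $(\ProjSE f)(t)$ is displayed as an explicit linear combination of the nodal values $f(\tSE_{-N}),\dots,f(\tSE_N)$:
\[
(\ProjSE f)(t)=\sum_{j=-N}^{N}f(\tSE_j)S(j,h)(\SEtInv(t))+f(\tSE_{-N})r_a(t)+f(\tSE_{N})r_b(t),
\]
where $r_a(t)=\omega_a(t)-\sum_{k=-N}^{N}\omega_a(\tSE_k)S(k,h)(\SEtInv(t))$ and $r_b$ is the analogous expression with $\omega_b$ in place of $\omega_a$. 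For each fixed $t\in[a,b]$ the number $(\ProjSE f)(t)$ is a linear functional of these $2N+1$ values only, so $\|\ProjSE\|_{\mathcal{L}(C([a,b]),C([a,b]))}$ is bounded by the supremum over $t\in[a,b]$ of the sum of the absolute values of the associated coefficient functions, namely $|S(j,h)(\SEtInv(t))|$ for $|j|<N$ together with $|S(-N,h)(\SEtInv(t))+r_a(t)|$ and $|S(N,h)(\SEtInv(t))+r_b(t)|$; only this ``$\le$'' direction of the Lebesgue-constant identity is needed.

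Next I would estimate that supremum. Applying the triangle inequality and absorbing $|S(\pm N,h)(\SEtInv(t))|$ and $|S(k,h)(\SEtInv(t))|$ into sums of the form $\sum_{j=-N}^{N}|S(j,h)(\SEtInv(t))|$, and then using $0\le\omega_a(\tSE_k),\omega_b(\tSE_k)\le 1$ and $\omega_a+\omega_b\equiv 1$ on $[a,b]$ (so that the two weighted sums combine into a single unweighted one and $|\omega_a(t)|+|\omega_b(t)|=1$), one finds that the sum of absolute coefficient values is at most $2\sum_{j=-N}^{N}|S(j,h)(\SEtInv(t))|+1$. Since $\SEtInv$ maps $(a,b)$ bijectively onto $\mathbb{R}$ (and at $t=a$ or $t=b$ the value of $\ProjSE f$ equals $f(\tSE_{-N})$ or $f(\tSE_{N})$, so the relevant sum is $\le 1$ there), it remains to control the Sinc Lebesgue constant
\[
\Lambda_N:=\sup_{x\in\mathbb{R}}\sum_{j=-N}^{N}|S(j,h)(x)|=\sup_{y\in\mathbb{R}}\sum_{j=-N}^{N}\Bigl|\frac{\sin\pi(y-j)}{\pi(y-j)}\Bigr|,
\]
which is independent of $h$. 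Here I would invoke the classical estimate $\Lambda_N=\Order(\log N)$: translating so the node nearest to $y$ becomes the origin, every term with $|y-j|\ge 1$ is dominated by $2/(\pi|y-j|)$, so the sum is at most $1$ plus a partial harmonic sum over at most $2N$ indices, and the case in which no node lies within distance $1/2$ of $y$ is treated identically and is even smaller. Combining, $\|\ProjSE\|_{\mathcal{L}(C([a,b]),C([a,b]))}\le 2\Lambda_N+1\le C_2\log(N+1)$ for a suitable $C_2$ independent of $N$, the additive constants being absorbed because $\log(N+1)\ge\log 2$ for $N\ge 1$.

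The only step that is not completely routine is the treatment of the two endpoint coefficient functions $S(-N,h)(\SEtInv(\cdot))+r_a$ and $S(N,h)(\SEtInv(\cdot))+r_b$: one must verify that replacing the two boundary Sinc basis functions by the affine weights $\omega_a,\omega_b$ does not spoil the logarithmic growth, and this is precisely where $\omega_a+\omega_b\equiv 1$ and $\omega_a(\tSE_k),\omega_b(\tSE_k)\in[0,1]$ enter. The remaining ingredient, the $\Order(\log N)$ bound on the Sinc Lebesgue constant $\Lambda_N$, is classical and may be taken as known.
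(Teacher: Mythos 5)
Your argument is correct and is essentially the standard proof: expand $\ProjSE f$ as a linear combination of the nodal values, bound the resulting Lebesgue function by $2\sum_{j=-N}^N|S(j,h)(\SEtInv(t))|+1$ using $\omega_a+\omega_b\equiv 1$ and $\omega_a(\tSE_k),\omega_b(\tSE_k)\in[0,1]$, and invoke the classical $\Order(\log N)$ bound on the Sinc Lebesgue constant. The paper itself only cites this lemma from an external reference, but the key ingredient you use is exactly the bound $\sum_{j=-N}^N|S(j,h)(x)|\le\tfrac{2}{\pi}(3+\log N)$ (Stenger, Problem~3.1.5(a)) that the paper employs in the neighbouring estimate of $\|\ProjSE-\ProjRZ\|$, so your route matches the intended one.
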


The remaining term to be estimated in~\eqref{eq:SE-Sinc-first-error}
is $\|u - \uSEn\|_{C([a,b])}$.
According to Lemma~\ref{lem:SE-Sinc-Nystroem},
it is estimated as~\eqref{eq:SE-Sinc-Nystroem-error}.
Because $u\in\Hinf(\SEt(\domD_d))$, $u$ satisfies the assumptions
of Theorem~\ref{thm:SE-Sinc-indefinite},
from which we have
\[
 \|\Vol u - \VolSEn u\|_{C([a,b])}
\leq C_3 \rme^{-\sqrt{\pi d \alpha N}}.
\]
Thus, there exists a constant $C_4$ such that
\begin{align*}
 \|u - \vSEn\|_{C([a,b])}
&\leq C_1 \sqrt{N} \rme^{-\sqrt{\pi d \alpha N}}
+ C_2 \log(N+1) C_3 \rme^{-\sqrt{\pi d \alpha N}}
\leq C_4 \sqrt{N}\rme^{-\sqrt{\pi d \alpha N}}.
\end{align*}
This completes the proof of Theorem~\ref{thm:SE-Sinc-collocation}.

\subsection{Proof of Theorem~\ref{thm:RZ-Sinc-collocation}}
\label{sec:proof-RZ-Sinc}

For Theorem~\ref{thm:RZ-Sinc-collocation},
the invertibility of $(E_n^{\textRZ} - V_{n}^{\textRZ})$
can be shown by combining
Lemmas~\ref{lem:SE-Sinc-Nystroem}, \ref{lem:RZ-solution}
and Proposition~\ref{prop:RZ-solution}.
Thus, we concentrate on the analysis of the error of $\vRZn$.
By the triangle inequality, we have
\begin{align*}
 \|u(t) - \vRZn(t)\|_{C([a,b])}
&\leq \|u(t) - \vSEn(t)\|_{C([a,b])}
 + \|\vSEn(t) - \vRZn(t)\|_{C([a,b])}\\
&= \|u(t) - \vSEn(t)\|_{C([a,b])}
 + \|\ProjSE\uSEn(t) - \ProjRZ\uSEn(t)\|_{C([a,b])}.
\end{align*}
Because the first term
% $\|u(t) - \vSEn(t)\|_{C([a,b])}$
is already estimated by Theorem~\ref{thm:SE-Sinc-collocation},
we estimate the second term.
For the purpose, the following lemma is essential.

\begin{lemma}
Let $\ProjSE: C([a,b])\to C([a,b])$ and
$\ProjRZ: C([a,b])\to C([a,b])$ be defined
by~\eqref{eq:ProjSE} and~\eqref{eq:ProjRZ}, respectively.
Then, there exists a constant $C$ independent of $N$ such that
\[
 \left\|\ProjSE - \ProjRZ\right\|_{\mathcal{L}(C([a,b]),C([a,b]))}
\leq \frac{C}{\rme^{Nh} - 1}\log(N+1).
\]
\end{lemma}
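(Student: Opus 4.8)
The plan is to compute $\ProjSE f-\ProjRZ f$ in closed form for an arbitrary $f\in C([a,b])$ and to exhibit it as a multiple of the single number $f(\tSE_{-N})-f(\tSE_N)$: the coefficient will be $\Order(1/(\rme^{Nh}-1))$ and the remaining, $f$-independent factor will be of size $\Order(\log(N+1))$. The underlying reason is that $\ProjSE$ and $\ProjRZ$ interpolate the same data $f(\tSE_{-N}),\ldots,f(\tSE_N)$ and differ only in the treatment of the two endpoints, the discrepancy being governed by the difference between the coefficients $f(\tSE_{\pm N})$ used in~\eqref{eq:ProjSE} and the coefficients $\beta_N,\gamma_N$ used in~\eqref{eq:ProjRZ}.

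First I would record the values of $\omega_a,\omega_b$ at the two endpoint nodes. From the explicit form~\eqref{eq:SEt} of the tanh transformation one obtains $\omega_a(\tSE_{-N})=\omega_b(\tSE_N)=\rme^{Nh}/(\rme^{Nh}+1)$ and $\omega_b(\tSE_{-N})=\omega_a(\tSE_N)=1/(\rme^{Nh}+1)$; in particular the common denominator of $\beta_N$ and $\gamma_N$ equals $\tanh(Nh/2)=(\rme^{Nh}-1)/(\rme^{Nh}+1)$. Substituting these values into the definitions of $\beta_N,\gamma_N$ and writing $\Delta_N:=f(\tSE_{-N})-f(\tSE_N)$, a short computation gives
\[
 \beta_N-f(\tSE_{-N})=\frac{\Delta_N}{\rme^{Nh}-1},\qquad
 \gamma_N-f(\tSE_N)=-\frac{\Delta_N}{\rme^{Nh}-1}.
\]
Moreover the two end summands of~\eqref{eq:ProjSE} that have no counterpart in~\eqref{eq:ProjRZ} have coefficients $\tilde{f}^{\textSE}_N(\tSE_{-N})=\Delta_N/(\rme^{Nh}+1)$ and $\tilde{f}^{\textSE}_N(\tSE_N)=-\Delta_N/(\rme^{Nh}+1)$.

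Next I would subtract~\eqref{eq:ProjRZ} from~\eqref{eq:ProjSE} and collect the coefficients of $\omega_a$, of $\omega_b$, and of each $S(j,h)(\SEtInv(\cdot))$, using the identities above together with $\omega_a+\omega_b\equiv1$. After the cancellations this yields, for every $t\in[a,b]$,
\[
 (\ProjSE f-\ProjRZ f)(t)
 =\frac{\Delta_N}{\rme^{Nh}-1}\,R_N(t)
 +\frac{\Delta_N}{\rme^{Nh}+1}\bigl(S(-N,h)(\SEtInv(t))-S(N,h)(\SEtInv(t))\bigr),
\]
where $R_N(t)=(\omega_b(t)-\omega_a(t))+\sum_{j=-N+1}^{N-1}(\omega_a(\tSE_j)-\omega_b(\tSE_j))S(j,h)(\SEtInv(t))$ is independent of $f$. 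I expect this algebraic bookkeeping to be the main obstacle: one must keep track simultaneously of the two end Sinc terms of $\ProjSE$, of the prefactors of $\omega_a,\omega_b$, and of the replacement of $f(\tSE_{\pm N})$ by $\beta_N,\gamma_N$, and verify that everything recombines so that the difference carries the exponentially small factor $\Delta_N/(\rme^{Nh}\mp1)$.

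The remaining estimates are routine. We have $|\Delta_N|\le2\|f\|_{C([a,b])}$, $|S(j,h)|\le1$ on $\mathbb{R}$, $|\omega_a(\tSE_j)-\omega_b(\tSE_j)|\le1$, and $|\omega_b(t)-\omega_a(t)|\le1$ on $[a,b]$. The classical logarithmic bound for the Lebesgue constant of truncated Sinc interpolation, $\sup_{x\in\mathbb{R}}\sum_{j=-N}^N|S(j,h)(x)|\le C_0\log(N+1)$ — the same ingredient behind the cited estimate for $\|\ProjSE\|_{\mathcal{L}(C([a,b]),C([a,b]))}$ — then gives $\|R_N\|_{C([a,b])}\le1+C_0\log(N+1)$. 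Combining this with $\|S(-N,h)(\SEtInv(\cdot))-S(N,h)(\SEtInv(\cdot))\|_{C([a,b])}\le2$ and $1/(\rme^{Nh}+1)\le1/(\rme^{Nh}-1)$, and absorbing the additive constant by means of $\log(N+1)\ge\log2>0$, I obtain
\[
 \|\ProjSE f-\ProjRZ f\|_{C([a,b])}\le\frac{C}{\rme^{Nh}-1}\log(N+1)\,\|f\|_{C([a,b])}
\]
with a constant $C$ independent of $N$ and of $f$. Dividing by $\|f\|_{C([a,b])}$ and taking the supremum over $f\neq0$ completes the proof.
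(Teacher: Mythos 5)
Your proposal is correct and follows essentially the same route as the paper: both reduce the difference $\ProjSE f-\ProjRZ f$ to the endpoint-coefficient discrepancies $f(\tSE_{\pm N})-\beta_N,\gamma_N$, which equal $\pm\bigl(f(\tSE_{-N})-f(\tSE_N)\bigr)/(\rme^{Nh}-1)$, and then bound the remaining $f$-independent factor by the truncated Sinc Lebesgue constant $\Order(\log(N+1))$ together with $\omega_a+\omega_b\equiv 1$. Your version merely makes explicit the node values $\omega_a(\tSE_{\pm N}),\omega_b(\tSE_{\pm N})$ and keeps the two end summands of $\ProjSE$ separate, which the paper absorbs into the full sum from $-N$ to $N$.
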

\begin{proof}
First, it holds for $f\in C([a,b])$ that
\begin{align*}
 \ProjSE[f](t) - \ProjRZ[f](t)
&= - \sum_{j=-N}^N\left\{
\left(f(\tSE_{-N})-\beta_N\right)\omega_a(\tSE_j) +
\left(f(\tSE_{N})-\gamma_N\right)\omega_b(\tSE_j)
\right\}S(j,h)(\SEtInv(t))\\
&\quad+\left(f(\tSE_{-N}) - \beta_N\right)\omega_a(t)
+\left(f(\tSE_N) - \gamma_N\right)\omega_b(t).
\end{align*}
Here, noting
\begin{align*}
 |f(\tSE_{-N}) - \beta_N|
&=\frac{|f(\tSE_N) - f(\tSE_{-N})|}{\rme^{Nh} - 1}
\leq \frac{2\|f\|_{C([a,b])}}{\rme^{Nh} - 1},\\
 |f(\tSE_{N}) - \gamma_N|
&=\frac{|f(\tSE_{-N}) - f(\tSE_{N})|}{\rme^{Nh} - 1}
\leq \frac{2\|f\|_{C([a,b])}}{\rme^{Nh} - 1},
\end{align*}
and using $\omega_a(t) + \omega_b(t) = 1$, we have
\begin{align*}
 \left\|\ProjSE - \ProjRZ\right\|_{\mathcal{L}(C([a,b]),C([a,b]))}
&\leq \frac{2}{\rme^{Nh} - 1}
\left\{\sum_{j=-N}^N\left(\omega_a(\tSE_j) + \omega_b(\tSE_j)\right) |S(j,h)(\SEtInv(t))|
+\omega_a(t) + \omega_b(t)
\right\}\\
&= \frac{2}{\rme^{Nh} - 1}
\left\{\sum_{j=-N}^N |S(j,h)(\SEtInv(t))| + 1\right\}\\
&\leq \frac{2}{\rme^{Nh} - 1}
\left\{\frac{2}{\pi}(3 + \log N) + 1\right\},
\end{align*}
where the standard bound~\cite[Problem 3.1.5 (a)]{stenger93:_numer}
is used for the last inequality.
Thus, the claim follows.
\end{proof}

From this lemma, substituting~\eqref{eq:h-SE} into $h$,
we estimate the second term as
\[
 \|(\ProjSE - \ProjRZ) \uSEn\|_{C([a,b])}
\leq \frac{C}{1 - \rme^{-\sqrt{\pi d / \alpha}}}
\log(N+1)\rme^{-\sqrt{\pi d N/\alpha}}\left\|\uSEn\right\|_{C([a,b])}.
\]
Noting $\alpha\in (0, 1]$, we obtain
$\rme^{-\sqrt{\pi d N/\alpha}}\leq \rme^{-\sqrt{\pi d \alpha N}}$.
Furthermore, $\log(N+1)\leq \sqrt{N}$ holds.
Hence, the proof is completed if
$\left\|\uSEn\right\|_{C([a,b])}$ is uniformly bounded
with respect to $N$.
This is shown by the following estimate
\[
 \left\|\uSEn\right\|_{C([a,b])}
\leq \left\|u - \uSEn\right\|_{C([a,b])}
+\left\| u\right\|_{C([a,b])}.
\]
From~\eqref{eq:SE-Sinc-Nystroem-error}
and Theorem~\ref{thm:SE-Sinc-indefinite},
we see that $\left\|u - \uSEn\right\|_{C([a,b])}$
converges to $0$ as $N\to\infty$,
and accordingly it is uniformly bounded.
We also see that $\left\| u\right\|_{C([a,b])}$
is bounded because $u$ is continuous on $[a, b]$
from the assumption (see Theorem~\ref{thm:Sinc-collocation-regularity}).
This completes the proof of Theorem~\ref{thm:RZ-Sinc-collocation}.

\section{Proofs for the theorem presented in Section~\ref{sec:de-collocation}}
\label{sec:proof-DE}

In this section, we provide proofs for Theorem~\ref{thm:DE-Sinc-collocation}.

\subsection{Existence and uniqueness of the approximated equations}

In addition to the given equation $(\Ident - \Vol)u = g$,
let us consider the following two equations:
\begin{align}
 (\Ident - \VolDEn)    \uDEn &= g, \label{eq:DE-Sinc-Nystroem-symbol} \\
 (\Ident - \ProjDE \VolDEn)v &= \ProjDE g,
 \label{eq:DE-Sinc-collocation-symbol}
% (E_m^{\textRZ} - V_{m}^{\textRZ})\mathbd{c}_m &= \mathbd{g}_m^{\textDE},
%\label{eq:RZ-linear-eq-in-proof}
\end{align}
where $\VolDEn$ is defined by
\[
%\label{eq:VolDEn}
 \VolDEn [f](t)
=\sum_{j=-N}^N k(t,\tDE_j)f(\tDE_j)\DEtDiv(jh)J(j,h)(\DEtInv(t)),
\]
and $\ProjDE$ are defined by~\eqref{eq:ProjDE}.
Because~\eqref{eq:DE-Sinc-Nystroem-symbol} is equivalent
to~\eqref{eq:DE-Sinc-Nystroem},
the solution of~\eqref{eq:DE-Sinc-Nystroem-symbol}
is the approximate solution of the DE-Sinc-Nystr\"{o}m method.
On~\eqref{eq:DE-Sinc-Nystroem-symbol}, the following result was obtained.

\begin{lemma}[Okayama et al.~{\cite[Lemma~6.10]{okayama13:_theo}}]
\label{lem:DE-Sinc-Nystroem}
Assume that
all the assumptions of Theorem~\ref{thm:DE-Sinc-Nystroem}
are fulfilled.
Then, there exists a positive integer $N_0$ such that for all $N\geq N_0$,
\eqref{eq:DE-Sinc-Nystroem-symbol}
has a unique solution $\uDEn\in C([a,b])$.
Furthermore, there exists a constant $C$ independent of $N$ such that
for all $N\geq N_0$,
\begin{equation}
\label{eq:DE-Sinc-Nystroem-error}
 \|u - \uDEn\|_{C([a,b])}\leq C \|\Vol u - \VolDEn u\|_{C([a,b])}.
\end{equation}
\end{lemma}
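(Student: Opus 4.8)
The plan is to prove this lemma exactly as its SE-counterpart, Lemma~\ref{lem:SE-Sinc-Nystroem}, is proved, i.e.\ via the theory of collectively compact operator approximations, with the DE-Sinc indefinite integration playing the role of its SE analogue. Both assertions---unique solvability of~\eqref{eq:DE-Sinc-Nystroem-symbol} for all large $N$, and the perturbation bound~\eqref{eq:DE-Sinc-Nystroem-error}---will follow once one knows that $\Ident-\VolDEn$ is boundedly invertible on $C([a,b])$ uniformly in $N$.

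First I would record that $\Ident-\Vol$ is itself boundedly invertible on $C([a,b])$: a Volterra operator with continuous kernel is quasi-nilpotent, so the Neumann series $\sum_{m\ge0}\Vol^m$ converges in operator norm and represents $(\Ident-\Vol)^{-1}$; in particular~\eqref{eq:Volterra-int} has the unique solution $u=(\Ident-\Vol)^{-1}g\in C([a,b])$, consistently with Theorem~\ref{thm:Sinc-Nyst-regularity}. Next I would verify the two ingredients the collectively compact machinery needs. (a)~\emph{Uniform boundedness and collective compactness of $\{\VolDEn\}_N$.} Since $|J(j,h)(\xi)|\le h\bigl(\tfrac12+\tfrac1\pi\sup_x|\Si(x)|\bigr)$ and $\DEtDiv$ decays double-exponentially, the sum $h\sum_j|\DEtDiv(jh)|$ stays bounded (it is essentially a Riemann sum for $\int_{\mathbb R}\DEtDiv(x)\diff x=b-a$), whence
\[
 \sup_N\ \sup_{\xi\in\mathbb R}\ \sum_{j=-N}^{N}\bigl|\DEtDiv(jh)\,J(j,h)(\xi)\bigr|<\infty ;
\]
together with the boundedness of $k$ this gives $\sup_N\|\VolDEn\|_{\mathcal L(C([a,b]))}<\infty$, and an accompanying equicontinuity estimate (the maps $t\mapsto k(t,\tDE_j)$ and $t\mapsto J(j,h)(\DEtInv(t))$ vary smoothly and the summability above controls the series termwise) shows that $\{\VolDEn f:\ N\in\mathbb N,\ \|f\|_{C([a,b])}\le1\}$ is relatively compact. (b)~\emph{Pointwise convergence $\VolDEn f\to\Vol f$ in $C([a,b])$.} For $f$ bounded and analytic on $\DEt(\domD_d)$ this is Theorem~\ref{thm:DE-Sinc-indefinite} applied (with $\alpha=1$, so that~\eqref{eq:QC} reduces to boundedness) to the integrand $s\mapsto k(t,s)f(s)$, uniformly in $t\in[a,b]$ because the relevant constant is controlled by $\sup_{[a,b]\times\DEt(\domD_d)}|k|\cdot\|f\|$; the general $f\in C([a,b])$ is then reached by density of polynomials (which lie in $\Hinf(\DEt(\domD_d))$ since that domain is bounded) combined with the uniform bound from (a).

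With (a) and (b) established, Anselone's theorem on collectively compact approximations applies: since $(\Ident-\Vol)^{-1}$ exists, there is $N_0$ such that $(\Ident-\VolDEn)^{-1}$ exists for all $N\ge N_0$ and $C:=\sup_{N\ge N_0}\|(\Ident-\VolDEn)^{-1}\|_{\mathcal L(C([a,b]))}<\infty$, giving unique solvability of~\eqref{eq:DE-Sinc-Nystroem-symbol}. Subtracting $(\Ident-\VolDEn)\uDEn=g$ from $(\Ident-\Vol)u=g$ and rearranging yields $(\Ident-\VolDEn)(u-\uDEn)=(\Vol-\VolDEn)u$, hence $\|u-\uDEn\|_{C([a,b])}\le C\|\Vol u-\VolDEn u\|_{C([a,b])}$, which is~\eqref{eq:DE-Sinc-Nystroem-error}. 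I expect the main obstacle to be part~(a): proving the uniform summability bound for the weights $\DEtDiv(jh)\,J(j,h)(\xi)$ and the attendant equicontinuity, since the double-exponential transformation makes $\DEtDiv$ collapse extremely fast in the tails while the sine-integral weights $J(j,h)$ vary only slowly, and these two behaviours must be balanced uniformly in $N$ (and uniformly for $t$ near the endpoints $a,b$).
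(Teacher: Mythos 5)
This lemma is imported verbatim from Okayama et al.\ \cite[Lemma~6.10]{okayama13:_theo}, so the paper offers no proof of its own; your reconstruction via collectively compact operator approximation---uniform boundedness and collective compactness of $\{\VolDEn\}$, pointwise convergence $\VolDEn f\to\Vol f$ extended from analytic $f$ to all of $C([a,b])$ by density plus the uniform bound, Anselone's theorem to get $\sup_{N\ge N_0}\|(\Ident-\VolDEn)^{-1}\|<\infty$, and the identity $(\Ident-\VolDEn)(u-\uDEn)=(\Vol-\VolDEn)u$---is precisely the argument used in that reference. The only step you leave genuinely unfinished is the one you flag yourself, namely the uniform equicontinuity of $\{\VolDEn f:\ \|f\|_{C([a,b])}\le 1\}$ near the endpoints (where $t\mapsto J(j,h)(\DEtInv(t))$ has unbounded derivative), and that is indeed where the cited proof does its real technical work.
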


On~\eqref{eq:DE-Sinc-collocation-symbol}, we can show the following lemma
in the same manner as Lemma~\ref{lem:Stenger-solution}
(hence, the proof is omitted).

\begin{lemma}
The following two statements are equivalent:
%\begin{quote}
\begin{enumerate}
%\small\itshape
 \item[{\rm (A)}] Equation~\eqref{eq:DE-Sinc-Nystroem-symbol}
has a unique solution $\uDEn\in C([a,b])$.
 \item[{\rm (B)}] Equation~\eqref{eq:DE-Sinc-collocation-symbol} has
 a unique solution $v\in C([a,b])$.
\end{enumerate}
%\end{quote}
Furthermore, $v=\vDEn$ holds.
\end{lemma}

On the basis of the results,
we proceed to analyze the error of $\vDEn$ next.

\subsection{Proof of Theorem~\ref{thm:DE-Sinc-collocation}}

In the same manner as~\eqref{eq:SE-Sinc-first-error},
we have
\begin{equation}
\label{eq:DE-Sinc-first-error}
 \|u - \vDEn\|_{C([a,b])}
\leq \|u - \ProjDE u\|_{C([a,b])}
 + \|\ProjDE\|_{\mathcal{L}(C([a,b]),C([a,b]))}
\|u - \uDEn\|_{C([a,b])}.
\end{equation}
For the first term,
from Theorem~\ref{thm:Sinc-collocation-regularity},
we can use Theorem~\ref{thm:DE-Sinc-general} as
\[
 \|u - \ProjDE u\|_{C([a,b])}
\leq C_1 \rme^{-\pi d N/\log(2 d N/\alpha)}
\]
for some constant $C_1$.
For the second term, we use the following bound for
the operator $\ProjDE$.

\begin{lemma}[Okayama~{\cite[Lemma~7.5]{okayama23:_theo}}]
Let $\ProjDE$ be defined by~\eqref{eq:ProjDE}.
Then, there exists a constant $C_2$ independent of $N$ such that
\[
 \|\ProjDE\|_{\mathcal{L}(C([a,b]),C([a,b]))}
\leq C_2 \log(N+1).
\]
\end{lemma}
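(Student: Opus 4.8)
The plan is to bound $(\ProjDE f)(t)$ pointwise on $[a,b]$ and take the supremum, using the explicit form~\eqref{eq:ProjDE}. Split the right-hand side into the endpoint-correction part $f(\tDE_{-N})\omega_a(t)+f(\tDE_N)\omega_b(t)$ and the interpolation sum $\sum_{j=-N}^{N}\tilde f^{\textDE}_N(\tDE_j)\,S(j,h)(\DEtInv(t))$. Since $\omega_a$ and $\omega_b$ are affine, nonnegative on $[a,b]$, and satisfy $\omega_a(t)+\omega_b(t)=1$, and since $\tDE_{\pm N}\in(a,b)$ gives $|f(\tDE_{\pm N})|\le\|f\|_{C([a,b])}$, the endpoint part is bounded by $\|f\|_{C([a,b])}$ uniformly in $t$.

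For the interpolation sum I would first control the nodal values of $\tilde f^{\textDE}_N$: because $\tDE_j\in(a,b)$ and $0\le\omega_a(\tDE_j),\omega_b(\tDE_j)\le1$ with $\omega_a(\tDE_j)+\omega_b(\tDE_j)=1$,
\[
 |\tilde f^{\textDE}_N(\tDE_j)|
 = \bigl|f(\tDE_j)-f(\tDE_{-N})\omega_a(\tDE_j)-f(\tDE_N)\omega_b(\tDE_j)\bigr|
 \le 2\|f\|_{C([a,b])}
\]
uniformly in $j$. Then apply the classical Lebesgue-constant estimate for Sinc interpolation, $\sum_{j=-N}^{N}|S(j,h)(x)|\le\tfrac{2}{\pi}(3+\log N)$ for every $x\in\mathbb{R}$ (the same bound~\cite[Problem 3.1.5 (a)]{stenger93:_numer} already used in Section~\ref{sec:proof-RZ-Sinc}), at $x=\DEtInv(t)$; this is independent of $t$ and of which transformation is used, so the comparative steepness of $\DEt$ plays no role. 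Combining the two parts yields
\[
 |(\ProjDE f)(t)|\le\Bigl(1+\tfrac{4}{\pi}(3+\log N)\Bigr)\|f\|_{C([a,b])}\le C_2\log(N+1)\,\|f\|_{C([a,b])}
\]
for a suitable $C_2$ independent of $N$, and taking the supremum over $t\in[a,b]$ gives the asserted operator-norm bound.

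I do not expect a substantive obstacle: the argument is structurally identical to the one for $\ProjSE$ quoted just above, and the logarithmic growth is forced entirely by the Sinc Lebesgue constant. The only point needing a line of care is uniformity over the \emph{closed} interval — at $t=a$ and $t=b$ the map $\DEtInv$ is undefined, but there every $S(j,h)(\DEtInv(t))\to0$ while $(\omega_a,\omega_b)$ equals $(1,0)$ or $(0,1)$, so $(\ProjDE f)(a)=f(\tDE_{-N})$ and $(\ProjDE f)(b)=f(\tDE_N)$, which are bounded by $\|f\|_{C([a,b])}$; hence the estimate extends continuously to the endpoints, and the use of $\log(N+1)$ rather than $\log N$ absorbs the small-$N$ case.
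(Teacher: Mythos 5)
Your proof is correct and uses exactly the technique the paper relies on elsewhere (the lemma is only cited here, but the same Sinc Lebesgue-constant bound $\sum_{j=-N}^{N}|S(j,h)(x)|\leq\tfrac{2}{\pi}(3+\log N)$ is the engine of the analogous estimate in Section~\ref{sec:proof-RZ-Sinc}): split off the affine endpoint part, bound the nodal values of $\tilde f^{\textDE}_N$ by $2\|f\|_{C([a,b])}$, and invoke the Lebesgue constant, which is independent of the choice of transformation. The remark about continuity at $t=a,b$ is a nice touch; no gaps.
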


The remaining term to be estimated in~\eqref{eq:DE-Sinc-first-error}
is $\|u - \uDEn\|_{C([a,b])}$.
According to Lemma~\ref{lem:DE-Sinc-Nystroem},
it is estimated as~\eqref{eq:DE-Sinc-Nystroem-error}.
Because $u\in\Hinf(\DEt(\domD_d))$, $u$ satisfies the assumptions
of Theorem~\ref{thm:DE-Sinc-indefinite},
from which we have
\[
 \|\Vol u - \VolDEn u\|_{C([a,b])}
\leq C_3 \frac{\log(2 d N/\alpha)}{N}\rme^{-\pi d N/\log(2 d N/\alpha)}.
\]
Thus, there exists a constant $C_4$ such that
\begin{align*}
 \|u - \vDEn\|_{C([a,b])}
&\leq C_1 \rme^{-\pi d N/\log(2 d N/\alpha)}
+ C_2 \log(N+1) C_3 \frac{\log(2 d N/\alpha)}{N}
\rme^{-\pi d N/\log(2 d N/\alpha)}\\
&\leq C_4 \rme^{-\pi d N/\log(2 d N/\alpha)}.
\end{align*}
This completes the proof of Theorem~\ref{thm:DE-Sinc-collocation}.

%\section*{Statements and Declarations}
%
%\begin{itemize}
% \item \textbf{Competing Interests:}
% The authors have no competing interests to declare
% that are relevant to the content of this article.
% \item \textbf{Funding:}
% This work was partially supported by the
% JSPS Grant-in-Aid for Scientific Research (C)
% Number JP23K03218.
% \item \textbf{Data and code availability:}
%The data that support the findings of this study are available
%in the GitHub repository,
%\url{https://github.com/okayamat/sinc-colloc-volterra}.
%\end{itemize}

\bibliography{SincCollocationVolterra2nd}% common bib file

\end{document}